\documentclass[12pt]{amsart}

\title{Free oriented extensions of subfactor planar algebras}

\author[ 
S\lowercase{hamindra} G\lowercase{hosh},
C\lowercase{orey} J\lowercase{ones},
M\lowercase{adhav} R\lowercase{eddy}
]
{\bf  
	S\lowercase{hamindra} K\lowercase{umar} G\lowercase{hosh},
	C\lowercase{orey} J\lowercase{ones and}
    B M\lowercase{adhav} R\lowercase{eddy}
}
\date{}
\address{Stat-Math Unit, Indian Statistical Institute, Kolkata, INDIA}
\email{shami@isical.ac.in}
\address{Australian National University, Mathematical Sciences Institute, Canberra, AUSTRALIA}
\email{cormjones88@gmail.com}
\address{Stat-Math Unit, Indian Statistical Institute, Kolkata, INDIA}
\email{madhav0903@gmail.com}

\usepackage{bbm}
\usepackage{mathtools}
\usepackage{cancel}
\usepackage{amsmath}
\usepackage{amsfonts}
\usepackage{latexsym}
\usepackage{amssymb}
\usepackage{mathrsfs}
\usepackage{amscd}
\usepackage{hyperref}
\usepackage{soul}
\usepackage{psfrag}
\usepackage{graphicx}
\usepackage{ulem}
\usepackage{fullpage}
\usepackage[all]{xy}
\usepackage{rotating}
\usepackage{url}
\usepackage{color}
\usepackage{enumerate}
\usepackage{cleveref}
\usepackage{dsfont}

\numberwithin{equation}{section}
\numberwithin{figure}{section}
\theoremstyle{plain}
\newtheorem{thm}{Theorem}[section]
\theoremstyle{plain}
\newtheorem{lem}[thm]{Lemma}
\theoremstyle{remark}
\newtheorem{rem}[thm]{Remark}
\theoremstyle{plain}

\theoremstyle{definition}
\newtheorem{defn}[thm]{Definition}
\theoremstyle{definition}

\theoremstyle{definition}

\theoremstyle{plain}
\newtheorem{prop}[thm]{Proposition}
\theoremstyle{plain}
\newtheorem{fact}[thm]{Fact}

 \newtheorem{problem}[thm]{Problem}
\theoremstyle{definition}
\newcommand{\comments}[1]{}
\newcommand{\ra}{\rightarrow}
\newcommand{\rab}{\rangle}
\newcommand{\lra}{\longrightarrow}

\newcommand{\lab}{\langle}

\newcommand{\mcal}{\mathcal}

\newcommand{\N}{\mathbb N}
\newcommand{\Z}{\mathbb Z}
\newcommand{\C}{\mathbb{C}}
\newcommand{\R}{\mathbb{R}}
\newcommand{\F}{\mathbb{F}}
\newcommand{\mscr}{\mathscr}
\newcommand{\vlon}{\varepsilon}

\newcommand{\oset}{\overset}
\newcommand{\oline}{\overline}
\newcommand{\vphi}{\varphi}

\newcommand{\Irr}{\text{Irr}}


\setlength{\parskip}{0.5ex plus 0.1ex minus -0.2ex}


\keywords{}

\begin{document}
\global\long\def\vlon{\varepsilon}
\global\long\def\bt{\bowtie}
\global\long\def\ul#1{\underline{#1}}
\global\long\def\ol#1{\overline{#1}}
\global\long\def\norm#1{\left\|{#1}\right\|}
\global\long\def\os#1#2{\overset{#1}{#2}}
\global\long\def\us#1#2{\underset{#1}{#2}}
\global\long\def\ous#1#2#3{\overset{#1}{\underset{#3}{#2}}}
\global\long\def\t#1{\text{#1}}
\global\long\def\lrsuf#1#2#3{\vphantom{#2}_{#1}^{\vphantom{#3}}#2^{#3}}
\global\long\def\tr{\triangleright}
\global\long\def\tl{\triangleleft}
\global\long\def\cc90#1{\begin{sideways}#1\end{sideways}}
\global\long\def\turnne#1{\begin{turn}{45}{#1}\end{turn}}
\global\long\def\turnnw#1{\begin{turn}{135}{#1}\end{turn}}
\global\long\def\turnse#1{\begin{turn}{-45}{#1}\end{turn}}
\global\long\def\turnsw#1{\begin{turn}{-135}{#1}\end{turn}}
\global\long\def\fusion#1#2#3{#1 \os{\textstyle{#2}}{\otimes} #3}

\global\long\def\abs#1{\left|{#1}\right	|}
\global\long\def\red#1{\textcolor{red}{#1}}

\maketitle

\begin{abstract} We show that the restriction functor from oriented factor planar algebras to subfactor planar algebras admits a left adjoint, which we call the free oriented extension functor. We show that for any subfactor planar algebra realized as the standard invariant of a hyperfinite $\rm{II}_1$ subfactor, the projection category of the free oriented extension admits a realization as bimodules of the hyperfinite $\rm{II}_{1}$ factor.

\end{abstract}

\section{Introduction}

Associated to a finite index subfactor $N\subseteq M$ is its \textit{standard invariant}. This can be axiomatized by Ocneanu's paragroups in the finite depth case \cite{O}, and in general by Popa's $\lambda$-lattices \cite{P} or Jones' subfactor planar algebras \cite{J}. The planar algebra approach to standard invariants has become important in classification programs for subfactors \cite{JMS,BJ, AMP}, and has provided useful tools for constructions of exotic fusion categories \cite{BMPS}.

If $N$ and $M$ are both hyperfinte, we can choose an isomorphism $\varphi:N\rightarrow M$ and consider $\mcal H_{\varphi}:={}_NL^2(M)_{\varphi(N)}$ as an $N$-$N$ bimodule, with left action given by the inclusion as usual, but the right action uses the isomorphism $\vphi$ and the right action by $M$.
The bi-category constructed from alternating powers of $\mcal H_{\varphi}$ and $\overline{\mcal H}_{\varphi}$ recovers the standard invariant of the subfactor (see \cite{G,P1}). However, the whole tensor category generated by $\mcal H_{\varphi}$ and $\overline{\mcal H}_{\varphi}$ contains more information than just the subfactor standard invariant. This information is captured by an \textit{oriented} planar algebra $\mathcal{P}_{+}$, whose alternating part is isomorphic to the subfactor planar algebra $\mathcal{P}_{N\subseteq M}$ associated to the original inclusion $N\subseteq M$. We call such a planar algebra an \textit{oriented extension} of $\mathcal{P}_{N\subseteq M}$.

In this note, we introduce a universal such extension called the \textit{free oriented extension} of a subfactor planar algebra, and show that it enjoys a universal property (Theorem \ref{free con any}).
Namely, for any oriented extension there exists a canonical planar subalgebra isomorphic to the free oriented extension.
We can view the free oriented extension as a functor $\mcal{F}$ from the category of subfactor planar algebras and planar algebra homomorphisms to the category of oriented C*-planar algebras and planar algebra homomorphisms. We have a forgetful functor $\mcal{S}$ from the category of oriented C*-planar algebras and planar algebra morphisms to the category of subfactor planar algebras by restricting to the shaded part. 
Thus, we have the following picture.
\[
\left\{\begin{array}{c}
\t{Oriented}\\
\t{C*-planar algebras}
\end{array}\right\}
\begin{array}{c}
\oset {\mcal S}\longrightarrow \\
\oset {\mcal F}\longleftarrow
\end{array}
\left\{\begin{array}{c}
\t{Subfactor}\\
\t{planar algebras}
\end{array}\right\}\]

Further, the universal property implies (infact, by \cite[Chap IV, Theorem 2]{M}, is equivalent to saying) that $\mcal{F}$ is a left adjoint of the functor $\mcal{S}$. 
This also agrees with the philosophy that forgetful functors typically admit left adjoints which are `free'.

It was suggested to us by V.F.R. Jones that the free oriented extension should be related to free products of categories. We show that this is true. Namely, if $Q$ is \textit{any} oriented extension, then the \textit{free} oriented extension is realized inside the free product of the projection category of $Q$ with the category of $\mathbb{Z}$-graded finite dimensional Hilbert spaces (Theorem \ref{orientedrealizationivertible}). We use this result, combined with a result of Vaes, to show the free oriented extension of any hyperfinite $\rm{II}_1$ subfactor planar algebra is realized in the category of bimodules of the hyperfinite $\rm{II}_1$ factor.

\noindent\textbf{Acknowledgement.} The authors would like to thank the Hausdorff Research Institute for Mathematics(HIM), Bonn, where a part of this work was completed during the trimester program on von Neumann algebras (during May-Aug, 2016).

\section{Preliminaries}\label{prelim}
The notion of planar algebra and most of the basic results in the general theory are due to Jones \cite{J}. While many of the results focus on subfactor planar algebras, the arguments apply much more generally. See \cite{J1} for a more general perspective. For explanations closely related to the explanations we give below, see \cite{BHP}. We will give an overview in the interest of being self contained.

\subsection{Oriented planar algebras}\label{sec opa}
Let $\Lambda$ be a non-empty set.
Define another disjoint copy of the set $\Lambda$ via $\ol{ \Lambda } := \left\{ \oline{\lambda} : \lambda \in \Lambda \right\}$.
Consider the free semigroup $W_\Lambda$ (or simply $ W $) of words with letters in $\Lambda \sqcup \ol {\Lambda}$.
There is an obvious involution $ \Lambda\sqcup \ol {\Lambda} \ni \iota  \os - \longmapsto \ol \iota  \in \Lambda\sqcup \ol {\Lambda}$ by defining $ \ol {\ol \lambda} := \lambda$.
This gives rise to the involution $ W \ni w = (\iota _1, \ldots ,\iota _n) \os \ast \longmapsto  {w^*}:=(\oline{\iota }_n, \ldots, \oline{\iota }_1) \in W$.

\begin{defn}{}\label{tan diag} A \textit{$\Lambda$-oriented planar tangle diagram} consists of 
\begin{itemize}
\item a subset $D_0$ of $ \R^2 $ homeomorphic\footnotemark  to the closed unit disc (this is referred to as the `external disc'), and finitely many, mutually non-intersecting subsets, $D_1, \ldots , D_n$ , of $ \t{int} (D_0) $, each of which is homeomorphic\footnotemark[\value{footnote}] to the closed unit disc (referred as `internal discs'),
\item each disc has finitely many marked points on its boundary dividing it into finitely many segments,
\item each disc has a distinguished boundary segment denoted by $\bigstar$,
\item finitely many oriented paths in $D_0\setminus \bigcup _{i=1}^n \t{int} (D_i)$ (referred as `strings') each of which is either a closed loop or has end points at two distinct marked points,
\item the strings exhaust all the marked points (as end points) and are labelled by elements of $\Lambda$ (and not $ \Lambda \sqcup \ol \Lambda $).
\end{itemize}\end{defn}
\footnotetext{If we use diffeomorphisms instead of homeomorphisms (which had been usually done in the literature), the techinical complications in defining the tangle, planar isotopies or even composition of tangles outweigh the actual purpose of introducing tangles.
Hence, we contend ourselves with homeomorphisms.}
\begin{figure}[h]
\psfrag{a}{$ \bigstar $}
\psfrag{b}{$ \bigstar $}
\psfrag{c}{$ \bigstar $}
\psfrag{d}{$ \bigstar $}
\psfrag{e}{\hspace{-0.3em}$ D_1 $}
\psfrag{f}{\hspace{-0.4em}$ D_3 $}
\psfrag{g}{\hspace{-0.4em}$ D_2 $}
\psfrag{h}{\hspace{-0.7em}$ D_0 $}
\psfrag{i}{\hspace{-0.6em}$ \iota _1 $}
\psfrag{j}{\raisebox{-0.3em}{$ \iota _2 $}}
\psfrag{k}{\hspace{-0.4em}$ \iota _3 $}
\psfrag{l}{\hspace{-0.3em}$ \iota _4 $}
\psfrag{m}{\hspace{-0.6em}$ \iota _5 $}
\psfrag{n}{\raisebox{-0.25em}{$ \iota _6 $}}
\psfrag{o}{\hspace{0.7em}$ \iota _8 $}
\psfrag{p}{\raisebox{0.2em}{$ \iota _7 $}}
\psfrag{q}{\raisebox{0.3em}{$ \iota _9 $}}
\includegraphics[scale=0.4]{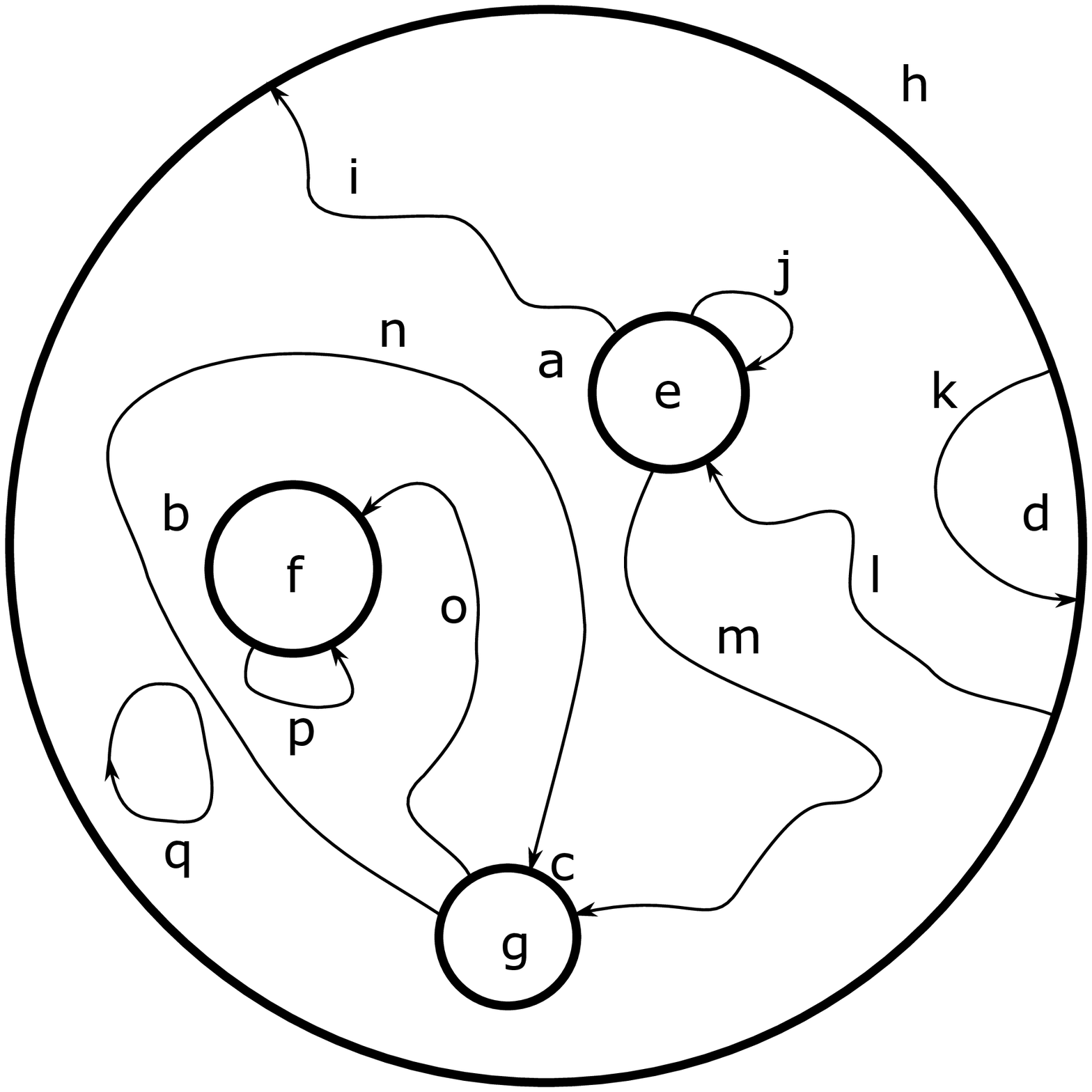}
\end{figure}

Two planar tangle diagrams $ T_1 , T_2 $ are said to be \textit{planar isotopic} if there exists a continuous map $ \vphi : [0,1] \times \R^2  \ra \R^2$ such that $ \vphi_0 = \t{id}_{\R^2} $, $ \vphi_t  $ is a homeomorphism for all $ t \in [0,1] $ and $ \vphi_1 (T_1) = T_2 $ preserving the labeling and the orientation of each string. The planar isotopy class of a $ \Lambda $-oriented planar tangle diagram is called \textit{$ \Lambda $-oriented planar tangle}.

Given such a tangle diagram, to each marked point on the internal disc, there is a string labeled by $\lambda$ which meets this point. To this point we will assign $\lambda$ if the string is oriented away from the internal disc and $ \ol \lambda $ is the string is oriented into the internal disc. For each marked point on the external disc, we have the opposite convention: if the string meeting this point is labeled $\lambda$, then we assign the label $\lambda$ if the string is oriented towards the exterior of the external disc and $\ol \lambda$ is it is oriented towards the interior.

Once this is done, each disc (internal and external) has a unique word in $\Lambda \sqcup\bar{\Lambda}$ attached to it by reading off the letters assigned to marked points starting from $\bigstar$ and moving clockwise along the boundary of the disc.
We call this unique word the \textit{color} of the corresponding disc.
The colors of $D_0$, $D_1$, $D_2$, and $D_3$ in the above tangle diagram are $w_0 = (\iota _3,\bar{\iota }_4,\iota _1,\bar{\iota }_3)$, $w_1 = (\iota _1,\iota _2,\bar{\iota }_2,\bar{\iota }_4,\iota _5)$, $w_2 = (\bar{\iota }_5,\iota _6,\iota _8, \bar{\iota }_6)$ and $w_3 = (\bar{\iota }_8,\bar{\iota }_7,\iota _7)$ respectively.
Note that planar isotopy does not affect the colors of the disc.
If tangle $ T $ has internal discs with colors  $ v_1,v_2,\ldots v_n $ and external disc with color $v_{0}$, then we denote this situation by $ T: (v_1,v_2,\ldots v_n) \rightarrow v_0 $; the set of all such tangles will be denoted by $ \mscr T_{(v_1,\ldots ,v_n);v_0} $. 
For example, in the tangle shown above we have $T:(w_1,w_2,w_3 )\ra w_0$.
If there is no internal disc in $ T $, then it is denoted by $T: \varnothing \ra v_0$; the set of all such tangles will be referred as $ \mscr T_{\varnothing;v_0} $.
Further, $ \mscr T_{v_0} = \mscr T^\Lambda_{v_0} $ will denote the set of all tangles each of whose external disc has color $ v_0 $.
Composition of two $ \Lambda $-oriented tangles is defined in exactly in the same way as in Jones' shaded planar tangles. (See \cite{J}).

In order to draw a picture of an oriented tangle, it will be convenient to represent a collection of parallel strings in any portion of the tangle by a single oriented string labelled by the word in $ W $, constructed from the letters labelling the individual strings along with their orientations.
For example, if $ w = (\iota _1, \ol \iota _2 , \ol \iota _3) $ where $ \iota _1, \iota _2, \iota _3 \in \Lambda $, then \;
\psfrag{a}{\hspace{-0.3em}\raisebox{-0.2em}{$ w $}}
\raisebox{-1.45em}{\includegraphics[scale=0.2]{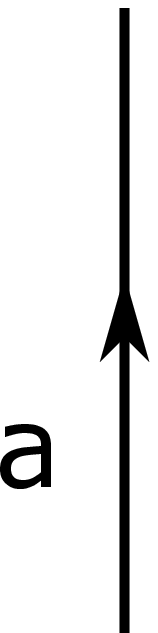}}
\; will represent
\psfrag{a}{\hspace{-0.6em}$ \iota _1 $}
\psfrag{b}{\hspace{-0.7em}$ \iota _2 $}
\psfrag{c}{\hspace{-0.7em}$ \iota _3 $}
\hspace{0.5em}\raisebox{-1.45em}{\includegraphics[scale=0.2]{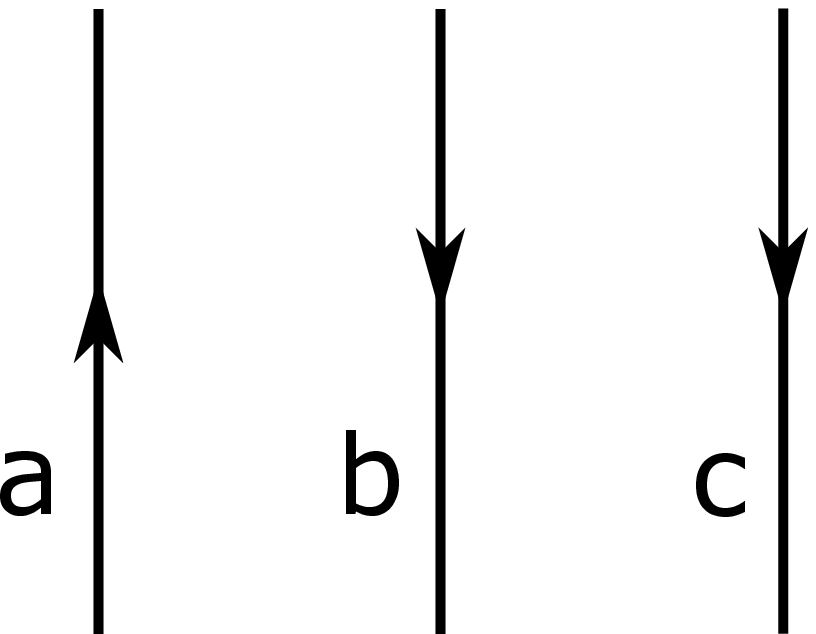}}.
With this convention, note that \;
\psfrag{a}{\hspace{-0.3em}\raisebox{-0.2em}{$ w $}}
\raisebox{-1.45em}{\includegraphics[scale=0.2]{figures/prelim/1.eps}}
\;$ = $\;
\psfrag{a}{\hspace{-0.47em}\raisebox{-0.22em}{$w^* $}}
\raisebox{-1.45em}{\includegraphics[scale=0.2]{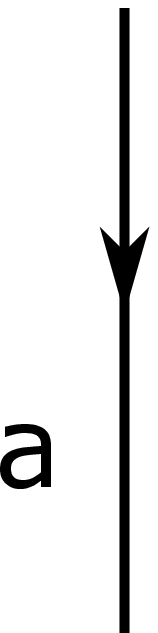}}.
\comments{If $ w $ is a word with letters $ \iota _i \in \Lambda\sqcup \bar\Lambda $, by a string labelled by $ w $ we mean $ \abs w $- many parallel strings each of which is labelled by $ \iota _i $, $ i=1,2,\ldots n $, that is, 
$\; \psfrag{a}{\hspace{-0.3em}\raisebox{-0.2em}{$ w $}}
\raisebox{-1.45em}{\includegraphics[scale=0.2]{figures/prelim/1.eps}}= 
\psfrag{a}{\hspace{-0.6em}$ \iota _1 $}
\psfrag{b}{\hspace{-0.7em}$ \iota _2 $}
\psfrag{c}{\hspace{-0.7em}$ \iota _n $}
\hspace{0.5em}\raisebox{-1.45em}{\includegraphics[scale=0.2]{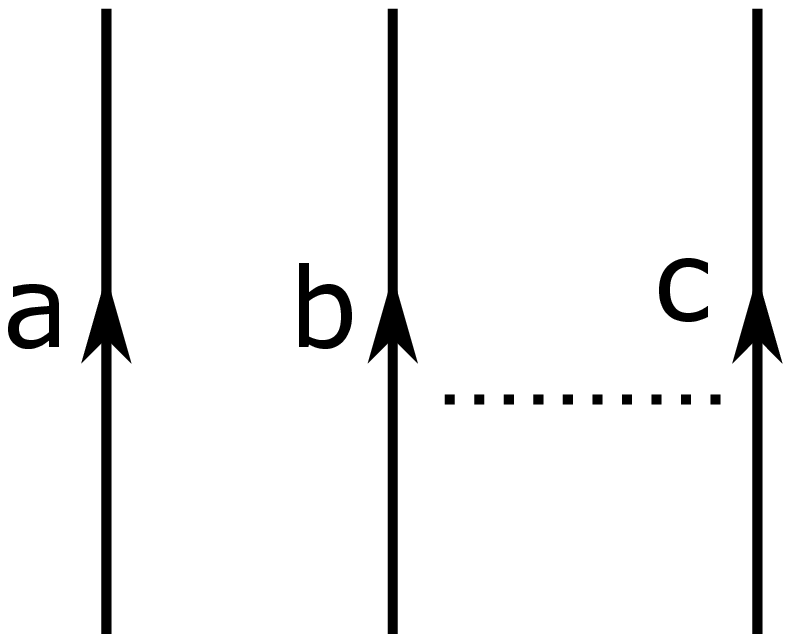}}
$
}

Before we proceed to more definitions, we describe some useful tangles.
Here we will often replace discs by rectangles, so there is a natural ``source" and ``target" associated to these diagrams.

Let $ w,w_1,w_2 \in W$.
\begin{itemize}
\item \textit{Identity tangle:} \[I_w :=\psfrag{a}{$ w $}
	\psfrag{c}{$ \bigstar $}
	\psfrag{d}{$ \bigstar $}
	\psfrag{e}{\hspace{-0.3em}\raisebox{-0.1em}{$ D_1 $}}
	\raisebox{-1.4em}{\includegraphics[scale=0.22]{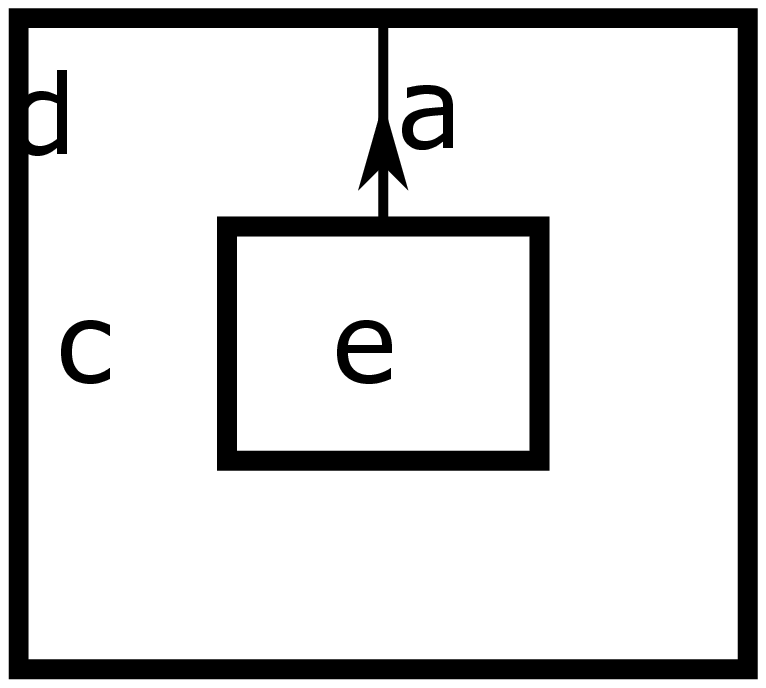}}: w \rightarrow w \] 

\item \textit{Unit tangle:} \[1_w\coloneqq\psfrag{a}{$ w $}
	\psfrag{b}{$ \bigstar $}
	\raisebox{-1.4em}{\includegraphics[scale=0.22]{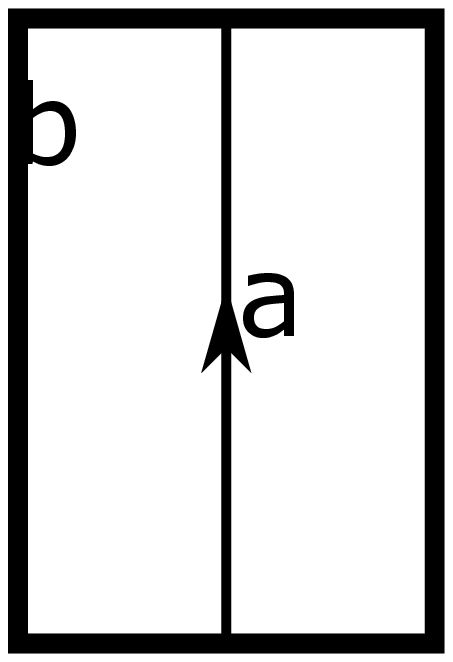}}: \varnothing \rightarrow w{w^*}\]

\item \textit{Multiplication tangle:} \[M^w_{w_1,w_2} :=
	\psfrag{a}{\hspace{-0.13em}\raisebox{-0.1em}{$ \bigstar $}}
	\psfrag{b}{$ w_1 $}
	\psfrag{c}{$ w $}
	\psfrag{d}{\raisebox{-0.1em}{$ w_2 $}}
	\psfrag{e}{\hspace{-0.5em}\raisebox{-0.2em}{$ \bigstar$}}
	\psfrag{f}{\hspace{-0.5em}\raisebox{0em}{$ \bigstar $}}
	\psfrag{g}{\hspace{-0.4em}\raisebox{-0.1em}{$ D_1 $}}
	\psfrag{h}{\hspace{-0.4em}\raisebox{-0.12em}{$ D_2 $}}
	\raisebox{-2.5em}{\includegraphics[scale=0.22]{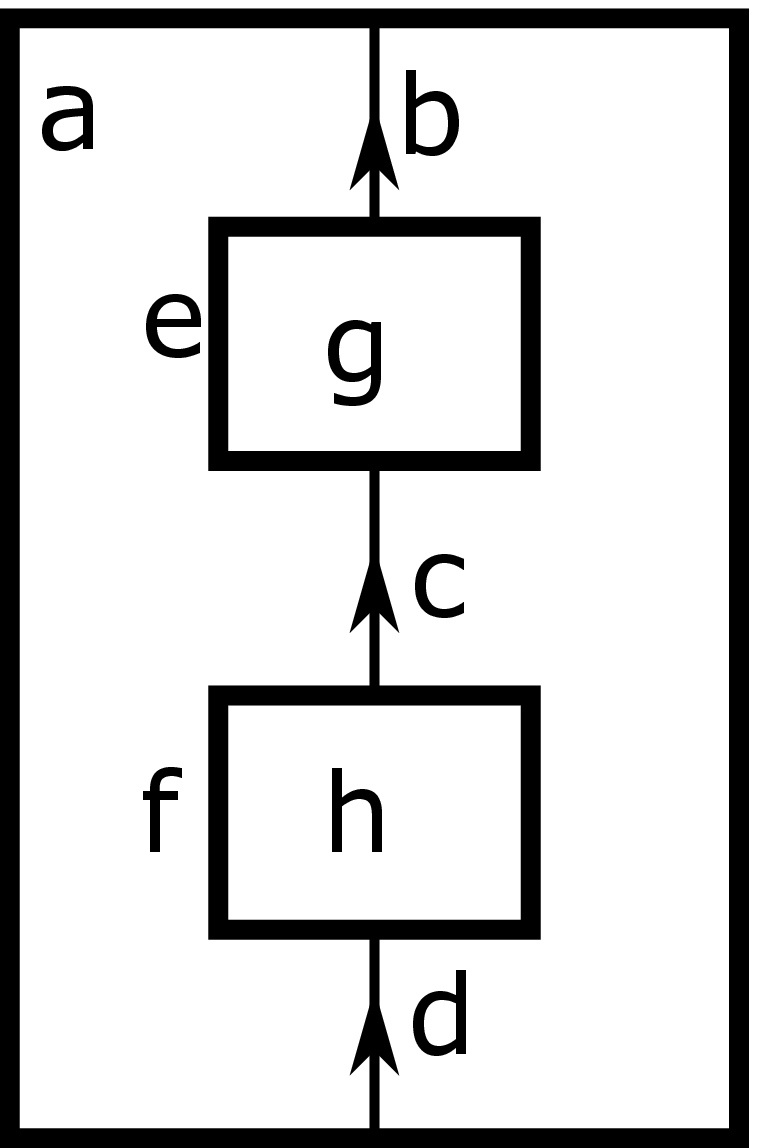}}
	: (w_1 w^*\:,\: w  w^*_2) \rightarrow  w_2w^*_1 \]

\item \textit{Inner product tangle:} \[H_w :=\psfrag{a}{\hspace{-0.2em}\raisebox{-0.1em}{$ \bigstar $}}
	\psfrag{c}{$ w $}
	\psfrag{e}{\hspace{-0.5em}\raisebox{-0.2em}{$ \bigstar $}}
	\psfrag{f}{\hspace{-0.5em}\raisebox{-0.1em}{$ \bigstar $}}
	\psfrag{g}{\hspace{-0.35em}\raisebox{-0.1em}{$ D_2 $}}
	\psfrag{h}{\hspace{-0.35em}\raisebox{-0.1em}{$ D_1 $}}
	\raisebox{-2.5em}{\includegraphics[scale=0.22]{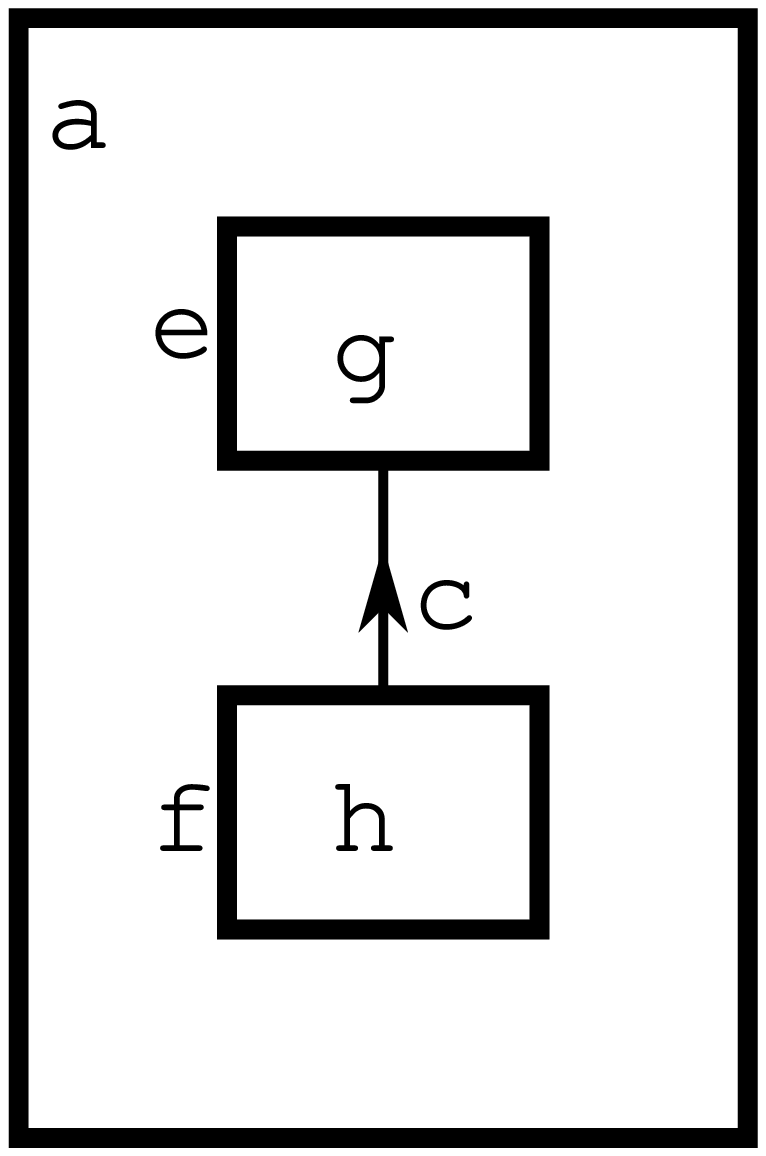}}
: (w,{w^*}) \rightarrow \varnothing\]

\item \textit{Rotation tangle:} \[\rho_{w_1,w_2}:=
\psfrag{a}{\hspace{-0.2em}\raisebox{-0.1em}{$ D_1 $}}
\psfrag{b}{\hspace{-0.5em}\raisebox{0.1em}{$ \bigstar$}}
\psfrag{c}{\raisebox{0em}{$ w_2 $}}
\psfrag{d}{\raisebox{-0.3em}{$ \bigstar $}}
\psfrag{e}{\raisebox{0.1em}{$ w_1 $}}
\raisebox{-1.7em}{\includegraphics[scale=0.25]{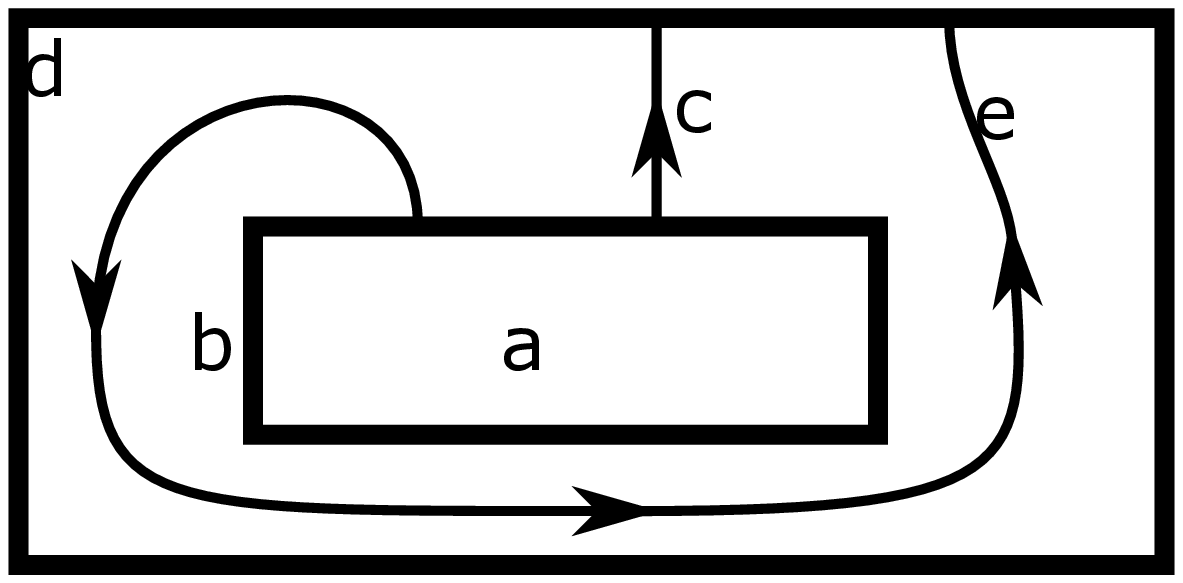}}: (w_1w_2)\rightarrow (w_2w_1)\]
\end{itemize}

When $w=w_1=w_2$, we denote the multiplication tangle simply by $M_w$.

\begin{defn}{} \label{opa}
A \textit{$\Lambda$-oriented planar algebra} $P$ consists of collection of complex vector spaces $\{P_w\}_{w \in W}$ and for every tangle $T : (w_1,\ldots , w_n) \rightarrow w_0$, we have a multi-linear map $P_T$ : $P_{w_1}\times \cdots\times P_{w_n} \rightarrow P_{w_0}$ satisfying the following conditions:

\begin{enumerate}
\item For tangles $ S:(w_1,\ldots,w_n) \ra w_0 $ and $ T:(u_{1},\ldots,u_m)\rightarrow w_{j} $ we have
\[ P_{S \us{D_j} \circ T} = P_S \circ ( \t{id}_{P_{w_1}} \times \cdots \times \t{id}_{P_{w_{j-1}}} \times P_T  \times \t{id}_{P_{w_{j+1}}} \times \cdots \times \t{id}_{P_{w_{n}}} )\]

\medskip

\item $P_{I_w}=\t {id}_{P_w}$ for all words $w$

\medskip

\item For $ T:(w_1,\ldots,w_n) \ra w_0 $, any collection of $ x_j \in P_{w_j}$ for $ 1\leq j \leq n $, and $ \sigma \in S_n $, we have 
$$ P_{\sigma(T)}  (x_{\sigma^{-1} (1)} , \ldots , x_{\sigma^{-1} (n)} ) = P_T (x_1 ,\ldots, x_n),$$ 

\noindent where the tangle $ \sigma(T):(w_{\sigma^{-1}(1)}, \ldots ,w_{\sigma^{-1}(n)}) \ra w_0 $ is obtained by renaming the $ j $-th internal disc $ D_j $ in $ T $ as $ D_{\sigma(j)} $ in $ \sigma (T) $ for $ 1\leq j \leq n $.
\end{enumerate}
\end{defn}

In the above definition, we adopt the convention that $T:\varnothing\rightarrow w$ should give a map $P_{T}: \C\rightarrow P_{w}$. This is consistent with the convention that the tensor power of vectors spaces over the empty set is the scalar field. Note that $\varnothing$ is a valid word, so there is a distinction between a tangle  $T:(\varnothing)\rightarrow w$ and $T:\varnothing\rightarrow w$. The first has an internal disc with no strings attached to it, while the second has no internal disc at all.

As in Jones' planar algebras, the multiplication and unit tangles equip $\{P_{v{v^*}}\}_{v \in W}$ unital associative algebras. We often write $ x\cdot y $ (or simply $ xy $) for $ P_{M_{v_1,v_2}^v}(x,y) $ whenever $ x$ and $y  $ are in appropriate spaces to make sense of the action.

There is also a $ * $-structure on the space of tangles.
For $T : (w_1, \ldots,w_n) \rightarrow w_0$ the tangle $T^*: (w^*_1, \ldots, w^*_n) \rightarrow w^*_0$ is obtained by reflecting $T$ along any straight line where the numbering of the internal discs (if any) and $ \Lambda $-labels of the strings are induced by the reflection from $ T $ whereas the orientation of each string is reversed after reflection.
Clearly $ *  $ is an involution.
\begin{defn}{} \label{pa types}
A $\Lambda$-oriented planar algebra $P$ is said to be 
\begin{enumerate}
\item \textit{connected} if $\t{dim} (P_{\varnothing}) = 1$.
\item \textit{finite dimensional} if $\t{dim} (P_w) < \infty$ for every $w \in W$.
\item a \textit{$*$-planar algebra} if there exists conjugate linear involutions $\left\{*_w : P_w \rightarrow P_{{w^*}}\right\}_{ w \in W }$ such that $\left[P_T(x_1,\ldots,x_n)\right]^* = P_{T^*}(x_1^*,  \ldots, x_n^*)$.
\item \textit{positive}, if $ P $ is a connected $*$-planar algebra and the sesquilinear form $P_{H_w} \circ (\t {id}_{P_w} \times \ast_w)$ is positive definite (and thereby gives an inner product on $ P_w $) for every $w \in W$.
\item \textit{spherical} if $ P $ is connected and actions of any two spherically isotopic tangles are identical.
\end{enumerate}
\end{defn}
In this article, we mainly focus on $ \Lambda $-oriented planar algebra which are connected,  finite-dimensional, positive and spherical; we will refer these as \textit{$\Lambda$- oriented factor planar algebra}. Note that this is more flexible than the definition given in \cite{BHP}, where they assume $ \lambda =\bar{\lambda} $ for every $ \lambda \in\Lambda $.

The reason we call ours factor planar algebras as well is as follows: a $\rm{II}_{1}$ factor and a collection of bifinite bimodules $\Lambda$, one can construct a $\Lambda$-oriented planar algebra as described in the next section. By the results of \cite{BHP}, every $\Lambda$-oriented planar algebra can be realized this way for some $\rm{II}_{1}$ factor. 

We now proceed to discuss morphisms of planar algebras.

\begin{defn}
Let $ P $ (resp., $ P' $) be a $ \Lambda $- (resp., $ \Lambda' $-) oriented planar algebra and $\varphi: \Lambda\rightarrow \Lambda' $ be any map.
If $ W_\Lambda$ and $W_{\Lambda'}  $ denote the free semigroups of words with letters in $ \Lambda\sqcup\ol{\Lambda} $ and $\Lambda'\sqcup\ol{\Lambda'} $ respectively, then $ \vphi $ extends to a homomorphism $ \vphi: W_\Lambda \ra W_{\Lambda'} $ by setting $ \varphi(\ol\lambda) := \ol{\varphi(\lambda)}$ for $ \lambda \in \Lambda $.
If $ T $ is a $ \Lambda $-oriented $ w $-tangle, then replacing labels assigned to strings by its corresponding image under $ \varphi $, we get a unique $ \Lambda' $-oriented tangle $ \varphi(T) $.
Thus $ \varphi $ can also be seen as a map from all $ \Lambda $-oriented tangles to $ \Lambda' $-oriented tangles which preserves composition and identity.
A \textit{homomorphism} $ \varphi:P\rightarrow P' $ consists of a map $ \varphi:\Lambda\rightarrow \Lambda' $ along with a collection of linear maps $ \varphi_w :P_w\rightarrow P'_{\varphi(w)}$ for each $ w \in W_\Lambda $ such that the action of oriented tangles is preserved i.e., for every $ \Lambda $-oriented tangle $ T :(w_1,\ldots,w_n)\rightarrow w$, we have $ \varphi_w(P_T(x_1,\ldots,x_n))=P'_{\varphi(T)}(\varphi_{w_1}(x_1),\ldots, \varphi_{w_n}(x_n)) $ for every $ x_i \in P^{\Lambda}_{w_i} $, $ i=1,2,\ldots,n$.
It is said to be an \textit{isomorphism} if all the maps $ \varphi, \varphi_w, w\in W $ are bijections.

If $ P$ and $ P' $ are $ * $-planar algebras, then $ \vphi $ is called a \textit{$ * $-homomorphism} if each $ \varphi_w $ preserves the $ * $-structure.
\end{defn}
\begin{rem}
Any $ * $-homomorphism between two oriented factor planar algebras will be automatically injective (cf. \cite{J}). 
\end{rem}
\comments{
\clearpage

\noindent \hrulefill

\noindent \hrulefill Corey's edits followed by Madhav's: \hrulefill

Before proceeding further, we exhibit the connection between semi-simple, rigid C*-tensor categories and oriented factor planar algebras.
We will just highlight the main points of this correspondence which is a kind of a folklore in the C*-tensor category, quantum group and subfactor communities.
We setup the following definition which will be useful throughout this paper and recall the \textit{unitary Karoubi envelope}.
\begin{defn}
Let $ \mcal C $ be a rigid semi-simple C*-tensor category and $\mscr X = \{X_\iota \}_\iota  $ be a family of objects in $ \mcal C $. The full subcategory of $ \mcal C $ obtained from all possible direct sums of simple objects which appear as a sub-object of a finite tensor-fold of elements from the family and their duals, will be referred as the \textit{full subcategory tensor-generated by $ \mscr X $} and denoted by $\lab \mscr X \rab $.
When this subcategory is the whole of $ \mcal C $, we simply say \textit{$ \mscr X $ tensor-generates $ \mcal C $}.
\end{defn}

Throughout this article, we use monoidal C*-functors between C*-tensor categories where the tensor preserving properties of the functors are implemented by natural unitaries; we refer such functors as \textit{unitary tensor functors}.

Now let $ \mcal C $ be a C*-tensor category with duals which is not necessarily semisimple.
By the \textit{unitary direct sum completion of $ \mcal C $} we mean a category  $\mathcal{C}^{\oplus}$ with objects as formal unitary direct sums of objects of $ \mcal C $.
Morphisms between formal direct sums are matrices of morphisms between the corresponding objects.
The axioms of a C*-category guarantee this category is again C*.
If $\mathcal{C}$ is a C*-tensor category, then $\mathcal{C}^{\oplus}$ also inherits this structure by extending the tensor product additively.
Furthermore, any unitary tensor functor $\mathcal{C}$ to $\mathcal{D}$ canonically extends to unitary tensor functor from $\mathcal{C}^{\oplus}$ to $\mathcal{D}$

The \textit{unitary idempotent completion} of a C*-tensor category $ \mcal C $, denoted by $\t {proj} (\mcal C )$, has objects as pairs $ (X , p) $ with $ X \in \t {Obj} (\mcal C) $ and $ p \in \mcal C(X,X) $ a projection (i.e. $p^{2}=p^{*}=p$).
The morphism space from $ (X,p) $ to $ (Y,q) $ consists of those $ f \in \mcal C(X,Y) $ satisfying $q \circ f \circ p = f $.
The *-structure and tensor structure on $\t{proj}(\mcal{C})$ are simply induced from $\mcal{C}$.
Thereby, $ \t {proj} (\mcal C) $ becomes a C*-tensor category, and furthermore any unitary tensor functor from $\mathcal{C}$ to $\mathcal{D}$ extends to a unitary tensor functor from $\t{proj} (\mathcal{C})$ to $\t{proj} (\mathcal{D})$.

\begin{defn}\label{karoubi} The \textit{unitary Karoubi envelope} is defined as $\mathcal{K}(\mathcal{C}):=\t{proj}(\mathcal{C}^{\oplus})$.
\end{defn}

The canonical C*-tensor functor sending $ X \longmapsto (X , 1_X)\in \mathcal{K}(\mathcal{C}) $ is denoted by $ \iota^{\mcal C}: \mcal C \ra \mcal K(\mcal C )$.
Note that $ \iota^{\mcal C} (X) \otimes \iota^{\mcal C} (Y) = \iota^{\mcal C} (X \otimes Y)$, and in fact, $ \iota^{\mcal C} $ has an obvious monoidal structure. We record some basic facts about unitary Karoubi completions:


\begin{fact}\label{projcat1}
	\begin{enumerate}
		\item If $ \mcal C$ is a strict C*-tensor category, then so is $\mathcal{K}(\mathcal{C})$, and $ \iota^{\mcal C} $ is monoidal.
		
		\item If $ \mcal C $ is a semi-simple C*-category, then  $\iota^{\mcal C}: \mcal C \ra \mcal K (\mcal C) $ is an equivalence.
		
		\item If $ F : \mcal C \ra \mcal D $ is a *-functor, then $ F \circ \iota^{\mcal C} = \iota^{\mcal D} \circ F $.
	\end{enumerate}
\end{fact}

\clearpage

\noindent \hrulefill

\noindent \hrulefill Shami's suggestion: \hrulefill

}
\subsection{Some basic facts of C*-categories}\label{basics-cat}
Let $ \mcal C $ be a C*-category.
For $ X,Y, X_i  \in \t{Obj} (\mcal C)$, $ 1 \leq i \leq n $, we say:\\
(i) \textit{$ Y $ is a subobject of $ X $} if the morphism space $\mcal C(Y,X) $ contains an isometry,\\
(ii) a projection \textit{$ p \in \mcal C (X,X) $ factors through $ Y $} if there exists an isometry $ u \in \mcal C (Y,X) $ such that $ p = u u^* $,\\
(iii) \textit{$ X $ is a direct sum of $ \{X_i\}^n_{i=1} $} if for all $ 1 \leq i \leq n $, there exists an isometry $ u_i \in \mcal C(X_i , X) $ such that $ 1_X = \displaystyle \sum^n_{i=1} u_i u^*_i $.

In general, $ \mcal C $ may neither be closed under direct sums nor have every projection factoring through a subobject.
However, we have a C*-category $ \mcal K (\mcal C) $ which we refer as the \textit{unitary Karoubi envelope of $ \mcal C $}, such that the following holds:

(1) there exists a fully faithful $ * $-functor $ \iota : \mcal C \ra \mcal K (\mcal C) $ which is isometry on morphisms,

(2) every projection in $ \mcal K (\mcal C) $ factors through a subobject,

(3) $ \mcal K (\mcal C) $ is closed under direct sums, and

(4) every $ Z \in \t{Obj} (\mcal K (\mcal C)) $ appears as a subobject of direct sum of objects of the form $ \iota (X) $, $ X \in \t {Obj} (\mcal C) $.\\
Moreover, the pair $ \left( \mcal K (\mcal C), \iota \right)$ satisfies the universal property:\\
for every pair $ (\mcal D , \sigma) $ where $ \mcal D $ is a C*-category closed under direct sums and having every projection factoring through a subobject, and $ \sigma :\mcal C \ra \mcal D $ is a C*-functor, there exists a C*-functor $ \widetilde \sigma : \mcal K (\mcal C) \ra \mcal D $ such that $ \sigma $ is equivalent to $ \widetilde \sigma \circ \iota $ via a unique natural unitary.

It is rather easy to achieve conditions (1) and (2) by considering the \textit{unitary idempotent completion} of $ \mcal C $, denoted by $\t {proj} (\mcal C )$, whose objects are pairs $ (X , p) $ fo $ X \in \t {Obj} (\mcal C) $ and project $ p \in \mcal C(X,X) $.
The morphism space from $ (X,p) $ to $ (Y,q) $ consists of those $ f \in \mcal C(X,Y) $ satisfying $q \circ f \circ p = f $.
The *-structure on $\t{proj}(\mcal{C})$ is simply induced from $\mcal{C}$.
However, $ \t{proj} (\mcal C) $ might still be far from being closed under direct sums.
We denote the canonical functor $X \mapsto (X,1_X)$ for $ X \in \t{Obj} (\mcal C) $ by $ \alpha^{\mcal C} : \mcal C \ra \t{proj}(\mcal C)$.
Any C*-functor $ F:\mcal C \ra \mcal D $ between C*-categories, induces a canonical C*-functor between the unitary idempotent completions by simply applying $ F $; we denote this by proj$(F):  $proj$ (\mcal C) \ra $proj$ (\mcal D) $. Further, if $ F $ is monoidal, then $\t{proj}(F) $ is also.

\begin{fact}\label{projcat}
	\begin{enumerate}
		\item If $\mathcal{C}$ is a strict C*-tensor category, then $\t{proj}(\mcal C)$ also inherits this structure by extending the tensor product appropriately.
		Note that $ \alpha^{\mcal C} (X \otimes Y) = \alpha^{\mcal C} (X) \otimes \alpha^{\mcal C} (Y)$, and indeed $ \alpha^{\mcal C} $ is trivially monoidal.
		
		\item If $ \mcal C $ is a semi-simple C*-category, then so is proj$ (\mcal C) $, and $\alpha^{\mcal C}: \mcal C \ra \t {proj} (\mcal C) $ is an equivalence.
		
		\item If $ F : \mcal C \ra \mcal D $ is a C*-functor, then $ \t {proj}(F) \circ \alpha^{\mcal C} = \alpha^{\mcal D} \circ F $.
	\end{enumerate}
\end{fact}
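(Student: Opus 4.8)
The plan is to handle the three parts in increasing order of difficulty, since (3) is a one-line computation, (1) is a routine verification, and (2) carries essentially all of the content.

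For part (1) I would define the tensor product on objects by $(X,p)\ot(Y,q):=(X\ot Y,\,p\ot q)$ and on morphisms simply by $f\ot g$, the tensor product taken in $\mcal C$. First I would check that $p\ot q$ is again a projection: since $\ot$ is a $*$-bifunctor on a strict C*-tensor category, $(p\ot q)^*=p^*\ot q^*=p\ot q$, and by the interchange law $(p\ot q)\circ(p\ot q)=(p\circ p)\ot(q\circ q)=p\ot q$. Next I would verify that $f\ot g$ lands in the correct morphism space: if $f\in\t{proj}(\mcal C)((X_1,p_1),(Y_1,q_1))$ and $g\in\t{proj}(\mcal C)((X_2,p_2),(Y_2,q_2))$, then $(q_1\ot q_2)\circ(f\ot g)\circ(p_1\ot p_2)=(q_1 f p_1)\ot(q_2 g p_2)=f\ot g$. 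Associativity and strictness are inherited directly from $\mcal C$, with tensor unit $(\mathbf 1,1_{\mathbf 1})$. Finally, strictness of $\mcal C$ gives $1_{X\ot Y}=1_X\ot 1_Y$, whence $\alpha^{\mcal C}(X\ot Y)=(X\ot Y,1_{X\ot Y})=(X,1_X)\ot(Y,1_Y)=\alpha^{\mcal C}(X)\ot\alpha^{\mcal C}(Y)$, so $\alpha^{\mcal C}$ is strictly monoidal.

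For part (3), recall that $\t{proj}(F)$ is defined by $(X,p)\mapsto(F(X),F(p))$, which makes sense because $F$ preserves $*$ and composition, so $F(p)$ is a projection. Then for every object $X$ of $\mcal C$ one computes $\t{proj}(F)(\alpha^{\mcal C}(X))=\t{proj}(F)(X,1_X)=(F(X),F(1_X))=(F(X),1_{F(X)})=\alpha^{\mcal D}(F(X))$, using $F(1_X)=1_{F(X)}$; the identical computation on morphisms (both composites sending $f\mapsto F(f)$) shows the two functors agree on the nose.

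For part (2), the functor $\alpha^{\mcal C}$ is fully faithful essentially by definition, since $\t{proj}(\mcal C)(\alpha^{\mcal C}(X),\alpha^{\mcal C}(Y))=\{f\in\mcal C(X,Y):1_Y\circ f\circ 1_X=f\}=\mcal C(X,Y)$, and $\alpha^{\mcal C}$ is visibly isometric and $*$-preserving. The real work is essential surjectivity, and this is where semisimplicity enters: in a semisimple C*-category every projection factors through a subobject, so given $(X,p)$ there is an object $Y$ and an isometry $u\in\mcal C(Y,X)$ with $uu^*=p$ and $u^*u=1_Y$. I would then check that $u$ defines a unitary $\alpha^{\mcal C}(Y)=(Y,1_Y)\to(X,p)$ in $\t{proj}(\mcal C)$: the relations $p\circ u\circ 1_Y=uu^*u=u$ and $1_Y\circ u^*\circ p=u^*uu^*=u^*$ place $u$ and $u^*$ in the correct morphism spaces, while $u^*u=1_{(Y,1_Y)}$ and $uu^*=p=1_{(X,p)}$ show it is unitary. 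Hence every object of $\t{proj}(\mcal C)$ is isomorphic to one in the image of $\alpha^{\mcal C}$, so $\alpha^{\mcal C}$ is an equivalence; semisimplicity of $\t{proj}(\mcal C)$ then follows by transporting the semisimple structure of $\mcal C$ across this equivalence. The only step that is not pure bookkeeping is the claim that projections split in a semisimple C*-category, so the main obstacle is to pin down the ambient definition of semisimplicity and extract from it that each $\t{End}(X)$ is a finite-dimensional C*-algebra; under the paper's conventions (objects are finite direct sums of simples, and Schur's lemma renders $\t{End}(X)$ a multimatrix algebra) every projection is automatically of the form $uu^*$ for an isometry onto a subobject, and the remainder of the argument is formal.
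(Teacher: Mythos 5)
Your proposal is correct, and it fills in exactly the standard verifications one would expect: the paper itself records this statement as a \emph{Fact} with no proof, so there is no authorial argument to diverge from. All three parts check out — the interchange-law computation for $p\otimes q$ in (1), the on-the-nose equality of functors in (3), and the identification of projection-splitting (via $\t{End}(X)$ being a multimatrix algebra in a semisimple C*-category closed under finite direct sums) as the one substantive point in (2) — and your handling of that last point is the right one.
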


Throughout this article, we use monoidal C*-functors between C*-tensor categories where the tensor preserving properties of the functors are implemented by natural unitaries; we refer such functors as \textit{unitary tensor functors}.

Now, we briefly go over a constructive description of the Karoubi envelope; for details, see \cite{JP, GLR}.
Let $ \mcal C $ be a C*-category which is not necessarily semisimple.
By the \textit{unitary direct sum completion of $ \mcal C $}, we mean a category  $\mathcal{C}^{\oplus}$ with objects as formal unitary (finite) direct sums of objects of $ \mcal C $.
Morphisms between formal direct sums are matrices whose entries are morphisms between the corresponding objects.
The axioms of a C*-category guarantee this category is again C*.
If $\mathcal{C}$ is a C*-tensor category, then $\mathcal{C}^{\oplus}$ also inherits this structure by extending the tensor product additively, and thereafter, if $ \mcal C  $ is rigid, then so is $ \mcal C^\oplus $.
Furthermore, any unitary tensor functor $\mathcal{C}$ to $\mathcal{D}$ canonically extends to unitary tensor functor from $\mathcal{C}^{\oplus}$ to $\mathcal{D}$.
\begin{rem}
$ \t{proj}(\mathcal{C}^{\oplus}) $ turns out to be a unitary Karoubi envelope of the C*-category $ \mcal C $.
\end{rem}

\comments{

Let $ \mcal C $ be a C*-tensor category with duals which is not necessarily semisimple.
By the \textit{unitary direct sum completion of $ \mcal C $}, we mean a category  $\mathcal{C}^{\oplus}$ with objects as formal unitary direct sums of objects of $ \mcal C $.
Morphisms between formal direct sums are matrices of morphisms between the corresponding objects.
The axioms of a C*-category guarantee this category is again C*.
If $\mathcal{C}$ is a C*-tensor category, then $\mathcal{C}^{\oplus}$ also inherits this structure by extending the tensor product additively. Furthermore, any unitary tensor functor $\mathcal{C}$ to $\mathcal{D}$ canonically extends to unitary tensor functor from $\mathcal{C}^{\oplus}$ to $\mathcal{D}$

We now exhibit a concrete realization of $ \mcal K (\mcal C) $ when $ \mcal C $ is a strict, rigid C*-tensor category with every morphism space being finite dimensional.
Observe that every object in $ \t{proj}(\mcal C) $ (denoted by $ \widetilde {\mcal C} $ for brevity) decomposes into a direct sum of simple objects (that is, minimal projections in endomorphism spaces which are finite dimensional C*-algebras).
Let $ \t{Irr} (\widetilde {\mcal C}) $ be a set of representatives of all isomorphism classes of simple objects in $ \widetilde {\mcal C} $.
Define $ \mcal K (\mcal C) $ as the category whose objects are contravariant C*-functors $ F : \widetilde{\mcal C } \ra \t {Hilb}_{f.d}$ with `finite support', that is, $ \{p \in  \t{Irr} (\widetilde {\mcal C})  : F(p) \t{ is nonzero} \} $ is a finite set, and morphisms are natural (linear) transformations.
For $ Z \in \t{Obj} (\widetilde{\mcal C}) $, note that $ \widetilde {\mcal C} (\bullet , Z) $ is such a functor where the inner product on $ \widetilde {\mcal C} (W,Z)$ is given by the unique trace coming from any normalized standard solution to the conjugate equation for $ Z $.
Indeed, $ \mcal C $ sits in $ \mcal K (\mcal C) $ as a full C*-subcategory via
\begin{eqnarray*}
\iota\ :\ \mcal C \lra& \widetilde {\mcal C} &\lra \mcal K (\mcal C)\\
X  \longmapsto &(X,1_X)&\\
& Z&  \longmapsto  \widetilde{\mcal C} (\bullet,Z) \ .
\end{eqnarray*}
It is routine to check that $ \mcal K (\mcal C) $ satisfies the conditions (1) to (4) of unitary Karoubi envelope, and thereby becomes semisimple.
In fact, $ \mcal K (\mcal C) $ has a tensor structure defined by
\[
F\otimes G (\bullet) := \us {W,Z \in \t {Irr} (\widetilde{\mcal C})}  \bigoplus F(W) \; \otimes \; \mcal C (\bullet, W \otimes Z) \; \otimes \; G(Z)
\]
for $ F,G \in  \t {Obj} (\mcal K (\mcal C)) $.
(For details on the associator, see Section 2.3 of \cite{JP}.)
Also, $ \iota : \mcal C \ra \mcal K (\mcal C)$ is a unitary tensor functor and $ \mcal K (\mcal C) $ is rigid.
}
Before proceeding further, we exhibit the connection between semi-simple, rigid C*-tensor categories and oriented factor planar algebras.
We will just highlight the main points of this correspondence which is a kind of a folklore in the C*-tensor category, quantum group and subfactor communities.
We setup the following definition which will be useful throughout this paper.
\begin{defn}
	Let $ \mcal C $ be a rigid semi-simple C*-tensor category and $\mscr X = \{X_\iota \}_\iota  $ be a family of objects in $ \mcal C $. The full subcategory of $ \mcal C $ obtained from all possible direct sums of simple objects which appear as a sub-object of a finite tensor-fold of elements from the family and their duals, will be referred as the \textit{full subcategory tensor-generated by $ \mscr X $} and denoted by $\lab \mscr X \rab $.
	When this subcategory is the whole of $ \mcal C $, we simply say \textit{$ \mscr X $ tensor-generates $ \mcal C $}.
\end{defn}

\subsection{C*-tensor category associated to an oriented planar algebra}\label{OPA2C*} $ \  $

Let $ P $ be a $ \Lambda $-oriented factor planar algebra. We define a C*-tensor category $\mcal{C}^{P}$ as follows:
\begin{itemize}
\item Objects are words $w\in W$. 
\item For two objects $ v,w $, the morphism space $\mcal{C}^{P}(v,w):=P_{w{v^*}}$.
\item For objects $ u,v,w $ and morphisms $ f \in \mcal {C}^{P}(v,w), g \in \mcal {C}^{P}(u,v) $, composition is given by the action of multiplication tangle.
That is, $ f\circ g\coloneqq P_{M^v_{w,u}}(g,f) $.
\item $*$-structure on the category is given by the $ * $-structure of the planar algebra $ P $.
\item Tensor product of objects is just the concatenation of the words and for two morphism $ f \in \mcal {C}^{P}(u,v), f' \in \mcal {C}^{P}(u',v')  $, the tensor product $ f\otimes g $ is given by the action of the tangle
\[\psfrag{a}{$ \bigstar $}
\psfrag{b}{$ v $}
\psfrag{c}{$ u $}
\psfrag{d}{$ u' $}
\psfrag{e}{$ \bigstar $}
\psfrag{f}{$ \bigstar $}
\psfrag{g}{$ f $}
\psfrag{h}{$ v'$}
\psfrag{i}{$ g $}
\raisebox{-1.5em}{\includegraphics[scale=0.3]{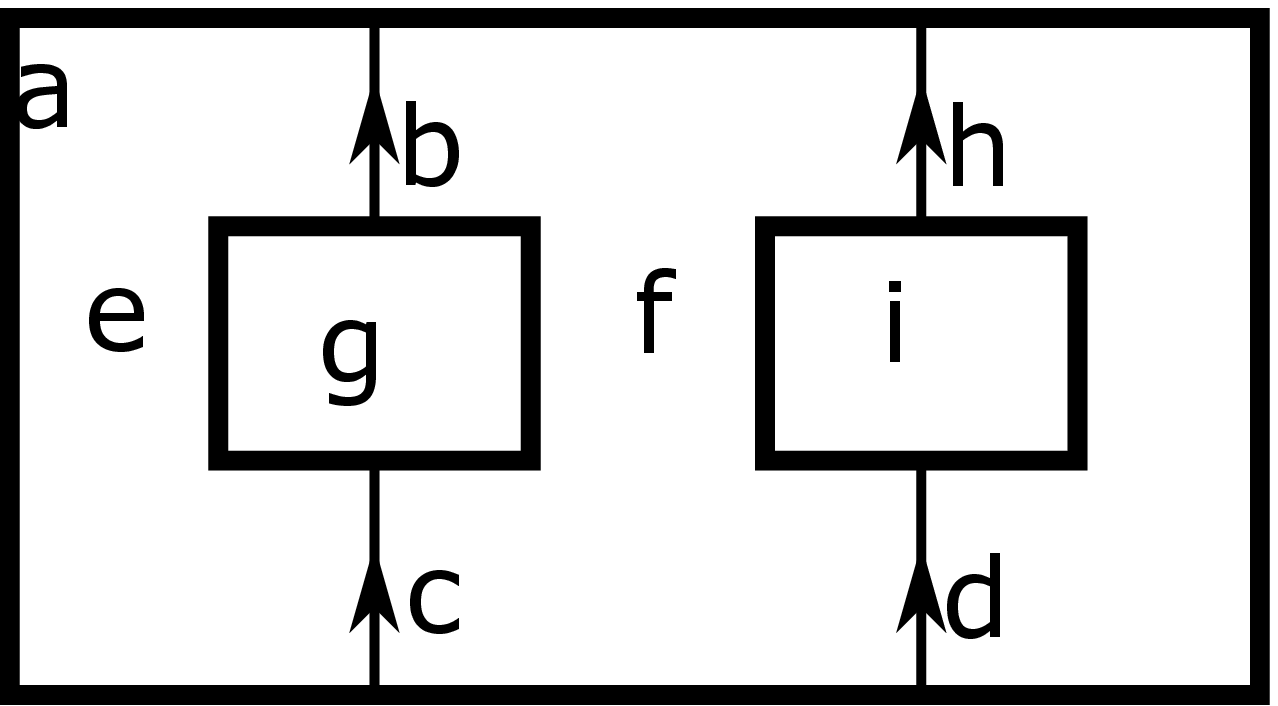}}.\]
\item The unit object is given by the empty word.
\item For each word $w\in W$, a dual object is given by ${w^*}$, with evaluation and coevaluation given by the action of tangles \psfrag{a}{$ \bigstar $}
\psfrag{b}{$ w $}
\raisebox{-1.2em}{\includegraphics[scale=0.2]{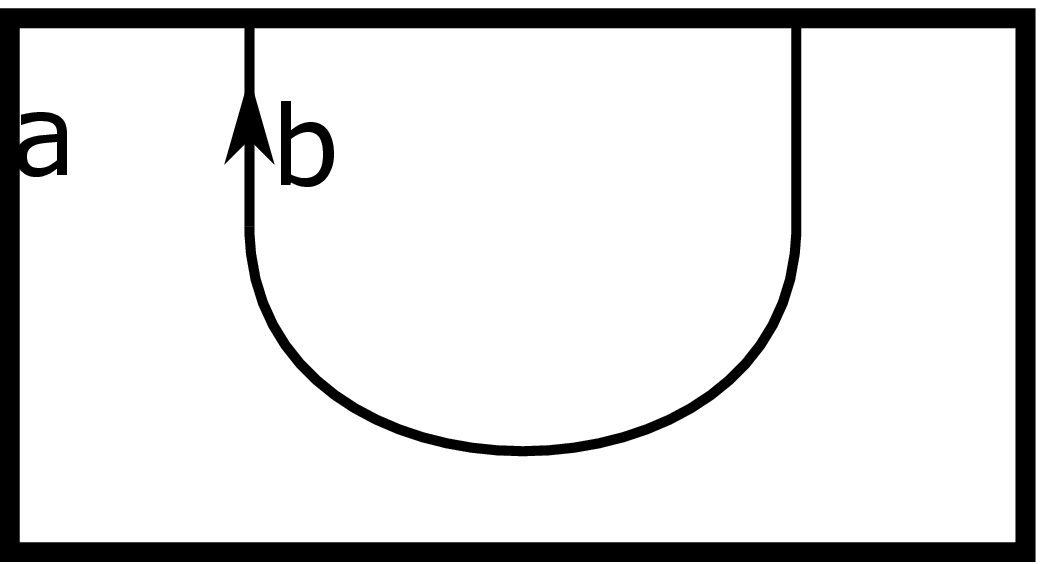}} and 
\psfrag{a}{$ \bigstar $}
\psfrag{b}{$ w $}
\raisebox{-1.2em}{\includegraphics[scale=0.2]{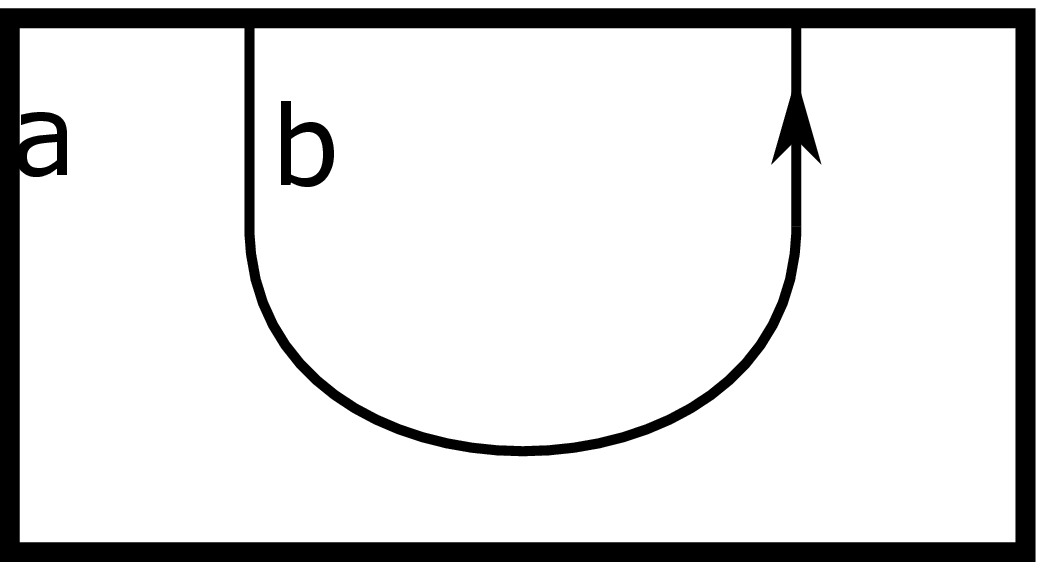}}.

\end{itemize}

While this is a C*-tensor category with duals, it does not have direct sums and subobjects.
The unitary Karaubi envelope will allow us to remedy this problem and give us a semisimple C*-tensor category.
\begin{defn} For a $\Lambda$-oriented factor planar algebra $P$, its \textit{projection category} is the rigid C*-tensor category $\mathcal{K}(\mathcal{C}^{P})$.
\end{defn}
Thus starting from a $\Lambda$-oriented factor planar algebra $P$, we get a rigid semisimple C*-tensor category $ \mathcal{K}(\mathcal{C}^{P}) $ as described above.
Note that $ \mathcal{K}(\mathcal{C}^{P}) $ is tensor-generated by $ \{1_{\lambda\oline\lambda}\}_{\lambda \in \Lambda} $.

\subsection{Oriented planar algebra associated to a C*-tensor category}\label{C*2OPA}$ \ $

Conversely, starting from a strict, rigid, semi-simple C*-tensor category $ \mcal C $ with the trivial object $ \mathbbm 1 $ being simple, and a family of objects $\mscr X = \{X_\lambda\}_{\lambda\in\Lambda} $ tensor-generating $ \mcal C $, one can define a $ \Lambda $-oriented planar algebra $ P^{\mscr X} $ (or simply $ P $) whose associated rigid C*-tensor category is equivalent to $ \mcal C $.
For this, we first fix normalized standard solutions $ (R_\lambda , \ol R_\lambda) $ to conjugate equations for each $ X_\lambda $ in the family. 
For $ \lambda \in \Lambda $, set $ X_{\ol \lambda} := \ol { X_\lambda} $, and for a word $ w := (\iota _1, \ldots , \iota _n) \in W = W_\Lambda $, let $ X_w$ denote the object $ X_{\iota _1} \otimes \cdots \otimes X_{\iota _n} $.
Define $ P_w := \mcal C (\mathbbm 1 , X_w) $ for all $ w \in W $.
Now, for a $ \Lambda $-oriented tangle $ T : (w_1 , \ldots , w_n) \ra w_0 $, one needs to define its action, that is, a multi-linear map $ P_T : P_{w_1} \times \cdots \times P_{w_n} \ra P_{w_0}$.
Let $ x_j \in P_{w_j} $ for $ 1\leq j \leq n $.
In the isotopy class of $ T $, choose a tangle diagram $ \widetilde{T} $ in \textit{standard form}, namely, (i) all discs (internal and external) are rectangles with sides parallel to the coordinate axes in $ \R^2 $, (ii) all strings are smooth with finitely many local maximas or minimas, (iii) all marked points are on the top side of every rectangle (internal or external) where the strings end transversally, and (iv) it is possible to slice $ \widetilde{T} $ into horizontal strips which contains exactly one local maxima or local minima or an internal rectangle.
For $ 1\leq j \leq n $, we label the internal rectangle in $ \widetilde{T} $ associated to the $ j^{\t {th}} $ internal disc in $ T $, by $ x_j \in P_{w_j} = \mcal C (\mathbbm 1 , X_{w_j}) $.
To each horizonatal strip of $ \widetilde{T} $, we assign a morphism prescribed by

\begin{align*}
\raisebox{-0.75 em}
{\psfrag{l}{$\lambda$}
	\psfrag{u}{$u$}
	\psfrag{v}{$v$}
	\includegraphics[scale=0.2]{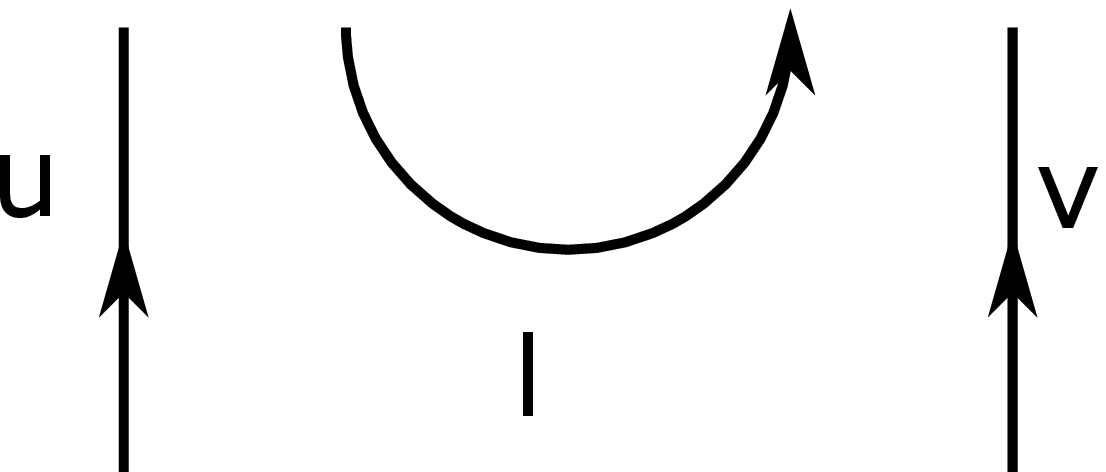}
} \longmapsto 1_{X_u} \otimes  R_{\lambda} \otimes 1_{X_v} & &
\raisebox{-0.75 em}
{\psfrag{l}{$\lambda$}
	\psfrag{u}{$u$}
	\psfrag{v}{$v$}
	\includegraphics[scale=0.2]{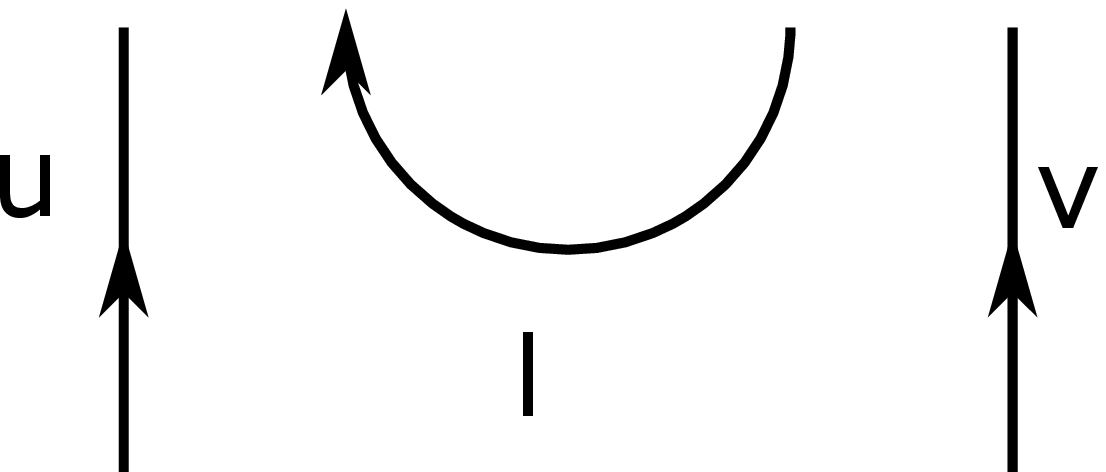}
} \longmapsto 1_{X_u} \otimes \ol R_{\lambda} \otimes 1_{X_v}\\
&&\\
\raisebox{-0.75 em}
{\psfrag{l}{$\lambda$}
	\psfrag{u}{$u$}
	\psfrag{v}{$v$}
	\includegraphics[scale=0.2]{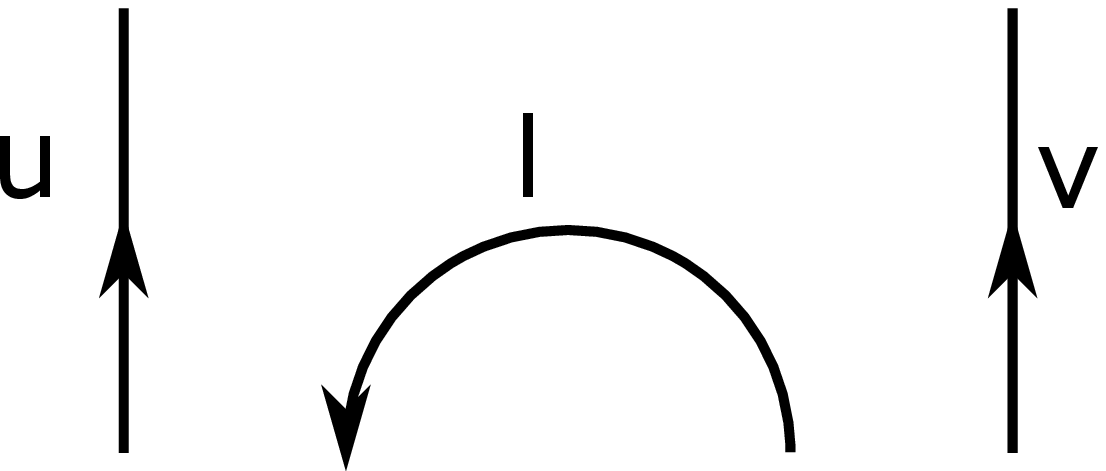}
} \longmapsto 1_{X_u} \otimes  R^*_{\lambda} \otimes 1_{X_v} & &
\raisebox{-0.75 em}
{\psfrag{l}{$\lambda$}
	\psfrag{u}{$u$}
	\psfrag{v}{$v$}
	\includegraphics[scale=0.2]{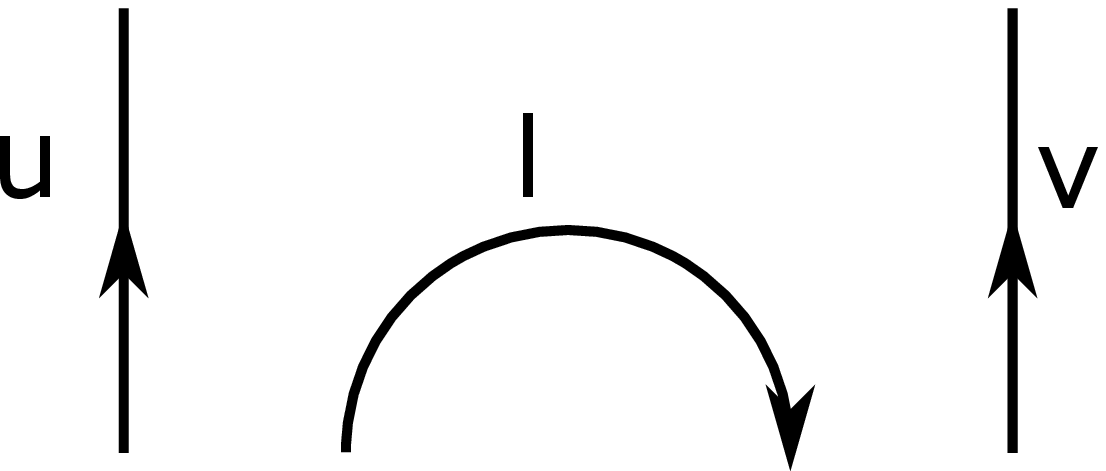}
} \longmapsto 1_{X_u} \otimes {\ol R}^{\; *}_{\lambda} \otimes 1_{X_v}
\end{align*}
\[
\raisebox{-0.75 em}
{\psfrag{l}{$\lambda$}
\psfrag{u}{$u$}
\psfrag{v}{$v$}
\psfrag{w}{$w_j$}
\psfrag{x}{$x_j$}
\includegraphics[scale=0.2]{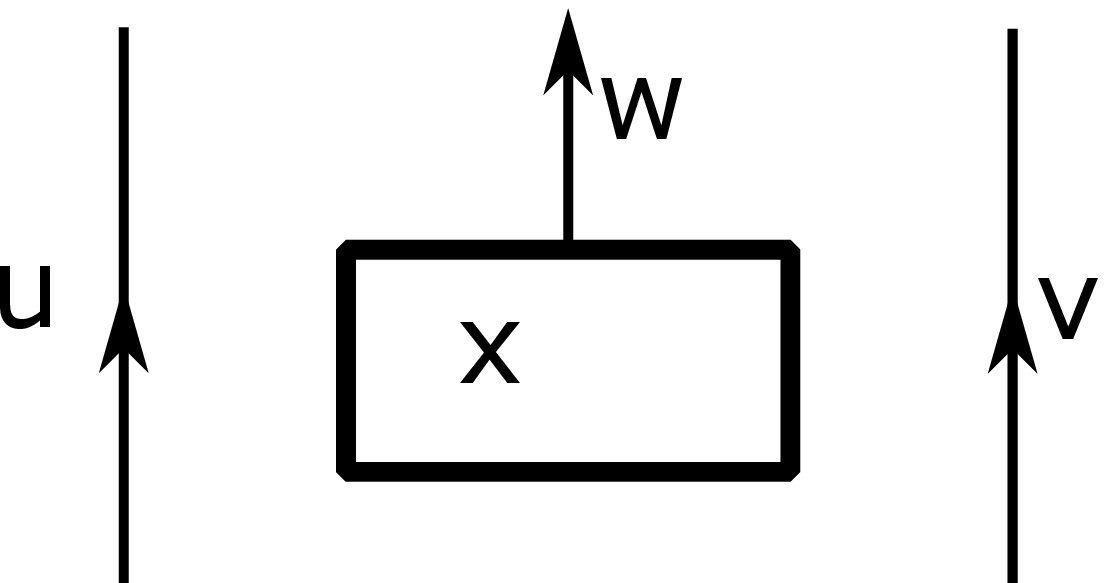}
} \longmapsto 1_{X_u} \otimes  x_j \otimes 1_{X_v}
\]
Define $ P_T(x_1,\ldots , x_n) $ as the composition of the morphisms associated to each horizontal strip.
The action is indeed well defined and the planar algebra is a factor planar algebra.
Observe that we use the strictness of the tensor structure in $ \mcal C $ at every step of this construction, starting from the definition of $ X_w $ till the action of tangles and its invariance under isotopy.
One can possibly extend this construction in the non-strict case as well by introducing appropriate associators.
\begin{rem}\label{OPA-C*-correspondence}
We summarize the above discussion.
Any rigid semi-simple C*-tensor category tensor-generated by $ \mscr X $ is equivalent to the $\mathcal{K}(\mathcal{C}^{P^{\mscr X}}) $.
Conversely, the projection category of any $ \Lambda $-oriented factor planar algebra $ P $, is tensor-generated by $ \mscr X \coloneqq \{1_{\lambda \ol \lambda}\}_{\lambda \in \Lambda} $; further, $ P $ is isomorphic to the $ \Lambda $-oriented factor planar algebra $ P^{\mscr X} $.
\end{rem}
\begin{rem}
Given any $ \Lambda $-oriented factor planar algebra $ P $, there exists a $ \rm{II}_1 $-factor $ N $ and a family $ \{X_\lambda\}_{\lambda \in \Lambda} $ of extremal, bifinite $ N $-$ N $-bimodules such that the $ \Lambda $-oriented planar algebra associated to this family is isomorphic to $ P $.
To see this, one considers the projection category $ \mathcal{K}(\mcal C^P )$ associated to $ P $.
Now by \cite{BHP}, any rigid, semi-simple C*-tensor category, in particular $ \mcal C^P $, is equivalent to a full subcategory of extremal, bifinite bimodules over a $ \rm{II}_1 $-factor.
Applying the converse above, one gets the required result.
\end{rem}
\comments{As an example, one can start with a $ II_1 $-factor $ N $ and a family $ \{X_\lambda\}_{\lambda \in \Lambda} $ of extremal, bi-finite $ N $-$ N $-bimodules

Let $ P $ be a $ \Lambda $-oriented factor planar algebra and $ \mcal C $ be its corresponding projection category.
Set $ \mcal L:= \left\{1_\lambda\oplus 1_{\bar{\lambda}}: \lambda \in \Lambda\right\} $ and $ \tilde{P} $ be the $ \mcal L $-oriented factor planar algebra obtained by above prescription.
Note that, $ \tilde{P} $ is a factor planar algebra in the sense of \cite{BHP}.
Let $ \tilde{\mcal C} $ be its projection category.
Then we have the following
\begin{prop}\label{proj}
$ \mcal C \cong \tilde{\mcal C} $ as tensor categories.
\end{prop}
\begin{proof}
\red{PENDING}	
\end{proof}
\begin{rem}
Thus by \Cref{proj} and \cite{BHP}, there is a $ II_1 $-factor $ M $ such that, $ \mcal C $ is equivalent to a full subcategory of extremal bifinite bimodules over M.
Further, if $ \Lambda $ is countable then $ M $ can be chosen to be $ L(\F_\infty) $.
\end{rem}

\vskip 1em}
\subsection{Free product of categories}\label{free prod prel} $ \ $

The notion of free product of two semi-simple $ \t C^* $-tensor categories with simple tensor units is due to Bisch and Jones.
In this article, we will use the version which was elaborated in \cite{GJR}.
We breifly include the notational set up and a fundamental result on free products here for the convenience of the reader.

Let $ \mcal C_+ $ and $ \mcal C_- $ be two strict, semi-simple $\t C^* $-tensor categories with simple tensor units $ \mathbbm {1}_+ $ and $ \mathbbm {1}_- $ respectively.
In our construction, we pick a strict model of $\mcal{C}_{\pm}$.
Let $ \Sigma $ be the set of words with letters in $ \t {Obj}(\mcal C_+) \cup \t {Obj}(\mcal C_-)$.
To a word $ \sigma \in \Sigma$, we associate the sub-word $ \sigma_+ \in \t {Obj}(\mcal C_+)$ (resp., $ \sigma_- \in \t {Obj}(\mcal C_-) $) by dropping all the letters in $ \sigma $ coming from $ \t {Obj}(\mcal C_-) $ (resp., $\t {Obj}(\mcal C_+) $).
The object obtained by tensoring the letters in $ \sigma_\pm $ will be denoted by $ t( \sigma_\pm )$ with the convention $ t(\varnothing) = \mathbbm {1_\pm}$ where appropriate.
For instance, if $ \sigma = a^+_1 a^-_2 a^+_3 a^-_4 a^-_5 $, then $ \sigma_+ =  a^+_1 a^+_3$, $t (\sigma_+) = a^+_1 \otimes a^+_3$, $ \sigma_- = a^-_2 a^-_4 a^-_5 $ and $t(\sigma_-) = a^-_2 \otimes a^-_4 \otimes a^-_5 $.
\begin{defn}
	Let $ \sigma, \tau \in \Sigma $.
	A `\textit{$ (\sigma,\tau )$-NCP}' consists of:
	\begin{itemize}
		\item  a \textit{non-crossing partitioning} of the letters in $ \sigma $ and $ \tau $ arranged at the bottom and on the top edges of a rectangle respectively moving from left to right, such that each partition block consists only of objects from $ \mcal C_+ $ or only of objects from $ \mcal C_- $ ,
		\item every block gives a pair of sub-words (possibly empty) of $ \sigma $ and $ \tau $, say, $ (\sigma_0,\tau_0) $, where $ \sigma_0 $ (resp. $ \tau_0 $) consists of letters in the partition coming from $ \sigma $ (resp. $ \tau $).
		For each such block, seen as a rectangle with the bottom labelled by $\sigma_0$ and the top labelled by $ \tau_0  $, we choose a morphism from $t(\sigma_0)  $ to $ t(\tau_0) $ in the appropriate category.		
	\end{itemize}
\end{defn}
We give an example of a $ (\sigma , \tau) $-NCP (from \cite{GJR}) in following figure, where $ \sigma = a^+_1 a^+_2 a^+_3 a^-_4$ $a^+_5a^+_6 a^-_7 a^+_8  $ and $ \tau =  b^+_1 b^-_2 b^-_3 b^+_4 b^+_5 $ with $ a_i^\vlon,b_{j}^\vlon \in \mcal C_\vlon$, $ \vlon \in \{+,-\} $.

\begin{center}
	\psfrag{1}{$ a^+_1 $}
	\psfrag{2}{$ a^+_2 $}
	\psfrag{3}{$ a^+_3 $}
	\psfrag{4}{$ a^-_4 $}
	\psfrag{5}{$ a^+_5 $}
	\psfrag{6}{$ a^+_6 $}
	\psfrag{7}{$ a^-_7 $}
	\psfrag{8}{$ a^+_8 $}
	\psfrag{f}{$ f_1 $}
	\psfrag{g}{$ f_2 $}
	\psfrag{h}{$ f_3 $}
	\psfrag{i}{$ f_4 $}
	\psfrag{j}{$ f_5 $}
	\psfrag{A}{$ b^+_1 $}
	\psfrag{B}{$ b^-_2 $}
	\psfrag{C}{$ b^-_3 $}
	\psfrag{D}{$ b^+_4 $}
	\psfrag{E}{$ b^+_5 $}
	\includegraphics[scale=0.3]{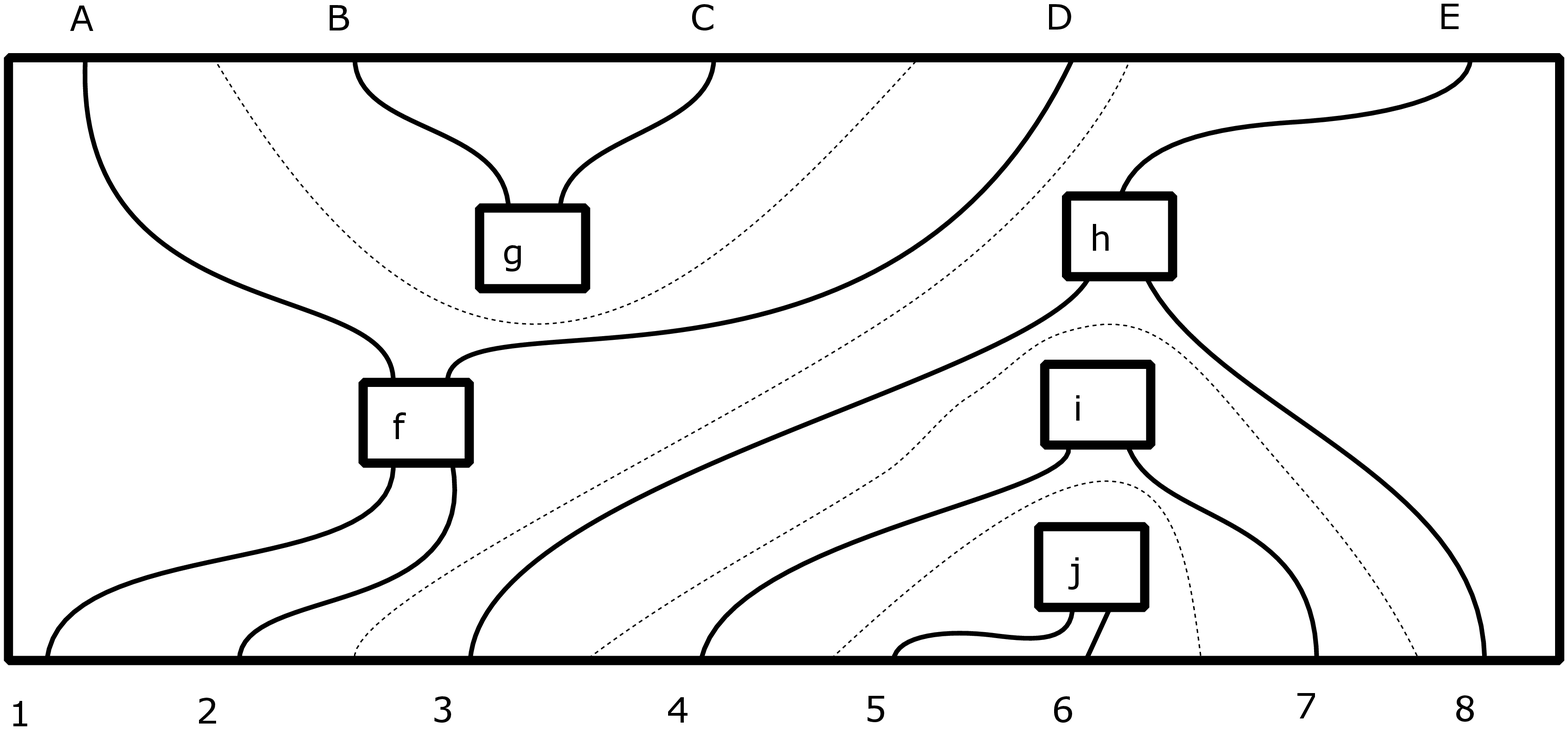}
\end{center}
Here, the pair of subwords corresponding to the partition blocks are $ \rho_1 = (a_1^+a_2^+,b_1^+b_4^+), \rho_2=(\varnothing,b_2^-b_3^-),\rho_3=(a_3^+a_8^+,b_5^+),\rho_4=(a_4^-a_7^-,\varnothing),\t{and }\rho_5= (a_5^+a_6^+,\varnothing) $.
Note that each letter of $ \rho_i$ either belongs Obj$(\mcal C_+) $ alone or Obj$(\mcal C_-) $ alone, for every $ i $ and each of $ \rho_i $ is assigned a morphism from the corresponding category.
For instance, all the letters of $ \rho_3 $ are objects of $ \mcal C_+ $ and is assigned the morphism $ f_3 \in \mcal C_+(a_3^+\otimes a_8^+,b_5^+) $.

We denote the set of all $ (\sigma ,\tau) $-NCPs by $ NCP(\sigma,\tau) $.
Now, to every $ T \in NCP(\sigma , \tau ) $, we can associate unique $ T_{\pm}\in NCP(\sigma_{\pm} , \tau_{\pm} )$  by deleting all blocks whose letters are labeled by the opposite sign.

Since all letters in $ \sigma_{\pm} $ and $ \tau_{\pm} $ come from either $ \mcal C_+ $ or $ \mcal C_- $ only, the non-crossing partitions  $T_{\pm} $ give rise to unique morphisms $ Z_{T_\pm}\in \mcal C_\pm \left(t(\sigma_\pm) , t(\tau_\pm) \right) $ using the standard graphical calculus for monoidal categories.

So, for any $ \sigma ,\tau \in \Sigma $ and $ T \in NCP (\sigma  , \tau) $, we have morphisms $ Z_{T_\pm} \in \mcal C_\pm \left(t(\sigma_\pm) , t(\tau_\pm) \right) $.
We write $ Z_T\coloneqq Z_{T_+} \otimes Z_{T_-} \in \mcal C_+ \left(t(\sigma_+), t(\tau_+)\right) \otimes \mcal C_- \left(t(\sigma_-), t(\tau_-)\right)$.
For example, for the NCP $ T $ in above figure, 
$$ Z_{T_+} = (f_1\otimes f_3)\circ (1_{a_1^+\otimes a_2^+\otimes a_3^+}\otimes f_5 \otimes1_{a_8^+})\ \text{and}\ Z_{T_+} = f_2\circ f_4$$

We can now describe the category $ \mcal{NCP} $ as follows.
\begin{itemize}
	\item Objects in $\mcal{NCP}$ are given by $\Sigma$.
	
	\bigskip
	
	\item For $ \sigma , \tau \in \Sigma $, the hom space between them is given by
	$$\mcal {NCP} (\sigma , \tau) \coloneqq \t {span} \left\{Z_T: T \in NCP(\sigma , \tau) \right\} \subset \mcal C_+ \left(t(\sigma_+), t(\tau_+)\right) \otimes \mcal C_- \left(t(\sigma_-), t(\tau_-)\right).$$
	
\end{itemize}

Composition of morphisms is given by composing the tensor components, which is obviously bilinear, and associative. There is also a $ * $-structure given by applying $ * $ on each of the tensor components. The tensor structure arises in the obvious way, by horizontal juxtaposition of non-crossing partitions, and taking the tensor product of the associated morhisms. This is a C*-tensor category (see \cite{GJR} for detailed explanations and proofs of these statements).
Further, for $ \vlon = \pm $, we have an obvious fully faithful unitary tensor functor $\gamma_{\vlon} : \mcal C_\vlon \lra \mcal {NCP} $ sending $ X \in \t {Obj} (\mcal C_\vlon) $ to the length one word $X \in \t {Obj} (\mcal {NCP})$.
The tensor preserving property of $ \gamma_\vlon $ is implemented by $ J^{\gamma_\vlon}_{X,Y} \coloneqq 1_{X\otimes Y} \otimes 1_{\mathbbm 1_-} $ or $ 1_{\mathbbm 1_+} \otimes 1_{X\otimes Y}$ in $ \mcal {NCP} \left(\gamma_\vlon (X) \otimes \gamma_\vlon (Y) , \gamma_\vlon (X\otimes Y )\right) $ according as $ \vlon $ is $ + $ or $ - $.
\begin{defn}\label{elemncp}
A $(\sigma , \tau) $-NCP $T$ will be called \textit{elementary} if $ \sigma = (\sigma_1, \ldots , \sigma_n) $,  $ \tau = (\tau_1, \ldots,\tau_n) $ for some $ n $, and the only block partitions of $ T $ are $ (\sigma_i , \tau_i) $ for $ 1 \leq i \leq n $ where at most one of $ \sigma_1, \ldots , \sigma_n, \tau_1, \ldots,\tau_n $ is empty.
\end{defn}\begin{rem}\label{elemncpgen}
Any morphism in $ \mcal {NCP} $ can be expressed as a linear combination of composition of $ Z_T $'s for $ T $ being elementary NCP.
\end{rem}



\begin{defn}[{\cite{GJR}}]
	The \textit{free product of the categories $ \mcal C_+ $ and $ \mcal C_- $} (as above) is defined as the unitary idempotent completion proj$(\mcal{NCP})$ which we denote by $\mathcal{C}_{+}*\mathcal{C}_{-}$. 
\end{defn}

From \Cref{projcat}, it is clear that $\mathcal{C}_{+}*\mathcal{C}_{-}$ is a C*-tensor category with simple tensor unit containing $ \mcal C_\pm $ as full tensor subcategories via the fully faithful unitary tensor functors
\[
\iota_\pm \ : \ \mcal C_\pm \ \os {\displaystyle \gamma_\pm} \lra \ \mcal {NCP} \ \os {\displaystyle \alpha^{\mcal {NCP}}} \lra \ \mathcal{C}_{+}*\mathcal{C}_{-}\ .
\]

\begin{defn}
	Let $\Irr(\mcal{C}_{\pm})$ denote a choice of object from each isomorphism class of simple objects, such that the tensor units are chosen to represent their isomorphism class.
	Then $\Sigma_{0}:= \{\varnothing\} \cup \left\{a^{\vlon_{1}}_{1} \ldots  a^{\vlon_{k}}_{k} \; : \;  k\in \N, \ \vlon_{i} \in \{ \pm \} , \ a^{\vlon_i}_{i}\in \Irr(\mcal{C}_{\vlon_i})\setminus\{\mathbbm{1}_{\vlon_i}\},\ \vlon_{i} = -\vlon_{i+1} \t { for } 1\leq i \leq k \right\} $.
\end{defn}
With the above notations, we have the following proposition (see \cite{GJR} for proof).
\begin{prop}\label{freeprod}
	The free product $ \mcal C_+ \ast \mcal C_-  $ is a strict, semi-simple C*-tensor category with $ \mcal C_\pm $ as full tensor subcategories (via $ \iota_\pm $) which tensor-generate $ \mcal C_+ \ast \mcal C_- $.
	Moreover, the fully faithful unitary tensor functor $ \alpha^{\mcal {NCP}} : \mcal{NCP} \ra \mcal C_+ \ast \mcal C_- $  gives rise to a bijection between $\Sigma_{0}$ and isomorphism classes of simple objects in $\mcal{C}_{+}*\mcal{C}_{-}$.
	Further, if $ \mcal C_\pm  $ are rigid, then so is $ \mcal C_+ \ast \mcal C_- $.
\end{prop}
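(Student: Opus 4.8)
The plan is to carry the strict C*-tensor structure of $\mcal{NCP}$ across the idempotent completion, to produce a full list of simple objects by reducing arbitrary words, and then to compute the morphism spaces between reduced words via the non-crossing calculus. First I would dispose of the structural claims. Since $\mcal{NCP}$ is a strict C*-tensor category, \Cref{projcat}(1) gives that $\mcal C_+ \ast \mcal C_- = \t{proj}(\mcal{NCP})$ is again a strict C*-tensor category. Its unit is $\alpha^{\mcal{NCP}}(\varnothing)$, and since $\mathbbm 1_\pm$ are simple,
\[
\mcal{NCP}(\varnothing,\varnothing) \subseteq \mcal C_+(\mathbbm 1_+,\mathbbm 1_+) \otimes \mcal C_-(\mathbbm 1_-,\mathbbm 1_-) = \C,
\]
so the unit is simple. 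The functors $\iota_\pm = \alpha^{\mcal{NCP}} \circ \gamma_\pm$ are composites of fully faithful unitary tensor functors, hence are themselves such; thus $\mcal C_\pm$ sit inside the free product as full tensor subcategories. As every object of $\t{proj}(\mcal{NCP})$ is a subobject of $\alpha^{\mcal{NCP}}(\sigma)$ for some word $\sigma$, and every word is a tensor product of length-one words $\iota_\pm(X)$, the subcategories $\mcal C_\pm$ tensor-generate $\mcal C_+ \ast \mcal C_-$.

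Next I would show that every word decomposes into objects indexed by $\Sigma_0$. Using semisimplicity of $\mcal C_\pm$ and the graphical calculus, I would apply three reduction moves to a word $\sigma$, each realised by an isometry in $\mcal{NCP}$ with range projection in $\t{End}(\sigma)$: (i) split each letter into its simple summands; (ii) delete any letter equal to a unit object $\mathbbm 1_\pm$, via a block carrying the morphism $1_{\mathbbm 1_\pm}$ with empty top; and (iii) fuse two adjacent same-sign letters $a,b$ by decomposing $a \otimes b$ into simples inside a single monochromatic block. Iterating these moves terminates (the total letter count strictly decreases) and yields a finite family of isometries $u_i \in \mcal{NCP}(\rho_i,\sigma)$ with $\rho_i \in \Sigma_0$ and $\sum_i u_i u_i^* = 1_\sigma$; that is, $\alpha^{\mcal{NCP}}(\sigma)$ is a direct sum of the reduced words $\rho_i$ in $\t{proj}(\mcal{NCP})$. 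Since the morphism spaces of $\mcal{NCP}$, and hence of $\t{proj}(\mcal{NCP})$, are finite dimensional, any object $(\sigma,p)$ is a subobject of such a decomposable $\alpha^{\mcal{NCP}}(\sigma)$ and therefore splits as a finite direct sum of the $\rho_i$.

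The crux, and where I expect the main difficulty, is the morphism computation for reduced words: for $\sigma,\tau \in \Sigma_0$,
\[
\mcal{NCP}(\sigma,\tau) =
\begin{cases}
\C\,1_\sigma, & \sigma=\tau,\\
0, & \sigma \neq \tau.
\end{cases}
\]
By \Cref{elemncpgen} it suffices to control elementary NCPs and their composites. Each block is monochromatic and carries a morphism between subwords $(\sigma_0,\tau_0)$ that is nonzero only when $t(\sigma_0)$ and $t(\tau_0)$ share a simple summand. The key structural point is that the non-crossing condition together with the strict alternation of signs in $\sigma$ and $\tau$ prevents any block from spanning across a letter of the opposite sign; a contributing partition must therefore pair the $+$-letters of $\sigma$ with those of $\tau$ in order, and likewise the $-$-letters, with no letter isolated in its own block (such a block would require a nonzero morphism to or from a unit, which reduction has excluded). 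Simplicity of the letters and Schur's lemma then force each matched pair to be the same simple object carrying a scalar, so $\sigma = \tau$ and the space is spanned by $1_\sigma$. The delicate step is making this non-crossing/alternation bookkeeping rigorous; I would argue by induction on word length, peeling off the leftmost letter and showing its block cannot reach past the adjacent oppositely-signed letter without crossing or collapsing onto a unit. Taking $\sigma = \tau$ shows every $\rho \in \Sigma_0$ is simple, distinct elements of $\Sigma_0$ are non-isomorphic, and, with the previous paragraph, that $\mcal C_+ \ast \mcal C_-$ is semisimple with $\alpha^{\mcal{NCP}}$ inducing a bijection between $\Sigma_0$ and the isomorphism classes of simple objects.

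Finally, assuming $\mcal C_\pm$ rigid, I would take the dual of a word $\sigma = x_1\cdots x_m$ to be the reversed word of letterwise duals, with evaluation and coevaluation given by the nested family of monochromatic cups and caps assembled from the chosen solutions to the conjugate equations for each $x_i$ in $\mcal C_{\vlon_i}$. These form legitimate non-crossing partitions, and the conjugate equations follow letter by letter from the graphical calculus in $\mcal C_\pm$; hence $\mcal{NCP}$ is rigid and rigidity passes to the idempotent completion $\mcal C_+ \ast \mcal C_-$.
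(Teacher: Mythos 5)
Your argument is correct in outline, but note that the paper itself does not prove this proposition --- it is quoted with the remark ``see [GJR] for proof,'' so there is no internal proof to compare against. What you have written is essentially the standard free-product argument that [GJR] carries out: reduce an arbitrary word to elements of $\Sigma_0$ by splitting letters into simples, deleting tensor units, and fusing adjacent same-sign letters; then show that for $\sigma,\tau\in\Sigma_0$ the space $\mcal{NCP}(\sigma,\tau)$ is $\delta_{\sigma\tau}\,\C\,1_\sigma$; deduce simplicity, mutual non-isomorphism, semisimplicity, and the bijection with $\Sigma_0$; and exhibit duals by nested monochromatic cups and caps. The one step you rightly flag as delicate is also the only place where your write-up is not yet a proof. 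Be careful that a block lying entirely on one side of the rectangle need not carry a zero morphism on sign grounds alone: if $\sigma_0=(a,c)$ with $c\cong\ol a$, then $\mcal C_+(a\otimes c,\mathbbm 1_+)\neq 0$, so you cannot dismiss such blocks directly. The correct mechanism is the one you gesture at: any such block traps the intervening opposite-sign letters, and the non-crossing condition forces one of them into a singleton block carrying a morphism between a non-trivial simple and the unit, which vanishes. A clean way to make this rigorous, slightly tidier than peeling off the leftmost letter, is to use the fact that every non-crossing partition of a linearly ordered set contains a block which is an interval (in the boundary order $\sigma_1<\cdots<\sigma_m<\tau_n<\cdots<\tau_1$): an interval block contained in one row is a singleton by alternation and hence contributes zero, so a contributing partition must have its interval block equal to $\{\sigma_m,\tau_n\}$ with $\sigma_m\cong\tau_n$; remove it and induct. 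This yields exactly the order-preserving letterwise pairing you assert, and with it the rest of your argument goes through.
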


\begin{rem}
	By \Cref{basics-cat} and above proposition, $ \mcal K(\mcal {NCP}) \cong  \mcal C_+ \ast \mcal C_- $.
\end{rem}

The construction of the free product in \cite{GJR} is explicit, but was tailored to graphical calculus considerations. There are many other possible candidates for a free product construction. To justify calling it \textit{the} free product, we need to verify that it satisfies a universal property.
\begin{thm}\label{freeprodunilem}
	Let $ \mcal C $, $ \mcal C_+ $ and $ \mcal C_- $ be a strict, rigid, semi-simple C*-tensor categories and $ F_\pm : \mcal C_\pm \ra \mcal C $ be unitary tensor functors.
	Then, there exists a triplet $\left( \widetilde{F} \ ,\ \kappa_+ \ ,\ \kappa_- \right)$ where $ \widetilde F : \mcal C_+ \ast \mcal C_- \ra \mcal C $ is a unitary tensor functor and $ \kappa_\pm : F_\pm \ra \widetilde F \circ \iota_\pm $ are unitary monoidal natural isomorphisms.
	Moreover, $\left( \widetilde{F} \ , \ \kappa_+ \ , \ \kappa_- \right)$ is unique up to a unique unitary monoidal natural isomorphism compatible with $ \kappa_\pm $.
\end{thm}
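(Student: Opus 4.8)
The plan is to construct the functor first on the pre-completion category $\mcal{NCP}$ and then to extend it across the idempotent completion. Concretely, I would build a unitary tensor functor $G:\mcal{NCP}\ra\mcal C$ together with unitary monoidal natural isomorphisms $\mu_\pm:F_\pm\ra G\circ\gamma_\pm$. Since $\mcal C$ is semi-simple, every projection in $\mcal C$ factors through a subobject, so $G$ extends along $\alpha^{\mcal{NCP}}$ (uniquely up to a unique natural unitary, by the universal property of the idempotent completion) to a unitary tensor functor $\widetilde F:\mcal C_+\ast\mcal C_-=\t{proj}(\mcal{NCP})\ra\mcal C$, equipped with a monoidal natural unitary $\nu:\widetilde F\circ\alpha^{\mcal{NCP}}\ra G$. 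Since $\iota_\pm=\alpha^{\mcal{NCP}}\circ\gamma_\pm$, the required isomorphisms $\kappa_\pm:F_\pm\ra\widetilde F\circ\iota_\pm$ are then obtained by composing $\mu_\pm$ with the restriction of $\nu^{-1}$ along $\gamma_\pm$.

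Next I would define $G$. On objects, a word $\sigma=(a_1,\ldots,a_n)$ with $a_i\in\t{Obj}(\mcal C_{\vlon_i})$ is sent to the strict tensor product $G(\sigma):=F_{\vlon_1}(a_1)\ot\cdots\ot F_{\vlon_n}(a_n)$, with $G(\varnothing):=\mathbbm 1$. On morphisms, I would define $G$ on a spanning element $Z_T$, for $T\in NCP(\sigma,\tau)$, by transporting the non-crossing diagram into $\mcal C$: decorate each block by $F_\vlon$ applied to the morphism chosen on that block (using the tensorators of $F_\vlon$ to interpret $F_\vlon$ on multi-strand blocks), and read off the resulting planar composite as a morphism $G(\sigma)\ra G(\tau)$. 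The non-crossing hypothesis is precisely what guarantees that this planar diagram can be evaluated in the strict tensor category $\mcal C$ without recourse to any braiding, so that $G(Z_T)$ is determined by the decorated diagram.

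The main obstacle is to show that $Z_T\mapsto G(Z_T)$ descends to a well-defined \emph{linear} map on each hom-space $\mcal{NCP}(\sigma,\tau)=\t{span}\{Z_T\}$ and that the resulting $G$ is functorial and $*$-preserving. For well-definedness the point is that the relations among the generators $Z_T$ are generated by the graphical calculus of non-crossing diagrams established in \cite{GJR} (planar isotopy, together with the moves that identify diagrams giving the same element of $\mcal C_+(t(\sigma_+),t(\tau_+))\ot\mcal C_-(t(\sigma_-),t(\tau_-))$); each such move is carried to a valid planar identity in $\mcal C$ because $F_\pm$ are unitary tensor functors and hence respect exactly the corresponding relations in $\mcal C_\pm$. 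Compatibility with composition and with $*$ I would verify after reducing, via \Cref{elemncpgen}, to elementary NCPs: for an elementary NCP the morphism $Z_T$ is visibly a tensor product of single-block morphisms, so functoriality reduces to functoriality of the $F_\pm$ together with the naturality and coherence of their tensorators, while $*$-preservation follows because each $F_\pm$ is a $*$-functor. Finally, the tensorator $J^G$ is assembled from the $J^{F_\pm}$ (inserting identities on the strands untouched by a given block), and its unitarity and coherence are inherited from those of $F_\pm$; this is where I expect the bulk of the diagrammatic bookkeeping to lie.

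For the comparison isomorphisms, observe that on a length-one word $\gamma_\pm(X)=X$ one has $G(\gamma_\pm(X))=F_\pm(X)$, and likewise $G\circ\gamma_\pm=F_\pm$ on morphisms, so $\mu_\pm$ may be taken to be the identity; monoidal naturality is then immediate from the compatibility of $J^G$ with $J^{\gamma_\pm}$ and $J^{F_\pm}$. For the uniqueness clause, suppose $(\widetilde F',\kappa_\pm')$ is another such triplet. Since $\mcal C_\pm$ tensor-generate $\mcal C_+\ast\mcal C_-$ through $\iota_\pm$ (\Cref{freeprod}), the compatibility requirement $\eta_{\iota_\pm(X)}\circ\kappa_{\pm}=\kappa'_{\pm}$ forces the candidate natural isomorphism $\eta:\widetilde F\ra\widetilde F'$ on the generating objects $\iota_\pm(\t{Obj}(\mcal C_\pm))$; monoidal naturality then propagates $\eta$ uniquely to all tensor products, to duals (using rigidity, since monoidal natural isomorphisms are automatically compatible with the chosen dualities), to direct sums, and to subobjects. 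Hence $\eta$ exists, is unitary and monoidal, and is the unique such comparison, establishing the claim.
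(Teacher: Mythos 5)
Your overall architecture matches the paper's: build a unitary tensor functor $G:\mcal{NCP}\ra\mcal C$ with $G\circ\gamma_\pm=F_\pm$, pass to $\t{proj}(\mcal{NCP})$ using semi-simplicity of $\mcal C$, and for uniqueness observe that compatibility with $\kappa_\pm$ forces the comparison on generators. But the single hardest step --- well-definedness of $G$ on morphisms --- is where your argument has a genuine gap. The hom-space $\mcal{NCP}(\sigma,\tau)$ is \emph{defined} as the span of the $Z_T$ inside $\mcal C_+\left(t(\sigma_+),t(\tau_+)\right)\ot\mcal C_-\left(t(\sigma_-),t(\tau_-)\right)$, and the $Z_T$ are far from linearly independent; to define $G$ by $Z_T\mapsto Z^{\mcal C}_{FT}$ you must show that every linear relation $\sum_i c_iZ_{T_i}=0$ holding in that tensor product of hom-spaces implies $\sum_i c_iZ^{\mcal C}_{FT_i}=0$ in $\mcal C(G\sigma,G\tau)$. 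Your justification --- that the relations among the generators are generated by graphical moves, each of which is carried to a valid planar identity in $\mcal C$ --- presupposes a diagrammatic presentation of these linear relations that is neither supplied by \cite{GJR} nor obvious: the relations are simply whatever linear dependencies happen to hold in the ambient tensor product, and there is no a priori list of moves generating them. Note also that $Z^{\mcal C}_{FT}$ depends on the planar interleaving of the blocks, not only on the pair $(Z_{T_+},Z_{T_-})$, so one cannot simply factor the assignment through the tensor product. The paper closes this gap with a positivity argument: it equips $\mcal{NCP}(\sigma,\tau)$ with the inner product $\lab Z_S,Z_T\rab=\t{tr}_{t(\sigma_+)}\ot\t{tr}_{t(\sigma_-)}\left(Z_{T^*\circ S}\right)$, transports normalized standard solutions of the conjugate equations through $F_\pm$ to obtain a positive faithful functional $\vphi_{G\sigma}$ on $\mcal C(G\sigma,G\sigma)$, and verifies $\lab Z_S,Z_T\rab=\vphi_{G\sigma}\bigl((Z^{\mcal C}_{FT})^*\circ Z^{\mcal C}_{FS}\bigr)$; faithfulness and positivity then give well-definedness and injectivity of $G$ at once. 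Some such use of the C*/unitarity hypotheses appears unavoidable here, and your proof is incomplete without it.

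A secondary, smaller issue: in the uniqueness part you correctly note that the comparison $\eta$ is forced on the generating objects and propagates by monoidality, which gives uniqueness; but existence still requires checking that the forced formula is \emph{natural} with respect to all morphisms of $\mcal C_+\ast\mcal C_-$. The paper reduces this to elementary NCPs via \Cref{elemncpgen} and carries out the computation using the monoidality and naturality of the $\kappa_\pm$; this verification is routine but not automatic and should at least be flagged as a step to be done.
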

\begin{proof}
	We will first construct a unitary tensor functor $ G: \mcal{NCP}\ra \mcal C $ such that $ F_\pm = G \circ \gamma_\pm $.
	Suppose such a $ G $ exists.
	Since $ \mcal C $ is a semi-simple, strict C*-tensor category, by \Cref{projcat}(1) and (2), $ \alpha^{\mcal C} :\mcal C \ra \t{proj}(\mcal C)$ is a monoidal C*-equivalence.
	Choose $ \beta: \t{proj}(\mcal C) \ra \mcal C $ such that $ \alpha^{\mcal C} \circ \beta $ and $ \beta \circ \alpha^{\mcal C} $ are monoidally equivalent to the corresponding identity functors via natural unitaries.
	In particular, let $ \lambda: \t {id}_{\mcal C} \ra \beta \circ \alpha^{\mcal C}$ be natural monoidal unitary.
	Then, $ \kappa_\pm \coloneqq \lambda_{F_\pm} : F_\pm \ra \beta \circ \alpha^{\mcal C} \circ F_\vlon$ is also a natural monoidal unitary.
	Define $ \widetilde F \coloneqq \beta \circ \t {proj}(G)\  : \ \mcal C_+ \ast \mcal C_- \ra \mcal C$ which is a unitary tensor functor by \Cref{projcat}(3).
	For $ \vlon =\pm $, the restriction of $ \widetilde F $ to $ \mcal C_\vlon $ is given by
	\[
	\widetilde F \circ \iota_\vlon \
	= \ \beta \circ \t {proj}(G) \circ \alpha^{\mcal {NCP}} \circ \gamma_\vlon = \beta \circ \alpha^{\mcal C} \circ G \circ \gamma_\vlon
	\ =\ \beta \circ \alpha^{\mcal C} \circ F_\vlon
	\ \os {\displaystyle \kappa_\vlon}\longleftarrow
	\ F_\vlon
	\]
	where the second last equality comes from \Cref{projcat}(3).
	\vskip 1em
	We will now construct $ G $.
	While applying the functor $ F_\pm $ on an object or a morphism, we will often drop the sign in the suffix and simply write $ F $; the sign can be read off from the category $ \mcal C_\pm $ in which the object or the morphism belongs.
	For $ \sigma = X_1\ \ldots\ X_m \in \t {Obj} (\mcal {NCP}) $, define 
	\begin{align*}
		G\sigma \coloneqq & \  F X_1 \otimes \cdots \otimes F X_m \in \t {Obj}( \mcal C) \t {, and}\\
		F \sigma \coloneqq & \ \t { the word } F (X_1) \ \ldots \ F (X_m) \ .
	\end{align*}
	
	To define $ G$ at the level of morphisms, consider $ \sigma, \tau \in  \t {Obj} (\mcal {NCP}) $ and a $ (\sigma,\tau) $-NCP $ T $.
	For $ \vlon = \pm$, suppose $ J^\vlon : \otimes \circ (F_\vlon \times F_\vlon) \ra F_\vlon \circ \otimes  $ (resp., $ \eta_\vlon : F_\vlon \mathds 1_\vlon \ra \mathds 1 $) is a natural unitary (resp., unitary) which implements the tensor- (resp., unit-) preserving property of $ F_\vlon $.
	Again, applying $ F$ on the morphisms assigned to the partition blocks of $ T $ and composing with appropriate $ J^\vlon $'s and $ \eta_\vlon $'s, we get a $ (F\sigma , F\tau) $-NCP which we denote by $ FT $.
	Note that the letters of $ F\sigma $ and $ F\tau $, and the morphisms assigned to the partition blocks of $ FT $ all belong to the strict tensor category $ \mcal C $.
	Using graphical calculus of morphisms in $ \mcal C$, the NCP $ FT $ yield a unique morphism $Z^{\mcal C}_{FT}  \in \mcal C (G\sigma , G\tau )$.
	The obvious choice of $ G $ would be
	\[
	\mcal{NCP}(\sigma , \tau) \in Z_T = Z_{T_+} \otimes Z_{T_-} \os {\displaystyle G} \longmapsto Z^{\mcal C}_{FT} \in \mcal C (G\sigma , G \tau)
	\]
	and extending it linearly.
	
	We need to show that $ G $ is well-defined at the level of morphisms.
	Consider two objects $ \sigma = X_1\ \ldots\ X_m$ and $\tau $ in  $\mcal {NCP}$ and two $ (\sigma ,\tau) $-NCPs $ S $ and $ T $.
	For $ 1 \leq i \leq m $, choose a dual $\ol X_i $ of $ X_i $, and a normalized standard solution $ (R_i , \ol R_i) $ to the conjugate equation for the duality of $ (X_i , \ol X_i) $.
	Generate the positive, faithful `left' trace $\t {tr}_{t(\sigma_\pm)}$ on the endomorphism space $ \mcal C_\pm \left(t(\sigma_\pm),t(\sigma_\pm)\right) $ using the $ R_i$'s.
	We then have the following inner product on the space $ \mcal {NCP} (\sigma , \tau) $:
	\[
	\lab Z_S , Z_T \rab \coloneqq \t {tr}_{t(\sigma_+)} \otimes \t {tr}_{t(\sigma_-)} \left( Z_{T^* \circ S} \right) = \t {tr}_{t(\sigma_+)} \left( Z_{(T^*\circ S)_+} \right) \ \ \t {tr}_{t(\sigma_-)} \left( Z_{(T^* \circ S)_-} \right).
	\]
	Suppose $ X_i $ lies in $ \mcal C_{\vlon_i} $ for $ 1 \leq i \leq m $.
	Define $ R'_i \coloneqq \left( J^{\vlon_i} _{ \ol X_i , X_i }\right)^* \circ F(R_i ) \circ \eta^*_{\vlon_i}$ and $ \ol R'_i \coloneqq \left( J^{\vlon_i} _{X_i , \ol X_i }\right)^* \circ F(\ol R_i ) \circ \eta^*_{\vlon_i}$.
	One can easily check (using the equations satisfied by $ J^\vlon $ and $ \eta_\vlon $ in making $ F_\vlon $ a monoidal functor) that $ (R'_i , \ol R'_i) $ is a solution to the conjugate equation (possibly not standard) for the duality of $ (FX_i , F \ol X_i) $.
	Using these solutions in an obvious way, we generate the solution $ (R' , \ol R') $ to the conjugate equation for $ \left(G \sigma , F \ol X_m \otimes \cdots \otimes F \ol X_1 \right) $; for instance, $ R' \coloneqq \left(1^{\ }_{F \ol X_m \otimes \cdots \otimes F \ol X_2} \otimes R'_1 \otimes 1^{ \ }_{FX_2 \otimes \cdots \otimes FX_m}  \right) \cdots \left(1_{F \ol X_m} \otimes R'_{m-1} \otimes 1_{FX_m}  \right) R'_m$.
	Consider the positive, faithful (possibly not tracial) functional $ \vphi^{\ }_{G\sigma} \coloneqq {R'}^* \left(1_{F \ol X_m \otimes \cdots \otimes F \ol X_1} \otimes \bullet\right) R'$ on the endomorphism space $ \mcal C (G\sigma , G\sigma) $.
	A careful observation and some straight-forward calculations will tell us
	\[
	\lab Z_S , Z_T \rab = \vphi^{\ }_{G \sigma} \left( Z^{\mcal C}_{FT^* \circ FS} \right) = \vphi^{\ }_{G \sigma} \left( \left(Z^{\mcal C}_{FT} \right)^* \circ Z^{\mcal C}_{FS} \right)  = \vphi^{\ }_{G \sigma} \left( \left( G(Z_T) \right)^* \circ G (Z_S) \right)
	\]
	where the first equality can be derived by crucially using the fact that any NCP $ T' $ can be expressed as a non-crossing overlay of $ T'_+ $ and $ T'_- $.
	Thus, $ G : \mcal{NCP} (\sigma , \tau) \ra \mcal C(G \sigma , G \tau) $ is well-defined and injective as well.
	
	The remaining properties for $ G $ being a unitary tensor functor is routine to verify, and so is the condition $ F_\pm = G \circ \gamma_\pm  $.
	\vskip 1em
	We are now left to establish the uniqueness part.
	Let $ \left( H^i , \kappa^i_+ , \kappa^i_- \right)$ for $ i=1,2 $ be two triplets satisfying the conditions in the statement of this theorem.
	We need to find a unitary natural monoidal isomorphism $ \mu :H^1 \ra H^2 $ such that $ \mu_{\iota_\vlon} \circ \kappa^1_\vlon = \kappa^2_\vlon $ for $ \vlon = \pm $, and show that such a $ \mu $ is unique.
	
	The compatibility condition forces us to set
	\[
	\mu_{\iota_\vlon (X)} \ \coloneqq \  \kappa^2_{\vlon ,X} \left(\kappa^1_{\vlon ,X}\right)^* \ \in \ \mcal C \left(H^1 (\iota_\vlon (X)) \ , \ H^2 (\iota_\vlon (X)) \right) \t{ for } X \in \t {Obj} (\mcal C_\vlon) .
	\]
	
	Next, we intend to define $ \mu $ one level higher, namely, for objects in $ \mcal {NCP} $.
	Consider $ \sigma = (X_1 , \ldots , X_m) \in \t {Obj} (\mcal {NCP}) $ for $ X_j \in \t {Obj} \mcal (C_{\vlon_j}) $.
	For this we set up the following convention which will come handy in the rest of the proof.
	\vskip 1em
	\noindent\textbf{Notation:} Let $ A: \mcal D \ra \mcal E $ be a tensor functor between strict tensor categories where $ J : \otimes  \circ (A\times A) \ra A \circ \otimes $ is the natural transformation implementing the tensor preserving property of $ A $.
	For a nonempty word $\sigma =  X_1 \ \ldots \ X_m $ with letters in $ \t {Obj} (\mcal D) $, using the $ J_{\bullet , \bullet}  $'s iteratively, we may obtain a morphism in $ \mcal E $
	\[
	J_\sigma \ : \ A (X_1) \otimes \cdots \otimes A(X_m) \ \lra \ A( X_1 \otimes \cdots \otimes X_m)
	\]
	which is independent of any iterative algorithm by the commuting hexagonal diagram (in fact, a square due to strictness of $\mcal D$ and $ \mcal E $) satisfied by $ J_{\bullet , \bullet} $.
	If $ \sigma $ has length one, then set $ J_\sigma \coloneqq 1_{A(X_1)} $.
	Note that $ J_\sigma  $ is natural in the letters of $ \sigma $.
	For $ \sigma = \sigma_1 \ \ldots\ \sigma_n $ (where $ \sigma_j $'s are nonempty subwords of $ \sigma $), we have the formula:
	\begin{equation}\label{Jformula}
	J_\sigma = J_{t(\sigma_1) \ \ldots \ t(\sigma_n)} \left( J_{\sigma_1} \otimes \cdots \otimes J_{\sigma_n} \right)
	\end{equation}
	where $ t(\cdot) $ continues to denote tensoring the letters from left to right.
	\vskip 1em
	For $ \vlon = \pm $, $ i=1,2 $, let $ J^\vlon_{\bullet , \bullet} $ and $ J^i_{\bullet , \bullet} $ denote the natural unitaries implementing the tensor preserving property of $ F_\vlon $ and $ H^i $ respectively.
	As objects in $ \mcal {NCP} $, we have $ \sigma =  \gamma_{\vlon_1} (X_1) \otimes \cdots \otimes \gamma_{\vlon_m} (X_m)$.
	On both sides, applying $ \alpha^{\mcal {NCP}} :\mcal {NCP} \ra \mcal C_+ \ast \mcal C_-$ (which we denote simply by $ \alpha $ for notational convenience), we get $ \alpha (\sigma) = \iota_{\vlon_1} (X_1) \otimes \cdots \otimes \iota_{\vlon_m} (X_m)$ since
	$ \alpha $ is trivially monoidal.
	Since we are looking for a monoidal $ \mu : H^1 \ra H^2 $, the only potential candidate must be defined as:
	\begin{align*}
		\mu_{\alpha(\sigma)} \coloneqq J^2_{\iota_{\vlon_1} (X_1) \ \ldots \ \iota_{\vlon_m} (X_m)} \left(\mu_{\iota_{\vlon_1} (X_1)} \otimes \cdots \otimes \mu_{\iota_{\vlon_m} (X_m)} \right)& \left(J^1_{\iota_{\vlon_1} (X_1) \ \ldots \ \iota_{\vlon_m} (X_m)}\right)^*\\
		& \in \mcal C \left( H^1 (\alpha (\sigma)) , H^2 (\alpha (\sigma)) \right) 
	\end{align*}
	for $ \sigma \in \t {Obj} (\mcal {NCP}) $.
	Clearly, if length of $ \sigma $ is $ 1 $, $ \mu_{\alpha(\sigma)} $ matches with the one defined before, that is, $ \mu_{\alpha \gamma_\vlon (\bullet)} = \mu_{\iota_\vlon (\bullet)}  $.
	
	It is clear (using \Cref{Jformula}) from the definition that $ \mu_\alpha : H^1 \circ \alpha \ra H^2 \circ \alpha $ is a monoidal unitary but we still need to check naturality of $ \mu_\alpha $, that is, for $ T \in NCP (\sigma ,\tau) $, we need to prove $ \mu_{\alpha(\tau)} \circ H^1 (\alpha (Z_T)) = H^2 (\alpha (Z_T)) \circ \mu_{\alpha(\sigma)} $.
	By \Cref{elemncpgen}, it is enough to obtain $ \mu_{\alpha(\tau)} \circ H^1 (\alpha(Z_T)) = H^2 (\alpha (Z_T)) \circ \mu_{\alpha(\sigma)} $ for elementary $ T \in NCP(\sigma , \tau) $.
	
	Let $ T $ be as in \Cref{elemncp} where the block partition $ (\sigma_j , \tau_j) $ is labelled by $ f_j \in \mcal C_{\vlon_j} (t(\sigma_j) , t(\tau_j)) $ for $ 1 \leq j \leq n  $.
	Let $ T_j $ denote the $ (\sigma_j , \tau_j) $-NCP with a single block partition labeled with $ f_j $.
	Then, $ Z_T = Z_{T_1} \otimes \cdots \otimes Z_{T_n} $ implying $ \alpha(Z_T) = \alpha(Z_{T_1}) \otimes \cdots \otimes \alpha(Z_{T_n}) $.
	Suppose $ \sigma_j = (X^j_1, \ldots , X^j_{k_j}) $ and $ \tau_j = (Y^j_1, \ldots , Y^j_{l_j}) $ for $ 1 \leq j \leq n $; here we are assuming that all $ \sigma_j $'s and $ \tau_j $'s are nonempty.
	Thus,
	\[
	H^1(\alpha (Z_T)) = J^1_{\alpha (\tau_1)  \ \ldots \  \alpha (\tau_n)} \left[H^1(\alpha (Z_{T_1})) \otimes\cdots \otimes H^1 (\alpha (Z_{T_n}))\right] \left( J^1_{\alpha (\sigma_1)  \ \ldots \   \alpha (\sigma_n)} \right)^* \ .
	\]
	Using \Cref{Jformula}, $ \mu_{\alpha(\tau)} \circ H^1 \alpha(Z_T)$ can be expressed as
	\begin{equation}\label{munat1}
	\begin{split}
	J^2_{\iota_{\vlon_1} (Y^1_1) \ \ldots \ \iota_{\vlon_n} (Y^n_{l_n})} \left[ \bigotimes^n_{j=1} \ \underbracket{\left( \mu_{\iota_{\vlon_j} (Y^j_1) }  \otimes \cdots \otimes \mu_{\iota_{\vlon_j} (Y^j_{l_j}) } \right) \left(J^1_{\iota_{\vlon_j} (Y^j_1) \ \ldots \ \iota_{\vlon_j} (Y^j_{l_j})} \right)^* H^1 (\alpha (Z_{T_j}))}\right]\\
	\left( J^1_{\alpha (\sigma_1) \  \ldots \   \alpha (\sigma_n)} \right)^* \ .
	\end{split}
	\end{equation}
	Now, $ Z_{T_j} = \left(J^{\gamma_{\vlon_j}}_{Y^j_1 \ \ldots \ Y^j_{l_j}}\right)^* \ \gamma_{\vlon_j} (f_j) \ J^{\gamma_{\vlon_j}}_{X^j_1 \ \ldots \ X^j_{k_j}} $.
	Thus, the $ j $-th tensor component (underlined) in the middle of the expression \ref{munat1} becomes
	\begin{equation}\label{munat2}
	\begin{split}
	\left(\kappa^2_{\vlon_j, Y^j_1} \otimes \cdots \otimes \kappa^2_{\vlon_j, Y^j_{l_j}}\right) \left(\kappa^1_{\vlon_j, Y^j_1} \otimes \cdots \otimes \kappa^1_{\vlon_j, Y^j_{l_j}}\right)^* \ \left(J^1_{\iota_{\vlon_j} (Y^j_1) \ \ldots \ \iota_{\vlon_j} (Y^j_{l_j})} \right)^* \ \left[H^1 \alpha \left(J^{\gamma_{\vlon_j}}_{Y^j_1 \ \ldots \ Y^j_{l_j}}\right)\right]^* \\
	H^1 \left(\iota_{\vlon_j} (f_j)\right) \ H^1 \alpha \left( J^{\gamma_{\vlon_j}}_{X^j_1 \ \ldots \ X^j_{k_j}} \right)
	\end{split}
	\end{equation}
	From the monoidal property of $ \kappa^i_\vlon : F_\vlon \ra H^i \circ \iota_\vlon$, we get another formula
	\begin{equation}\label{kmonoid}
	H^i \alpha \left(J^{\gamma_\vlon}_{Z_1 \ \ldots \ Z_n} \right) \ J^i_{\iota_\vlon (Z_1) \ \ldots \ \iota_\vlon (Z_n)} \ \left(\kappa^i_{\vlon , Z_1} \otimes \cdots \otimes \kappa^i_{\vlon , Z_n} \right) = \kappa^i_{\vlon , Z_1 \otimes \cdots \otimes Z_n } \ J^\vlon_{Z_1 \ \ldots \ Z_n} \ .
	\end{equation} 
	Applying Formula \ref{kmonoid} twice (namely, for $ i=1,2 $) on the expression \ref{munat2}, we get
	\begin{align*}
		&\ 
		\left(J^2_{\iota_{\vlon_j} (Y^j_1) \ \ldots \ \iota_{\vlon_j} (Y^j_{l_j})} \right)^* \ \left[H^2 \alpha \left(J^{\gamma_{\vlon_j}}_{Y^j_1 \ \ldots \ Y^j_{l_j}}\right)\right]^* \left(\kappa^2_{\vlon_j ,t(\tau_j)}\right) \left(\kappa^1_{\vlon_j ,t(\tau_j)}\right)^* \ H^1 \left(\iota_{\vlon_j} (f_j)\right) \\
		&  \hspace{13cm} H^1 \alpha \left( J^{\gamma_{\vlon_j}}_{X^j_1 \ \ldots \ X^j_{k_j}} \right)\\
		=& \ 
		\left(J^2_{\iota_{\vlon_j} (Y^j_1) \ \ldots \ \iota_{\vlon_j} (Y^j_{l_j})} \right)^* \ \left[H^2 \alpha \left(J^{\gamma_{\vlon_j}}_{Y^j_1 \ \ldots \ Y^j_{l_j}}\right)\right]^* \ H^2 \left(\iota_{\vlon_j} (f_j)\right) \ \left(\kappa^2_{\vlon_j ,t(\sigma_j)}\right) \left(\kappa^1_{\vlon_j ,t(\sigma_j)}\right)^* \\
		&  \hspace{13cm}  H^1 \alpha \left( J^{\gamma_{\vlon_j}}_{X^j_1 \ \ldots \ X^j_{k_j}} \right)\\
	\end{align*}
	where the last equality follows from naturality of $ \kappa^i_{\vlon_j} $.
	Again applying Formula \ref{kmonoid} twice and the equation $ \mu_{\iota_{\vlon} (Z) }= \kappa^2_{\vlon,Z} \left( \kappa^1_{\vlon,Z} \right)^*$, the last expression becomes
	\begin{equation*}
		\begin{split}
			\left(J^2_{\iota_{\vlon_j} (Y^j_1) \ \ldots \ \iota_{\vlon_j} (Y^j_{l_j})} \right)^* \ \left[H^2 \alpha \left(J^{\gamma_{\vlon_j}}_{Y^j_1 \ \ldots \ Y^j_{l_j}}\right)\right]^* \ H^2 \left(\iota_{\vlon_j} (f_j)\right) \ H^2 \alpha \left( J^{\gamma_{\vlon_j}}_{X^j_1 \ \ldots \ X^j_{k_j}} \right) \ J^2_{\iota_{\vlon_j}(X^j_1) \ \ldots \ \iota_{\vlon_j}(X^j_{k_j}) } \\
			\left(\mu_{\iota_{\vlon_j} (X^j_1)} \otimes \cdots \otimes \mu_{\iota_{\vlon_j} (X^j_{k_j})} \right) \left( J^1_{\iota_{\vlon_j}(X^j_1) \ \ldots \ \iota_{\vlon_j}(X^j_{k_j}) } \right)^* \ 
		\end{split}
	\end{equation*}
	which turns out to be
	\begin{equation}\label{munat3}
	\left(J^2_{\iota_{\vlon_j} (Y^j_1) \ \ldots \ \iota_{\vlon_j} (Y^j_{l_j})} \right)^* \ H^2 \alpha \left( Z_{T_j} \right) \ \mu_{\alpha (\sigma_j)} \ .
	\end{equation}
	Replacing the underlined part in expression \ref{munat1} by \ref{munat3}, we get
	\begin{align*}
		& \ J^2_{\iota_{\vlon_1} (Y^1_1) \ \ldots \ \iota_{\vlon_n} (Y^n_{l_n})} \left[ \bigotimes^n_{j=1} \left(J^2_{\iota_{\vlon_j} (Y^j_1) \ \ldots \ \iota_{\vlon_j} (Y^j_{l_j})} \right)^* \ H^2 \alpha \left( Z_{T_j} \right) \ \mu_{\alpha (\sigma_j)} \right]\ 
		\left( J^1_{\alpha (\sigma_1) \  \ldots \   \alpha (\sigma_n)} \right)^* \\
		= & \ J^2_{\alpha(\tau_1) \ \ldots \ \alpha(\tau_n)} \left[ \bigotimes^n_{j=1} \ H^2 \alpha \left( Z_{T_j} \right) \ \mu_{\alpha (\sigma_j)} \right]\
		\left( J^1_{\alpha (\sigma_1) \  \ldots \   \alpha (\sigma_n)} \right)^* \ \t { (using Formula \ref{Jformula})} \\
		= & \ J^2_{\alpha(\tau_1) \ \ldots \ \alpha(\tau_n)} \left[ \bigotimes^n_{j=1} \ H^2 \alpha \left( Z_{T_j} \right) \right]\ \left[ \bigotimes^n_{j=1} \ \mu_{\alpha (\sigma_j)} \right]\
		\left( J^1_{\alpha (\sigma_1) \  \ldots \   \alpha (\sigma_n)} \right)^*\\
		= & \ J^2_{\alpha(\tau_1) \ \ldots \ \alpha(\tau_n)} \left[ \bigotimes^n_{j=1} \ H^2 \alpha \left( Z_{T_j} \right) \right]\
		\left( J^2_{\alpha (\sigma_1) \  \ldots \   \alpha (\sigma_n)} \right)^* \ \mu_{\alpha(\sigma)} \ \t{ (since $ \mu_\alpha $ is monoidal)}\\
		= & \ H^2 \alpha \left( Z_{T_1} \otimes \cdots \otimes Z_{T_n}  \right) \ \mu_{\alpha(\sigma)} \ = \ \ H^2 \alpha \left( Z_T \right) \ \mu_{\alpha(\sigma)} \ . 
	\end{align*}
	\vskip 1em
	Finally, we have obtained a unitary natural monoidal isomorphism $ \mu_\alpha : H^1 \alpha \ra H^2 \alpha $ such that $ \mu_{\alpha \gamma_\vlon} \ \kappa^1_\vlon = \kappa^2_\vlon $.
	For defining $ \mu $ in the general form, consider $ (\sigma , p) \in \t{Obj} (\mcal{NCP})$ for $ \sigma \in \t{Obj}(\mcal{NCP}) $ and projection $ p \in \mcal{NCP} (\sigma,\sigma) $.
	Here also, there is only one choice (by naturality), namely
	\[
	\mu_{(\sigma , p)} \ \coloneqq \ H^2 (p) \ \mu_\sigma \ H^1 (p) \ \in \ \mcal C \left(H^1 (\sigma , p)) , H^2 (\sigma , p) \right) \ .
	\]
	Since $ \alpha $ is trivially monoidal, it almost comes for free that $ \mu $ is a unitary natural monoidal isomorphism compatible with $ \kappa^i_\vlon $ for $ i=1,2 $, $ \vlon = \pm $.
	Note that in our construction of $ \mu $, there is a unique choice at each stage.
	Hence, $ \mu $ has to be unique.
\end{proof}

\section{The free oriented extension of subfactor planar algebras}\label{FOE}

We will call a $ \Lambda $-oriented planar algebra simply an \textit{oriented planar algebra} if $ \Lambda $ is singleton.
This not only simplifies the terminology, but also agrees with the definition presented in \cite[Definition 1.2.7]{J1}.
In this section we will study oriented factor planar algebras and their relation to subfactor planar algebras.
Throughout this section, whenever we talk about oriented planar algebra, we assume $ \Lambda := \{ + \} $ and $ \ol \Lambda := \{ - \} $.
So, $ W = W_\Lambda $ will be the set of words with letters from $ \{\pm\} $ (note this $\pm$ has nothing to do with $\pm$ discussed in the preliminaries concerning free products).
Since $ \Lambda $ is a singleton, we do not label any of the (oriented) strings of any $ \Lambda $-oriented tangle; rather, we assume that each string is labeled with $ + $.
With this convention, a marked point on the external disc is assigned $ + $ or $ - $ according as the string attached to it has orientation towards or away from the point; for marked points on the internal discs, the convention is just the opposite.

In this context, it is important to talk about Jones' \textit{subfactor planar algebras} (\cite{J}).
We briefly recall the definitions.
Let $W_{\t {alt}} $ denote the set  of words having even length with $ + $ and $ - $ appearing alternately.
Then, one can consider oriented tangles where the colors of internal and external discs must belong to $ W_{\t {alt}} $ such that it is possible to put a checker-board shading; such tangles are called \textit{shaded tangles}.
If the color of the external or an internal disc in a shaded tangle is $ \varnothing $, then the region attached to boundary of the disc could be unshaded or shaded; we specify this by renaming the color of the disc as $ +\varnothing$ or $ -\varnothing $ respectively.
Let $ W_{ \t {alt}} $ contain the elements $ \pm \varnothing $ as well.
A subfactor planar algebra consists of a family of vector spaces $ \{P_w\}_{w\in W_{ \t {alt}}} $ on which the shaded tangles act satisfying properties analogous to that in Definitions \ref{opa} and \ref{pa types}.
In the original definition, Jones indexed the vector spaces by $ \{\vlon k : \vlon \in \{ \pm \}, k\in \N\} $ instead of $ W_{\t {alt}} $ (where $ \vlon k $ corresponds to the word of length $ 2k $ with alternate letters $ \pm $, beginning with $ \vlon $).

Given a oriented factor planar algebra $ Q $, since shaded tangles can be thought of as oriented tangles by simply forgetting the shading, we can canonically construct a subfactor planar algebra called its \textit{shaded part of $ Q $}, denoted $ \mcal{S}(Q) $.
For $w\in W_{alt}$, $\mcal{S}(Q)_{w}:=Q_{w}$.
The action of shaded tangles is simply the action of the associated oriented tangle.

Let  $\mcal{P}_{or}$ denote the category whose objects are oriented factor planar algebras and whose morphisms are $*$-planar algebra morphisms. Similarly, let $\mcal{P}_{sh}$ denote the category whose objects are subfactor planar algebras, and whose morphisms are $*$-planar algebra morphisms. Obviously the assignment $Q\mapsto \mcal{S}(Q)$ induces a functor $\mcal{S:}\mcal{P}_{or}\rightarrow \mcal{P}_{sh}$. 

\begin{defn} The functor $\mcal{S}:\mcal{P}_{or}\rightarrow \mathcal{P}_{sh}$, $Q\mapsto \mcal{S}(Q)$ is called the \textit{shading functor}.
\end{defn}

To understand this functor on the level of von Neumann algebras, an oriented factor planar algebra $Q$ corresponds to the rigid C*-tensor category generated by a single bimodule $\mcal H$ of a $\rm{II}_{1}$ factor $N$.
Taking alternating tensor powers of $\mcal H$ and $\overline{\mcal H}$ gives the standard invariant for the subfactor $N\subseteq M$, where $M$ is the $\rm{II}_{1}$ factor associated to the $Q$-system $\mcal H\otimes_{N} \overline{\mcal H}\in \t {Bim}(N)$.
The standard invariant $\mcal{S}(Q)$ is precisely the standard invariant of this subfactor.
Note that we cannot recover tensor powers of $\mcal H$ from this information.
In other words, the sufactor standard invariant forgets information. We are led naturally to the following definition.

\begin{defn}\label{oe defn}
Let $ P $ be a subfactor planar algebra. An \textit{oriented extension of $ P $} is a oriented factor planar algebra $ Q $ such that $ \mcal{S}(Q)$ is $ * $-isomorphic to $ P $.
	
\end{defn}

The set of (isomorphism classes of) oriented extensions of a subfactor planar algebra is precisely its pre-image under the functor $\mcal S$.

\begin{rem}
Note that subfactor planar algebras correspond to rigid, semisimple C*-2-categories $ \mcal B $ with two 0-cells $ \{+,-\} $ such that (i) tensor units in $ \mcal B_{++} $ and $ \mcal B_{--} $ are simple and (ii) there is a 1-cell $ \rho \in \mcal B_{+-} $ which tensor-generates the whole of 2-category $ \mcal B $.
If we call such 2-categories as \textit{singly generated}, with this correspondence one can also define oriented extension of the singly generated 2-category $ \mcal B $ as a singly generated C*-tensor category $ \mcal C $, generated by $ \sigma $, such that the underlying 2-category generated by $ \sigma $ is equivalent to $ \mcal B $.
\end{rem}

\subsection{The free oriented extension}
The first obvious question is whether an oriented extension always exists. We answer this affirmatively, by constructing a canonical one, called the \textit{free oriented extension}. 

To proceed with this construction, let $P$ be a subfactor planar algebra.
For every word $ w \in W $ let $ D_w $ be the set of oriented tangles in which the color of the external disc is $ w $ and all the internal discs (if any) have their colors in $ W_{ \t {alt}} $ along with a labelling of each internal disc with an element in the corresponding $ P $-space.
Note that the tangles are arbitrary oriented tangles such that the boundary conditions along any disc are alternating, but these tangles \textit{do not} need to admit a checker-board shading.
A typical element of $ D_w $ will be denoted by $ T(x_1, \ldots , x_n) $ where $ T $ is the unlabelled tangle with internal discs (if any) $ D_1, \ldots , D_n $ having colors $ w_1, \ldots,w_n $, labelled with $ x_1 \in P_{w_1} , \ldots , x_n \in P_{w_n} $ respectively.
An example of such an element with colors of internal discs $ w_1=+-+-, w_2= +-+-, w_3= -+-+, w_4=-+ $ and $ x_i \in P_{w_i}, i=1,2,3,4 $ is given in the following figure.
Note that, in this example, the external disc has $ 6 $ marked points on it and its color is $w_0= -+--++ $.
\begin{figure}[h]
	\psfrag{a}{$ \bigstar$}
	\psfrag{b}{$ \bigstar$}
	\psfrag{c}{$ \bigstar $}
	\psfrag{d}{$ \bigstar $}
	\psfrag{e}{$ x_1 $}
	\psfrag{f}{$ x_3 $}
	\psfrag{g}{$ x_2 $}
	\psfrag{h}{$ x_4 $}
	\psfrag{i}{$ \bigstar $}
	\psfrag{j}{$ D_0 $}
	\includegraphics[scale=0.4]{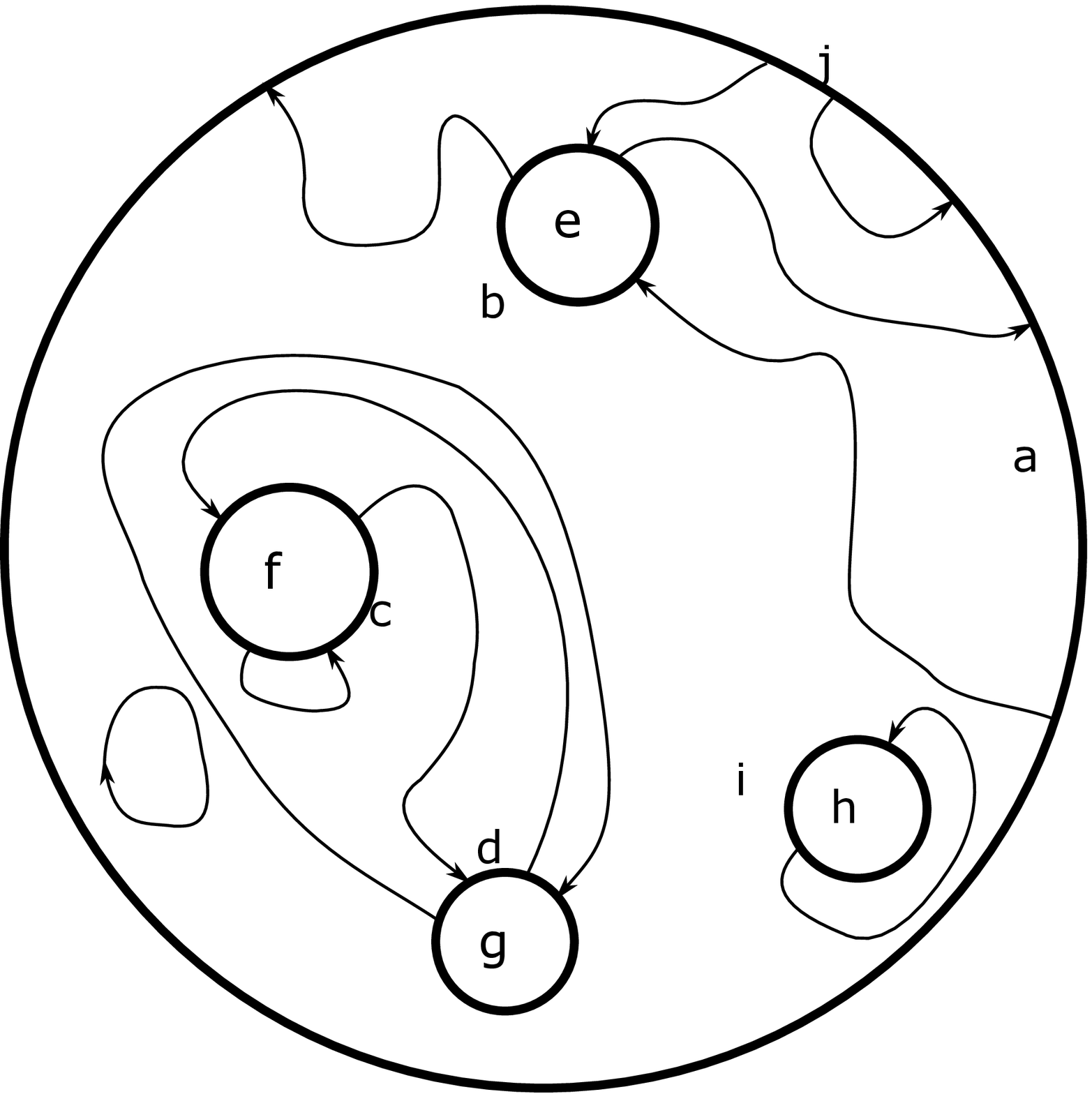}
\end{figure}

Set $ \mcal D_w := \C\text{-span}(D_w) $.
Note that we have an involutive map $ D_w \ni T(x_1, \ldots , x_n) \os * \longmapsto T^* (x^*_1, \ldots , x^*_n) \in D_{w^*} $; we extend this conjugate linearly to get an involution $ *: \mcal D_w \ra \mcal D_{w^*} $.
Observe that $\mcal D := \{\mcal D_w\}_{w\in W} $ is an oriented $ * $-planar algebra where the action of tangles on labelled tangles comes simply from composition.
However, $ \mcal D $ is far from being a oriented factor planar algebra, since at this point, the spaces $\mcal{D}_{w}$ are all infinite dimensional

In order to define a sesquilinear form on each of $ \mcal D_w $, we first define an evaluation map associated to an oriented tangle with external disc having color $ \varnothing $ and all internal discs having colors in $ W_{\t {alt}} $ labelled by elements of $P$; such labeled tangles will be referred as `networks' (see \cite{J}).
Topologically, a network $ N $ is a disjoint union of its connected components, and each component necessarily consists of closed, shaded, P-labelled tangles. These can be nested in the disc in complicated ways. However, for each connected component, one can forget the rest of the network and think of the tangle as being a closed shaded tangle with elements from $P$.
Now, $ P_{\pm \varnothing} $ are one dimensional algebras, and thus we can associate a scalar to each closed network.
For a network $N$, define $ Z(N) $ to be the product of the scalars arising from each connected component shaded tangle.
We call this the \textit{partition function}. Note that by construction, since $P$ is a spherical subfactor planar algebra this partition function is also spherical.

We define a sesquilinear form on $\mcal{D}_{w}$ by 
\[[X,Y]_{w}:=Z\left({H_w \circ (S , T^* )} (x_1, \ldots,x_m , y^*_1,\ldots,y^*_n)\right)\]

\noindent for $X= S(x_1, \ldots,x_m) ,\ Y=T(y_1,\ldots,y_n) \in D_{w}\subseteq \mcal{D}_{w}$, where $ H_w $ is the inner product tangle defined in Section \ref{prelim}. The following lemma is a crucial step in our construction.

\begin{lem}\label{positivity}
For all $w\in W$,  $[X,X]_w \geq 0$ for all $ X \in \mcal D_w $.
\end{lem}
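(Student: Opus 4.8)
The plan is to prove positivity not by manipulating the form $[\cdot,\cdot]_w$ directly, but by realizing it as the pullback of a genuine Hilbert-space inner product along a form-preserving linear map. The obstruction to a naive inductive argument is exactly that a general element of $D_w$ is an oriented tangle that need not admit any checkerboard shading, and that $Z$ is computed as a \emph{product} of scalars over the connected components of a closed network; so there is no transparent way to exhibit $[X,X]_w$ as a sum of squares from $P$ alone. Passing to an ambient \emph{positive} oriented extension circumvents this entirely.

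First I would produce a positive oriented extension of $P$. Realize $P$ by bimodules exactly as in the introduction: there is a $\rm{II}_1$ factor $N$ and a bifinite $N$-$N$ bimodule $\mcal H$ whose associated oriented factor planar algebra $Q$ satisfies $\mcal S(Q)\cong P$, i.e. $Q$ is an oriented extension of $P$ (existence follows from the realization of subfactor planar algebras; cf. \cite{P,BHP}). Because $Q$ is the oriented planar algebra of bifinite bimodules over a $\rm{II}_1$ factor, its box spaces carry genuine inner products and $Q$ is a \emph{positive} $*$-planar algebra; in particular the form $\langle a,b\rangle_{Q_w}:=Z_Q\big(H_w\circ(a,b^*)\big)$ is positive semidefinite on each $Q_w$, where $Z_Q$ denotes the partition function of $Q$.

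Next I would define a linear map $\Phi_w\colon \mcal D_w\to Q_w$ by sending a labelled tangle $T(x_1,\dots,x_n)\in D_w$ to $Q_T(\tilde x_1,\dots,\tilde x_n)$, where each internal disc color lies in $W_{\t{alt}}$ and $\tilde x_i\in Q_{w_i}=\mcal S(Q)_{w_i}$ is the image of $x_i\in P_{w_i}$ under the fixed isomorphism $\mcal S(Q)\cong P$, and $Q_T$ is the action of the oriented tangle $T$ in $Q$; this is well defined since $T$ is a legitimate oriented tangle whose internal boxes are shaded. The crux is then the identity $[X,Y]_w=\langle \Phi_w(X),\Phi_w(Y)\rangle_{Q_w}$ for all $X,Y\in\mcal D_w$. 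For $X=S(x_\bullet)$ and $Y=T(y_\bullet)$ both sides are the evaluation of the \emph{same} closed oriented network $H_w\circ(S,T^*)$, the left by the partition function $Z$ built from $P$ and the right by $Z_Q$. Since $Q$ is connected, $Z_Q$ is multiplicative over connected components, as is $Z$; and every connected component of this closed network is a shaded $P$-labelled tangle on which $Z_Q$ and $Z$ agree, precisely because $\mcal S(Q)\cong P$ identifies their $\pm\varnothing$-evaluations. Hence the two partition functions coincide on this network and the identity follows.

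The main obstacle — and the step deserving the most care — is this matching of partition functions: one must check that every connected component of $H_w\circ(S,T^*)$ is genuinely shadeable (so that $Z$ applies to it), that $Z$ and $Z_Q$ are both multiplicative over components, and that they agree componentwise under the chosen $*$-isomorphism $\mcal S(Q)\cong P$ (here sphericity of $P$, hence of $Z$, guarantees well-definedness independent of the planar representative chosen). Granting the identity, positivity is immediate: $[X,X]_w=\langle\Phi_w(X),\Phi_w(X)\rangle_{Q_w}=\norm{\Phi_w(X)}^2\ge 0$. Note that $\Phi_w$ need not be injective — its kernel is exactly the null space of the form, whose quotient will later define the free oriented extension — so only form-preservation, and not injectivity, is required here.
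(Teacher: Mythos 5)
Your argument has a genuine circularity problem relative to the paper: it presupposes the existence of a positive oriented extension $Q$ of $P$, but the existence of \emph{any} oriented extension is precisely the question that the free oriented extension construction (of which this lemma is the crucial step) is designed to answer --- the paper says so explicitly at the start of Section 3.1. Your justification for producing $Q$ (``existence follows from the realization of subfactor planar algebras'') elides the real difficulty. Popa's reconstruction theorem yields a subfactor $N\subseteq M$ with standard invariant $P$, but to obtain a single $N$-$N$ bimodule whose generated oriented planar algebra has shaded part $P$ one needs, as the introduction explains, an isomorphism $\varphi:N\to M$, and one must then actually verify that $\mcal S(OP^{\varphi})\cong P$ --- a nontrivial check which the paper defers to Burns and to Das--Ghosh--Gupta in Section 3.5, and discusses only in the hyperfinite setting. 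Neither point is automatic: a generic realization need not have $N\cong M$, and it is not known that every subfactor planar algebra arises from a hyperfinite subfactor. One could patch the existence of $Q$ with heavy machinery (finite depth via Popa's hyperfinite realization; infinite depth via the Guionnet--Jones--Shlyakhtenko construction, where $N\cong M\cong L(\F_\infty)$), but as written the existence of $Q$ is asserted rather than established, and this route makes the positivity underlying the free construction depend on deep operator-algebraic input rather than on $P$ alone.

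Granting a positive oriented extension $Q$, the rest of your argument is sound: your $\Phi_w$ is exactly the map $\widehat\psi_w$ that the paper later uses in the proof of Theorem \ref{free con any}, and the identity $[X,Y]_w=\langle\Phi_w(X),\Phi_w(Y)\rangle_{Q_w}$ holds for the reason you give (both partition functions are multiplicative over connected components, each component is a closed shaded $P$-labelled network, and the two evaluations agree there via $\mcal S(Q)\cong P$). The paper instead gives a self-contained combinatorial proof internal to $P$: discard networks, use rotation invariance to reduce to words starting and ending with the same sign, decompose $w$ into maximally alternately signed sub-words, prove that every networkless labelled tangle in $D_w$ is a unique overlay of tangles over $w_{\t{odd}}$ and $w_{\t{even}}$, and conclude positivity and finite-dimensionality by induction on the length of $w$, with positivity of the shaded form as the base case. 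If you wish to pursue your route you must first prove the existence of an oriented extension by independent means --- at which point the free construction's answer to that existence question becomes redundant.
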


\begin{proof}
Without loss of generality, we may assume that in the decomposition $ X = \us {i=1} {\os k \sum} c_i X_i $ with respect to the canonical basis $ D_w $, none of the $ X_i $'s  contain any non-empty network, that is, the union of the strings and the boundary of the discs (internal and external) is connected in each $ X_i $.
This automatically settles the case of $ w =\varnothing$.
Moreover, if $\varnothing \neq w \in W_{\t {alt}}$,
then all $ X_i $ are shaded and thereby the positivity of the $ P $-action implies $ [\cdot, \cdot]_w $ is positive semi-definite.

From now on, we will assume $ w\in W \setminus W_{\t {alt}} $.
Recall that $ D_w $ is the set of oriented tangles in which the color of external disc is $ w $ and all the internal discs (if any) have their colors in $ W_{\t{alt}} $.
Since all the internal discs (a)  have even number of marked points, and (b) have colors with equal number of $ + $ signs and $ - $ signs (as they belong to $ W_{\t{alt}} $), in order to have $ \mcal D_w\neq \{0\} $ (that is, $ D_w \neq \emptyset$,) the word $ w $ must be of even length with the same number of $ + $ signs and $ - $ signs.
Again, since $ w$ has same number of $ + $ signs and $ - $ signs and $ w \in W \setminus W_{\t {alt}} $, $ w $ must have a sub-word (in the non-consecutive sense) of the form $ (+,+) $ or $ (-,-)$, so it will be enough to consider the case when $ w  $ starts and end with same sign.
This is because the sesquilinear form is invariant under the action of rotation tangle.
More precisely, if $ w = (w_1,w_2) $ and $ \rho_{w_1,w_2} : (w_1 w_2) \ra (w_2 w_1) $ denotes the rotation tangle as described in the preliminaries, then $ [X,Y]_w = [\rho_{w_1,w_2} (X) , \rho_{w_1,w_2} (Y)]_{(w_2,w_1)} $ for all $ X,Y \in \mcal D_w $. Let 
$$ W' := \{w \in W \setminus W_{\t {alt}} : w \t { starts and ends with the same sign and } D_w \neq \emptyset\}. $$

\noindent Every $ w \in W' $ can be expressed as a unique concatenation $ w_1 w_2 \ldots  w_k $ of consecutive sub-words, where each $ w_i $ is a word with $ \pm $ appearing alternately such that the last sign of the sub-word $w_{i}$ matches with the first one of $w_j$; we will refer these special sub-words as `MAS' (which stands for \textit{maximally alternately signed}).

Observe that (i) each MAS sub-word of even length has equal number of $ + $ signs and $ - $ signs and (ii) each MAS sub-word of odd length starting and ending in $ + $ (resp. $ - $) has a $ + $ (resp. $ - $) more in number than that of $ - $ (resp. $ + $).
Since $ w $ has the same number of $ + $ signs and $ - $ signs, it follows that

\begin{enumerate}

\item the number of MAS sub-words of $ w $ with odd length starting and ending with $ + $ must be the same as that with $ - $; in particular, the number of odd length MAS sub-words must be even.
\\

\item there must be at least one MAS sub-word of even length.

\end{enumerate}

We will now prove that the total number of MAS sub-words of any $ w \in W' $ must be even.
This is clearly true if all the MAS sub-words of $ w $ have even length since $ w $ starts and end with the same sign.
So, let us assume $ w \in W' $ has both odd and even length MAS sub-words.
It will be useful to take a disc and arrange the signs in $ w $ as marked points on the boundary moving clockwise.
Note that if the last sign of any odd length MAS sub-word (a) differs from or (b) matches with the first sign of the very next odd length MAS sub-word moving clockwise, then the number of even length MAS sub-words in between must be (a) odd or (b) even respectively.
By (1) above, the number of instances of the case (a) is even.
Thus, in the end, the total number of even length MAS sub-words is even and so is the number of MAS sub-words.

Now, fix a $ w\in W' $.
Let $ w=w_1, \ldots,w_{2k} $ be the MAS sub-word decomposition.
Set $ w_{\t {odd}} := w_1w_3\ldots w_{2k-1} $ and $ w_{\t {even}} := w_2 w_4 \ldots w_{2k} $. We have the following assertion.

\medskip

\noindent\textbf{Assertion.} Every $X = T(x_1,\ldots,x_n) \in D_w $ which does not contain any non-empty network, can be expressed uniquely as an overlay of labelled tangles $ X_{\t {odd}} \in D_{w_{\t {odd}}} $ and $ X_{\t {even}} \in D_{w_{\t {even}}} $.\\

\noindent\textit{Proof of the assertion.}
First we consider the case in which there is no internal disc inside $ X =T \in D_w $. Then $T$ is a Temperley-Lieb (TL) diagram with color of the external disc being $ w \in W' $.
Note that any such TL diagram induces a non-crossing pairing of opposite signs in $ w $ exhausting all the signs; this puts a further restriction that two opposite signs coming from two MAS sub-words which are adjacent around the disc, can never be paired.

What we need to show is that two opposite signs can be paired only if either both belong to two even-indexed MAS sub-words or two odd-indexed ones.
We use induction on the length of $ w$.
The minimum length of elements in $ W' $ is $ 4 $ and there are exactly two words, namely, $ + - - + $ and $ - + + -  $.
In both instances, there are exactly two MAS sub-words.
Thus each MAS sub-word should be paired within itself.

For the inductive step, suppose $ w $ has length $ 2n $.
Let $ w= w_1 \ldots w_{2k} $ be the MAS sub-word decomposition.
In $ w $ there must exist two consecutive signs (namely, $ + -  $ or $  - +  $) which are paired by the TL diagram $ T $; let us denote this sub-word by $ v $.
Clearly, this $ v $ must appear in a MAS sub-word, say $ w_j $.
Let $ w' $ (resp., $ w'_j $) denote the word obtained by removing $ v $ from $ w $ (resp. $ w_j $),  and $ T' $ be the corresponding $ w' $-TL diagram obtained from $ T $.
If $ v $ is strictly smaller that $ w_j $, then we have the MAS sub-word decomposition $ w' = w_1 \ldots w_{j-1} w'_j w_{j+1} \ldots w_{2k} $.
Since $ \abs{w'} < \abs w $, using the inductive hypothesis, we can express $ T' $ as an overlay; we simply attach the pairing of $ v $ at the appropriate place to get the overlay of $ T $.
If  $ v = w_j $, then the MAS sub-word decomposition becomes $ w' = w_1 \ldots  w_{j-2} w''_{j-1} w_{j+2}\ldots w_{2k}) $ where $ w''_{j-1} = w_{j-1} w_{j+1} $.
By inductive hypothesis on $ T' $, we see that no pairing can occur between an even-indexed MAS sub-word of $ w' $ and an odd-indexed one; so the same holds for $ T $ as well.
For the case when $ v$ is $w_1$ or $ w_{2k} $, we simply apply a rotation to make $v$ interior and use the same argument.

For the general case, we assume $X =  T (x_1, \ldots,x_n) $ has internal disc(s).
We say two internal discs $ D_i $ and $ D_j $ of $ T $ are \textit{related} if there is a sequence of internal discs starting with $ D_i $ and ending with $ D_j $ such that any two consecutive internal discs in the sequence are connected by a string.
Since there is no non-empty network in $ X $, this clearly becomes an equivalence relation.
Fixing an equivalence class, we could use isotopy to bring all the internal discs in the class along with the strings connecting them inside a new disc whose boundary is intersected by the strings connecting these internal discs with the external one. The interior of this new disc is a labelled shaded tangle.
Without loss of generality, we may assume that all strings emanating from every internal disc in $ T $ go to the external one which has color $ w $.
By composing $ T $ with TL diagrams in all its internal disc, we get a TL diagram.
Since the assertion holds for TL diagrams, we may conclude that if a string from an internal disc connects to a sign in an odd (resp., even) indexed MAS sub-word of $ w $, then all other strings from the same disc should go to only odd (resp., even) indexed MAS sub-word.
That is all one needs to obtain the overlay mentioned in the statement of the assertion.

The uniqueness of the overlay holds because there is no network inside $ X $.
In particular, one obtains $ X_{\t {even}} $ (resp., $ X_{\t {odd}} $) simply by erasing all the marked points on the external disc corresponding to the odd (resp., even) indexed MAS sub-words along with the strings and internal discs connected to these marked points.\qed
\vskip 1em
We return to the proof of the lemma.
For every $ w \in W $ and $ X \in D_w $, let $ \lambda_X $ denote the product of the scalars corresponding to the $ P $-action of every connected networks in $ X $, and $ X' \in D_w $ be the element obtained by removing all networks in $ X $.

In order to establish positivity of $ [\cdot,\cdot]_w $ for $ w \in W' $, consider the linear map defined by
\[
\mcal D_w \supset D_w \ni X \os {\Phi_w} \longmapsto \lambda_X \left(X'_{\t {odd}} \otimes X'_{\t {even}}\right) \in \mcal D_{w_{\t {odd}}} \otimes \mcal D_{w_{\t {even}}}.
\]
Observe that if $ [\cdot , \cdot]_{w,\otimes} $ denotes the sesquilinear form on $ \mcal D_{w_{\t {odd}}} \otimes \mcal D_{w_{\t {even}}} $ obtained from the product of $ [\cdot , \cdot]_{w_{\t {odd}}} $ and $ [\cdot , \cdot]_{w_{\t {even}}} $, then $ [X,Y]_w = [\Phi_w (X), \Phi_w (Y)]_{w,\otimes}$ for all $ X,Y \in \mcal D_w $.

The map $ \Phi_w $ tells us that $ \dim (\mcal D_w) \leq \left[\dim (\mcal D_{w_{\t {odd}}}) \cdot \dim (\mcal D_{w_{\t {even}}})\right] $.
Since the lengths of both $ w_{\t {odd}} $ and $w_{\t {even}} $ are strictly smaller than that of $ w $, a simple induction on the length of $ w $ will tell that $ \mcal D_w $ is finite dimensional for all $ w \in W' $ and that the form is a positive definite as the tensor product of positive sesquilinear forms is again positive on finite dimensional vector spaces.
\end{proof}

For $ w_1,w_2 \in W $, a \textit{$ P $-labelled annular tangle from $ w_1 $ to $ w_2 $} is an oriented tangle in which the color of the external disc is $ w_2 $, there is an unlabelled distinguished internal disc with color $ w_1 $ and all other internal discs have colors in $ W_{\t {alt}} $ and labels from the corresponding $ P $-spaces.
Any such annular tangle $ A:w_1 \ra w_2 $ induces a linear map from $ \mcal D_{w_1} $ to $ \mcal D_{w_2} $ via composition; moreover, one can define an annular tangle $ A^\# :w_2 \ra w_1 $ which is obtained by (i) reflecting $ A $ around the external disc so that the external (resp. distinguished internal) disc becomes the distinguished internal (resp. external) disc after reflection, (ii) reversing the orientation of every string after reflection, and (iii) replacing the label of each internal disc by its $ * $.
It is easy to see that $ \# $ is an involution and $ [A (X),Y]_{w_2} = [X,A^\# (Y)]_{w_1} $ for all $ X \in \mcal D_{w_1} $, $ Y \in \mcal D_{w_2} $ (here we use sphericality of the partition function $Z$.)

Following \cite{J}, we define $\mcal{J}_{w}\subseteq \mcal{D}_{w}$ by $\mcal{J}_{w}:=\{x\in  \mcal{D}_{w}\ :\ Z(A(x) )=0\ \text{for all}\ A:w\rightarrow \varnothing\}$.
By \cite[Proposition 1.24]{J}, this is a planar ideal of $\mcal{D}$, and clearly in our context this is a *-ideal.
We claim that $X\in \mcal{J}_{w}$ if and only if $X$ is in the kernel of $[\cdot, \cdot]_{w}$. Certainly if $\mcal{J}_{w}$ is in the kernel of our form. Suppose $[X,X]_{w}=0$. Let $A:w\rightarrow \varnothing$ be a P-labelled annular tangle. Then by Cauchy-Schwartz, we have

$$|Z(A(X))|:=|[A(X), \varnothing]_{\varnothing}|=|[X, A^{\#}(\varnothing)]_{w}|\le [X,X]^{\frac{1}{2}}_{w} [A^{\#}(\varnothing), A^{\#}(\varnothing)]^{\frac{1}{2}}_{w} =0.$$ 

\noindent proving the claim. Therefore we can define the planar algebra $\mcal{F}(P)_w :=\mcal{D}_{w}/\mcal{J}_{w}$.
This is non-zero, since for $w\in W_{alt}$, $\mcal{D}_{w}/\mcal{J}_{w} \cong P_{w} \ne \{0\}$.
By \cite[Proposition 1.33]{J}, this is a C*-planar algebra.

\begin{defn}
The oriented factor planar algebra $\mcal{F}(P)$ is called \textit{free oriented extension} of the subfactor planar algebra $ P $.
We denote the (subfactor planar algebra) isomorphism between $ \iota: P \lra \mcal{S}(\mcal{F}(P)) $ by $ \left(\iota_w \colon P_w \lra \mcal{F}(P)_w \right)_{w \in W_{\t {alt}}} $.
\end{defn}

Note that the free oriented extension $\mcal{F}$ is actually a functor. Namely, if $\varphi: P\rightarrow P^{\prime}$ is a planar *-homomorphism, then the obvious definition $\mcal{F}(\varphi): \mcal{F}(P)\rightarrow \mcal{F}(P^{\prime})$ works. Simply define $\widetilde{\varphi}: \mcal D^P_{w}\rightarrow \mcal D^{P'}_{w}$, and check that it preserves the partition function. One of the motivations for studying this functor is that it is a left adjoint to the shading functor $\mcal{S}$. We express this via the following universal property.

\comments{
We are now ready to define the oriented planar algebra.
For $ w \in W $, let $ \widetilde{P}_w $ denote the quotient of $ \mcal D_w $ over the null subspace with respect to $ [\cdot,\cdot]_w $. \red{do we have to take the planar ideal in $ \mcal D_w$ generated by the null space? For example, take something which is in the null space, attach it to something else by a string. Is this in the null space? See planar algebras I proposition 1.24}
For $ w\in W_{\t {alt}} $, note that the linear map $ \mcal D_w \supset D_w \ni X  \longmapsto \lambda_X \; P_{X'} \in P_w $ induces an isomorphism between $ \widetilde{P}_w $ and $ P_w $; we denote the inverse of this isomorphism by $ \iota_w :P_w \lra \widetilde P_w $.
In particular, $ \widetilde{P} $ is nonzero.
For finite dimensionality, we are already done with $ w \in W_{\t {alt}} $ and so, it is enough to deal with $ w \in W' $.
The map $ \Phi_w $ tells us $ \dim (\widetilde{P}_w) \leq \left[\dim (\widetilde{P}_{w_{\t {odd}}}) \cdot \dim (\widetilde{P}_{w_{\t {even}}})\right] $.
Using induction on the length of $ w $ again, we get $ \widetilde{P}_w $ is finite dimensional for all $ w \in W' $.

Let $ T:(w_1, \ldots, w_n) \ra w_0$ be an oriented tangle.
The action $ \widetilde{P}_T : \widetilde{P}_{w_1} \times \cdots \times \widetilde{P}_{w_n} \ra \widetilde{P}_{w_0} $ is defined by
\begin{equation}\label{FOPaction}
\widetilde{P}_T ([X_1],\ldots, [X_n]) := [\mcal D_T(X_1, \ldots , X_n)] \t { for all } X_j \in \mcal D_{w_j} ,  1\leq j \leq n .
\end{equation}

For $ w_1,w_2 \in W $, a \textit{$ P $-labelled annular tangles from $ w_1 $ to $ w_2 $} is an oriented tangle in which the color of the external disc is $ w_2 $, there is a unlabelled distinguished internal disc with color $ w_1 $ and all other internal discs have colors in $ W_{\t {alt}} $ and labels from the corresponding $ P $-spaces.
Any such annular tangle $ A:w_1 \ra w_2 $ induces a linear map from $ \mcal D_{w_1} $ to $ \mcal D_{w_2} $ via composition; moreover, one can define an annular tangle $ A^\# :w_2 \ra w_1 $ which is obtained by (i) reflecting $ A $ around the external disc so that the external (resp. distinguished internal) disc becomes the distinguished internal (resp. external) disc after reflection, (ii) reversing the orientation of every string after reflection, and (iii) replacing the label of each internal disc by its $ * $.
It is easy to see that $ \# $ is an involution and $ [A (X),Y]_{w_2} = [X,A^\# (Y)]_{w_1} $ for all $ X \in \mcal D_{w_1} $, $ Y \in \mcal D_{w_2} $.

 Then the well definedness follows from \red{Planar algebras I, Proposition 1.24 and Proposition 1.33 }
Hence, $ \widetilde{P} $ is an oriented planar algebra.
Positivity and sphericality of $ \widetilde{P} $ follows by construction.
Finally, the shaded part of $ \widetilde P $ is indeed isomorphic to $ P $ via the maps $ \left(\iota_w \colon P_w \lra \widetilde  P_w \right)_{w \in W_{\t {alt}}} $.

}

\begin{thm}\label{free con any}
Let $P$ be a subfactor planar algebra and $Q $ an oriented factor planar algebra. For any $*$-homomorphism  $\psi: P\rightarrow \mcal{S}(Q) $, there exists a unique $ * $-homomorphism $ \widetilde \psi : \mcal{F}(P) \ra Q $ such that $ \widetilde  \psi \circ \iota   = \psi$.
\end{thm}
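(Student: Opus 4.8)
The plan is to build $\widetilde\psi$ from a planar $*$-homomorphism out of the unreduced object $\mcal D$ and then descend through the quotient defining $\mcal F(P)$. First I would define a linear map $\widehat\psi_w \colon \mcal D_w \ra Q_w$ on the spanning set $D_w$ by
\[
\widehat\psi_w\big(T(x_1,\dots,x_n)\big) \ \coloneqq \ Q_T\big(\psi(x_1),\dots,\psi(x_n)\big),
\]
where each internal color $w_i \in W_{\t{alt}}$, so that $\psi(x_i)\in \mcal S(Q)_{w_i}=Q_{w_i}$ and the right-hand side makes sense in $Q_w$. Since $\mcal D_w$ is free on $D_w$, this is automatically well defined, and the gluing axiom (Definition \ref{opa}(1)) for $Q$ shows at once that $\widehat\psi \colon \mcal D \ra Q$ intertwines the action of every oriented tangle: inserting labelled tangles into an outer tangle and then applying $Q$ agrees with applying $Q$ piecewise. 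Because $\psi$ is a $*$-homomorphism and $Q$ is a $*$-planar algebra, $\widehat\psi$ commutes with $*$. Thus $\widehat\psi$ is a planar $*$-homomorphism.

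The heart of the proof is to check that $\widehat\psi$ annihilates the ideal $\mcal J$, so that it descends. For this I would establish that $\widehat\psi$ is isometric for the relevant forms, namely
\[
Q_{H_w}\big(\widehat\psi(X),\, \widehat\psi(Y)^*\big) \ = \ [X,Y]_w \qquad \text{for all } X,Y\in\mcal D_w .
\]
Writing $X=S(x_\bullet)$, $Y=T(y_\bullet)$ and using $\widehat\psi(Y)^*=\widehat\psi(Y^*)=Q_{T^*}(\psi(y_\bullet^*))$, the gluing axiom rewrites the left-hand side as the $Q$-action of the closed oriented network $H_w\circ(S,T^*)$ carrying the labels $\psi(x_\bullet),\psi(y_\bullet^*)$. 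Since $Q$ is connected, this value factors as the product over the connected components of the network, exactly as in the definition of the partition function $Z$. Each such component $c$ is a closed \emph{shaded} $P$-labelled tangle into $\pm\varnothing$; because $\psi$ is a homomorphism of subfactor planar algebras and $\psi_{\pm\varnothing}$ is unital, $Q_c(\psi(\t{labels}))=\psi_{\pm\varnothing}\big(P_c(\t{labels})\big)=Z(c)$ (applied to an unlabelled loop, this also forces the moduli of $P$ and $Q$ to coincide). Multiplying over components gives precisely $Z\big(H_w\circ(S,T^*)(x_\bullet,y_\bullet^*)\big)=[X,Y]_w$. This identification of the per-component $Q$-values with the $P$-scalars $Z(c)$ is the main obstacle; everything else is formal. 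Granting it, if $X\in\mcal J_w=\ker[\cdot,\cdot]_w$ then $Q_{H_w}(\widehat\psi(X),\widehat\psi(X)^*)=[X,X]_w=0$, and positivity of $Q$ forces $\widehat\psi(X)=0$. Hence $\widehat\psi$ factors through a planar $*$-homomorphism $\widetilde\psi\colon \mcal F(P)\ra Q$.

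Finally I would verify the two defining conditions. For $w\in W_{\t{alt}}$ and $p\in P_w$, the class $\iota_w(p)$ is represented by the identity tangle with a single internal disc labelled $p$, so $\widetilde\psi(\iota_w(p))=Q_{I_w}(\psi(p))=\psi(p)$; that is, $\widetilde\psi\circ\iota=\psi$. For uniqueness, observe that every spanning class $[T(x_1,\dots,x_n)]\in\mcal F(P)_w$ equals $\mcal F(P)_T(\iota_{w_1}(x_1),\dots,\iota_{w_n}(x_n))$, since $T(x_\bullet)$ is recovered by inserting the basic labelled identity tangles into $T$. Thus $\iota(P)$ planar-generates $\mcal F(P)$, and any planar $*$-homomorphism $\phi$ with $\phi\circ\iota=\psi$ must send $[T(x_\bullet)]$ to $Q_T(\psi(x_\bullet))$, and hence coincides with $\widetilde\psi$.
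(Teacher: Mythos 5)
Your proposal is correct and follows essentially the same route as the paper: define $\widehat\psi_w$ on $D_w$ by $T(x_\bullet)\mapsto Q_T(\psi(x_\bullet))$, show it carries $[\cdot,\cdot]_w$ to the $Q_{H_w}$-inner product so that it kills $\mcal J_w$ and descends to $\widetilde\psi$, and prove uniqueness from the fact that the classes $[T(x_\bullet)]=\mcal F(P)_T(\iota(x_\bullet))$ span $\mcal F(P)_w$. The only difference is that you spell out the component-by-component comparison of the partition function with the $Q$-evaluation (including the observation that the moduli must agree), which the paper leaves implicit in the phrase ``from the very definition of the positive semi-definite form.''
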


\begin{proof}

 Let $\psi=\left(\psi_w \colon P_w \ra Q_w \right)_{w \in W_{\t {alt}}} $.
We will use the notations set up in the construction of the free oriented extension.	
Define
\[
\mcal D_w \supset D_w \ni T(x_1,\ldots, x_n)\oset {\widehat \psi_w} {\longmapsto} Q_T(\psi_{w_1} x_1,\ldots, \psi_{w_n}  x_n) \in Q_w
\]
where $ T:(w_1,\ldots, w_n)\rightarrow w  \t{ and }  x_i \in P_{w_i} $, $ i=1,\ldots, n $ and extend $ \widetilde \psi_w $ linearly to $ \mcal D_w $.
From the very definition of the positive semi-definite form $ [\cdot,\cdot]_w $ on $ \mcal D_w $, the map $ \widehat \psi_w $ takes it to the inner product in $ Q_w $ induced by the $ Q $-action of the inner product tangle $ H_w $ (since the shaded part of $ Q $ is $ P $ via $ \psi $).
So, $ \widehat \psi_w $ factors through the quotient $ \mcal{F}(P)_w $ producing the map $ \widetilde \psi_w : \mcal{F}(P)_w \ra Q_w $.
Moreover, $ \widetilde \psi $ preserving the action is almost immediate.

To check the equation $ \widetilde  \psi_w  \circ \iota_w = \psi_{w}$ for all $ w \in W_{\t {alt}} $, take the element $ T(x_1, \ldots ,x_n) $ in the previous paragraph with the extra assumption $ w \in W_{\t {alt}} $.
We may assume $ T $ has no non-empty network (which anyway gives scalar), and thereby $ T $ becomes a shaded tangle.
Consider the equivalence class $ [T(x_1, \ldots, x_n)] \in \mcal{F}(P)_w $; note that $ \iota  P_T (x_1, \dots, x_n) = [T(x_1, \ldots , x_n)]  $.
On the other hand, \[\widetilde \psi_w [T(x_1, \ldots , x_n)] =  Q_T (\psi_{w_1} x_1, \ldots, \psi_{w_n} x_n) = \psi_w P_T (x_1,\ldots, x_n).\]
Since elements of the form $ P_T(x_1,\ldots,x_n) $ span $ P_w $, we have the desired equation.

For uniqueness, consider a $ * $-homomorphism $ \vphi : \mcal{F}(P) \ra Q $ satisfying $ \vphi_v \circ \iota_v = \psi_{v} $ for all $ v \in W_{\t {alt}} $.
Again, consider $ T(x_1, \ldots ,x_n) $ as above with $ w $ possibly not in $ W_{\t {alt}} $.
Observe that 
\[
\vphi_w [T(x_1, \ldots , x_n)] = \vphi_w \mcal{F}(P)_T (\iota_{w_1} x_1, \ldots , \iota_{w_n} x_n)] =  Q_T (\psi_{w_1} x_1, \ldots, \psi_{w_n} x_n) = \widetilde \psi_w [T(x_1, \ldots , x_n)]
\]
where the first equality follows from the definition of $\mcal{F}(P)$, second follows from $ \vphi : \mcal{F}(P) \lra Q $ being a $ * $-homomorphism and the equation satisfied by $ \vphi $ and $ \psi $, and the final equality comes from the definition of $ \widetilde \psi $.
\end{proof}

The above universal property makes the functor $\mcal{F}: \mcal{P}_{sh}\rightarrow \mcal{P}_{or}$ into a left adjoint of this functor $\mcal{S}$ (see \cite[Chap \rm{IV}, Theorem 2]{M}).

\subsection{Concrete realization}

Above, we constructed the free oriented extension from scratch whereas Thoerem \ref{free con any} gives us a universal property for an oriented extension being isomorphic to the free one.
In the next theorem, we will provide another useful construction of the free oriented extension. Given some oriented extension, we will show that one can find the \textit{free oriented extension} inside a certain free product category; we use techniques similar to those appeared in \cite{MPS}.
We will make use of this in the next section when we study hyperfinite realizability.

First, we describe a general construction for producing new oriented extensions from a given one.
Let $ \mcal C $ be a strict rigid semisimple C*-tensor category with simple tensor unit and tensor-generated by $ X_+ $; suppose $Q \coloneqq P^{X_+}$  and $ P\coloneqq \mcal S (Q) $.
Let $ g_+ $ be an element in a  group $G$ and $\mcal{B}:=\operatorname{Hilb}_{f.d.}(G)$ be the rigid C*-tensor category of $G$-graded finite dimensional Hilbert spaces.
We look at the following objects in the free product $ \mcal B \ast \mcal C $
\[
X_- \coloneqq \ol X_+ , \ \ g_- \coloneqq g^{-1}_+ , \ \ Y_+ \coloneqq g_+\ X_+\ g_+ , \ \ Y_- \coloneqq g_-\  X_-\  g_- = \ol Y_+ .
\]
Appealing to \Cref{freeprod}, although we will continue using the notation $ \mcal B \ast \mcal C $, all our objects and morphisms in the rest of this subsection will come from the corresponding full subcategory $ \mcal{NCP} $.
For $ w = (\vlon_1,\ldots , \vlon_n) \in W $, suppose $ X_w $ (resp., $ Y_w $) denotes the object $ X_{\vlon_1} \otimes \cdots \otimes X_{\vlon_n} $ (resp., $ Y_{\vlon_1} \otimes \cdots \otimes Y_{\vlon_n} $).
Choose unitaries $ R_-: \mathds 1 \ra g_- \otimes g_+$, $R_+ : \mathds 1 \ra g_+ \otimes g_- $ solving the conjugate equations for $ (g_+, g_-) $.
Note that, since $ g_{\pm} $ are invertible (and hence simple), these solutions are automatically standard, and they being unitaries, are normalized.
If an alternately signed word $ u $ (possibly of odd length) starts with $ \vlon $ and ends with $ \nu $, then we say that \textit{$ u $ is of $ (\vlon, \nu) $-type}; moreover, for such $ u $, we have the unitary $ \alpha_u \in \left(\mcal B \ast \mcal C\right) (Y_u \ , \ g_\vlon \ X_u \ g_\nu ) $ applying the unitaries $ R^*_- $ (resp., $ R^*_+ $) on each $ g_- \otimes g_+  $ (resp., $ g_+ \otimes g_- $) appearing between $ X_- $ and $ X_+ $ (resp., $ X_+ $ and $ X_- $) which appear in the tensor expansion of $ Y_u $.
For $ w \in W_{\t {alt}} $ of $ (\vlon, - \vlon) $-type, define the map $ \gamma_w :P_w \ra P^{Y_+}_w $ by\\
\[
P_w = \mcal C(\mathds 1, X_w)  \ni f \os {\gamma_w} \longmapsto \alpha^*_w \ (1_{g_\vlon} \otimes f \otimes 1_{g_{-\vlon}})\ R_\vlon  \ \in P^{Y^+}_w.
\]

For instance, if $ u=+-+ $, then, graphically, \[ \alpha_u =
\psfrag{a}{$ g_+ $}
\psfrag{b}{$ X_+ $}
\psfrag{c}{$ g_+ $}
\psfrag{d}{$ g_- $}
\psfrag{e}{$ X_- $}
\psfrag{f}{$ g_- $}
\psfrag{g}{$ g_+ $}
\psfrag{h}{$ X_+ $}
\psfrag{i}{$ g_+ $}
\raisebox{-1.7em}{\includegraphics[scale=0.24]{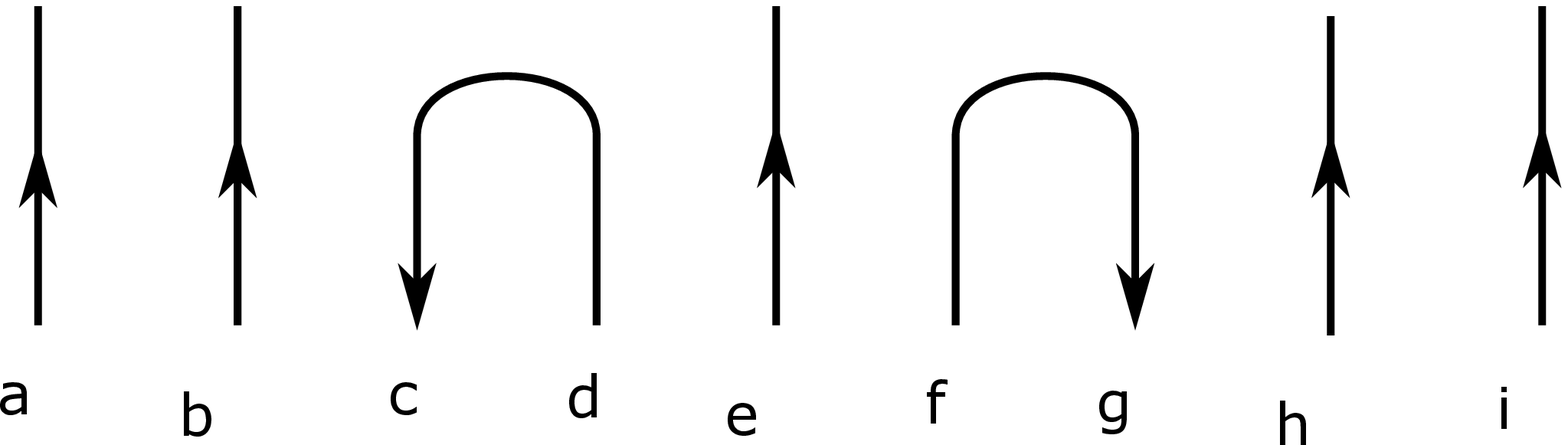}}\]

\noindent and, if $ w=+-+- $, then for $ f \in P_w = \mcal C(\mathds 1, X_w) $, $ \gamma_w(f) $ looks like

\[
\psfrag{a}{$ g_+ $}
\psfrag{b}{$ X_+ $}
\psfrag{c}{$ g_+ $}
\psfrag{d}{$ g_- $}
\psfrag{e}{$ X_- $}
\psfrag{f}{$ g_- $}
\psfrag{g}{$ g_+ $}
\psfrag{h}{$ X_+ $}
\psfrag{i}{$ g_+ $}
\psfrag{j}{$ g_- $}
\psfrag{k}{$ X_- $}
\psfrag{l}{$ g_- $}
\psfrag{m}{$ f $}
\raisebox{-1.7em}{\includegraphics[scale=0.24]{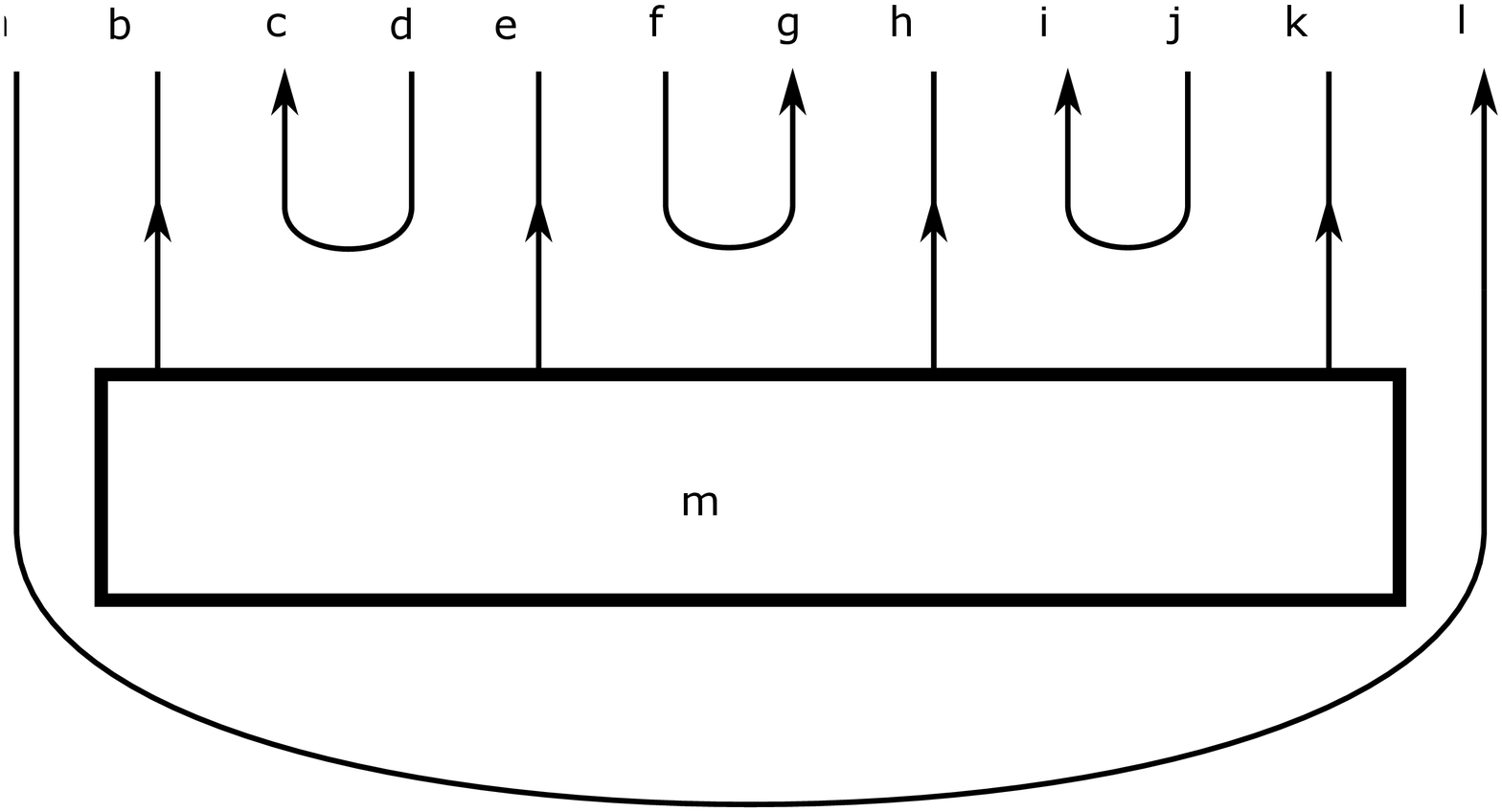}}\]

In order to build the planar algebra $ Q = P^{X_+} $ (and hence $ P $ too), we need to fix a normalized standard solution $ (R_{X_+} , \ol R_{X_+}) $ to the conjugate equations for $ (X_+ , X_-) $ (see Section \ref{C*2OPA}).
Choosing such solutions for $ (Y_+, Y_-) $ involving $ (R_{X_+} , \ol R_{X_+}) $ and $ (R_+, R_-) $ in the most obvious way, one can easily derive that $ \left(\gamma_w : P_w \lra P^{Y_+}_w\right)_{w \in W_{\t {alt}}} $ is indeed a subfactor planar algebra isomorphism. As we will see in the next lemma, for \textit{any oriented extension} $Q$, this oriented extension is isomorphic to the free oriented extension in the case $g_+$ has infinite order.

\comments{
Let $g,h\in G$. We will construct a new oriented extension inside $\operatorname{Hilb}_{f.d.}(G)* \mcal{C}$ as follows. Let $X_{+}\in \mcal{C}$ denote the generator associated to the oriented strand in the planar algebra and $X_{-}$ its dual. Consider the object $ Y_+ \coloneqq g \ X_+ \ h \in \mcal B \ast \mcal C $. Set $Y_{-}:=h^{-1}X_{-}g^{-1}\in \mcal{C}$. Pick standard solutions to the duality equations for $Y_{+}$ and $Y_{-}$, and let $Q^{Y_{+}}$ denote the oriented planar algebra associated to this. 
We claim $Q^{Y^{+}}$ is an oriented extension  $ \mcal{S}(P^{Y_+})\cong P $.

For $ w = (\vlon_1,\ldots , \vlon_n) \in W $, let $ X_w $ (resp., $ Y_w $) denote the object $ X_{\vlon_1} \otimes \cdots \otimes X_{\vlon_n} $ (resp., $ Y_{\vlon_1} \otimes \cdots \otimes Y_{\vlon_n} $).
Choose unitaries $ R_-: \mathds 1 \ra g_- \otimes g_+$, $R_+ : \mathds 1 \ra g_+ \otimes g_- $ solving the conjugate equation for $ (g_+, g_-) $.
If an alternately signed word $ u $ (possibly of odd length) starts with $ \vlon $ and ends with $ \nu $, then we say that \textit{$ u $ is of $ (\vlon, \nu) $-type}; moreover, for such $ u $, we have the unitary $ \alpha_u \in \left(\mcal B \ast \mcal C\right) (\mathds Y_u \ , \ g_\vlon \ X_u \ g_\nu ) $ applying the unitaries $ R^*_- $ (resp., $ R^*_+ $) on each $ g_- \otimes g_+  $ (resp., $ g_+ \otimes g_- $) appearing between $ X_- $ and $ X_+ $ (resp., $ X_+ $ and $ X_- $) which appear in the tensor expansion of $ Y_u $.
For $ w \in W_{\t {alt}} $ of $ (\vlon, - \vlon) $-type, define the map $ \gamma_w :P_w \ra P^{Y_+}_w $ by\\
\[
P_w = \mcal C(\mathds 1, X_w)  \ni f \os {\gamma_w} \longmapsto \alpha^*_w \ (1_{g_\vlon} \otimes f \otimes 1_{g_{-\vlon}})\ R_\vlon  \ \in P^{Y^+}_w.
\]
 In order to build the planar algebra $ Q = P^{X_+} $ (and hence $ P $ too), we need to fix a normalized standard solution $ (R_{X_+} , \ol R_{X_+}) $ to the conjugate equations for $ (X_+ , X_-) $ (see \Cref{C*2OPA}). \red{Don't we already have these, because }
Choosing such solutions for $ (Y_+, Y_-) $ involving $ (R_{X_+} , \ol R_{X_+}) $ and $ (R_+, R_-) $ in the most obvious way, one can easily derive that $ \left(\gamma_w : P_w \lra P^{Y_+}_w\right)_{w \in W_{\t {alt}}} $ is indeed a subfactor planar algebra isomorphism.

}

\begin{thm}\label{orientedrealizationivertible}
Let $ Q $ be any oriented extension of a subfactor planar algebra $ P $.
Then the projection category of $ \mcal F (P) $ (that is, $ \mcal K(\mcal C^{\mcal{F}(P)}) $) is equivalent to a full tensor-subcategory of the free product $ \operatorname{Hilb}_{f.d.}(\mathbb{Z}) \ast \mcal K(\mcal C^Q) $.
\end{thm}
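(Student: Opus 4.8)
The plan is to identify $\mcal F(P)$ with the oriented extension $P^{Y_+}$ constructed just above the theorem, and then read off the projection category from the free product. Recall that for the given oriented extension $Q$ we set $\mcal C = \mcal K(\mcal C^Q)$, took $G = \Z$ with $g_+$ of infinite order and $\mcal B = \operatorname{Hilb}_{f.d.}(\Z)$, and produced inside $\mcal B \ast \mcal C$ an oriented factor planar algebra $P^{Y_+}$ together with a subfactor planar algebra isomorphism $\gamma = (\gamma_w)_{w \in W_{\t{alt}}} : P \ra \mcal S(P^{Y_+})$. Thus $P^{Y_+}$ is an oriented extension of $P$, and \Cref{free con any} applied to $\gamma$ yields a unique $*$-homomorphism $\wt\gamma : \mcal F(P) \ra P^{Y_+}$ with $\wt\gamma \circ \iota = \gamma$; by automatic injectivity of $*$-homomorphisms between oriented factor planar algebras, $\wt\gamma$ is injective. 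The theorem will follow once $\wt\gamma$ is shown to be an isomorphism: then $\mcal K(\mcal C^{\mcal F(P)}) \cong \mcal K(\mcal C^{P^{Y_+}})$, and by \Cref{OPA-C*-correspondence} together with the construction of $P^{Y_+}$ from the single object $Y_+$, the latter is equivalent to the full tensor-subcategory $\lab Y_+ \rab$ of $\mcal B \ast \mcal C = \operatorname{Hilb}_{f.d.}(\Z) \ast \mcal K(\mcal C^Q)$.

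Since $\wt\gamma$ is injective, everything reduces to proving $\dim \mcal F(P)_w = \dim P^{Y_+}_w = \dim (\mcal B \ast \mcal C)(\mathds 1 , Y_w)$ for every $w$, where $Y_w = Y_{\vlon_1} \otimes \cdots \otimes Y_{\vlon_n}$ for $w = (\vlon_1, \ldots, \vlon_n)$. I would compute the right-hand side using the $\mcal{NCP}$-model of the free product from \Cref{freeprod}. Expanding $Y_{\vlon} = g_{\vlon} X_{\vlon} g_{\vlon}$ and writing $Y_w$ as an alternating word over $\operatorname{Obj}(\mcal B) \sqcup \operatorname{Obj}(\mcal C)$, the interior group factors $g_{\vlon_i} g_{\vlon_{i+1}}$ collapse to the unit $\mathds 1_{\mcal B}$ exactly when $\vlon_i \ne \vlon_{i+1}$ and to the nontrivial invertible object $g_+^{\pm 2}$ when $\vlon_i = \vlon_{i+1}$, while the two extreme factors $g_{\vlon_1}, g_{\vlon_n}$ are nontrivial of grade $\pm 1$. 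Hence the $\mcal C$-letters of $Y_w$ organise into maximal runs $X_{w_1}, \ldots, X_{w_{2k}}$ separated by nontrivial group elements, and these runs are precisely the MAS sub-words of $w$ appearing in the proof of \Cref{positivity}. In particular the total $\mcal B$-grade of $Y_w$ vanishes iff $w$ has equally many $+$ and $-$, matching the fact that $\mcal D_w = \{0\}$ otherwise.

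The crux — and the only place the infinite order of $g_+$ enters — is the rigidity of the non-crossing pairings of these separating objects. Because $g_+^{\pm 1}$ and $g_+^{\pm 2}$ are invertible and, by infinite order, never cancel except against their formal inverses, in any spanning vector $Z_T \in (\mcal B \ast \mcal C)(\mathds 1, Y_w)$ the $\mcal B$-blocks of the partition $T$ are forced into the unique grade-cancelling non-crossing configuration on the separators. I expect verifying this uniqueness, and checking that it prohibits exactly the pairings across odd- and even-indexed MAS sub-words, to be the main obstacle; it is the free-product incarnation of the overlay Assertion inside the proof of \Cref{positivity}. Once this is in hand, each run is capped independently by a morphism in $\mcal C(\mathds 1, X_{w_j}) = P_{w_j}$, and the forced separator pairing yields a factorization $P^{Y_+}_w \cong P^{Y_+}_{w_{\t{odd}}} \otimes P^{Y_+}_{w_{\t{even}}}$ that mirrors the factorization $\mcal F(P)_w \cong \mcal F(P)_{w_{\t{odd}}} \otimes \mcal F(P)_{w_{\t{even}}}$ coming from the map $\Phi_w$ of \Cref{positivity}.

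With these two parallel factorizations, the equality of dimensions follows by induction on the length of $w$: after reducing to $w \in W'$ using rotation invariance of both forms (the rotation tangle acts invertibly on each planar algebra and preserves the relevant inner products), the base case is $w \in W_{\t{alt}}$, where $\gamma_w$ and $\iota_w$ are already isomorphisms onto $P_w$, so $\wt\gamma_w$ is bijective there. The inductive step uses that $w_{\t{odd}}$ and $w_{\t{even}}$ are strictly shorter than $w$. Therefore $\wt\gamma$ is an isomorphism of oriented factor planar algebras. Transporting along it gives $\mcal K(\mcal C^{\mcal F(P)}) \cong \mcal K(\mcal C^{P^{Y_+}}) \cong \lab Y_+ \rab$, a full tensor-subcategory of $\operatorname{Hilb}_{f.d.}(\Z) \ast \mcal K(\mcal C^Q)$, as claimed.
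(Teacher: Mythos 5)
Your frame --- realize $\mcal F(P)$ as $P^{Y_+}$ inside $\operatorname{Hilb}_{f.d.}(\Z)\ast\mcal K(\mcal C^Q)$, obtain $\wt\gamma:\mcal F(P)\ra P^{Y_+}$ from \Cref{free con any}, use automatic injectivity, and read off the projection category as $\lab Y_+\rab$ --- is exactly the paper's, as is your observation that the group separators organise the $\mcal C$-letters of $Y_w$ into runs indexed by the MAS sub-words. The gap is in the step that converts this into a dimension count. The two ``parallel factorizations'' $\mcal F(P)_w\cong\mcal F(P)_{w_{\t{odd}}}\otimes\mcal F(P)_{w_{\t{even}}}$ and $P^{Y_+}_w\cong P^{Y_+}_{w_{\t{odd}}}\otimes P^{Y_+}_{w_{\t{even}}}$ are false in general. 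The map $\Phi_w$ of \Cref{positivity} is only injective: a pair $\left(X_{\t{odd}},X_{\t{even}}\right)$ lies in its image only if the two labelled tangles can be overlaid \emph{without crossings} on the disc carrying the interleaved word $w_1w_2\cdots w_{2k}$, and a chord joining $w_1$ to $w_3$ necessarily crosses a chord joining $w_2$ to $w_4$. Concretely, for $P=TL^{\delta}$ with $\delta\geq 2$ and $w=+--++--+$ (MAS decomposition $(+-)(-+)(+-)(-+)$) there are exactly three non-crossing sign-opposite pairings of the eight boundary points, so $\dim\mcal F(TL^{\delta})_w\leq 3$, whereas $\dim\mcal F(TL^{\delta})_{+-+-}\cdot\dim\mcal F(TL^{\delta})_{-+-+}=2\cdot 2=4$; the missing configuration $(1\,6)(2\,5)(3\,8)(4\,7)$ is non-planar. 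The same planarity constraint, now imposed by the non-crossing condition on partitions in $\mcal{NCP}$, obstructs the factorization on the free product side. Relatedly, the grade-cancelling non-crossing configuration of the separators is not unique --- for $h_+h_-h_+h_-$ both $(1\,2)(3\,4)$ and $(1\,4)(2\,3)$ occur, and different choices permit different families of $\mcal C$-blocks --- so there is no single ``forced separator pairing'' from which to factor.

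Because neither side factors, the induction on the length of $w$ has nothing to run on, and indeed the paper never computes either dimension. What it proves instead is surjectivity of $\wt\gamma_w$ directly: after the same rotation reduction to words starting and ending with the same sign, it inducts on the number of MAS sub-words; in any nonzero spanning vector $Z_f$ it locates a pair of \emph{adjacent} separators $h_{\vlon_i},h_{\vlon_{i+1}}$ that are pair-partitioned, notes that non-crossingness then forces the enclosed run $X_{w_i}$ (with $w_i\in W_{\t{alt}}$) to be capped by a single $\wt x\in P_{w_i}$, strips this pair off to drop the number of MAS sub-words by two, and concludes that the $\mcal C$-data of $f$ assembles into a labelled tangle $T(x_1,\ldots,x_m)\in D_w$ with $\sigma(Z_f)=\wt\gamma\left[T(x_1,\ldots,x_m)\right]$. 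If you want to keep a counting flavour, you would have to match the non-crossing overlay configurations \emph{and} the linear relations among them on the two sides, which is essentially what this surjectivity argument does without ever naming the dimensions.
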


\begin{proof}
Without loss of generality (via the converse part of \Cref{OPA-C*-correspondence}), we may assume $ Q \coloneqq P^{X_+}  $ and $ P \coloneqq \mcal S  (Q) $ for some strict, semisimple, rigid C*-tensor category $ \mcal C $ with simple tensor unit and tensor-generated by $ X_+ $.
We borrow the notations described right before the statement of this theorem by setting $ G = \Z $ and $ g_+ = 1 \in \Z $.
It is enough to show that $ P^{Y_+} $  is isomorphic to $ \mcal{F}(P) $.

Applying \Cref{free con any}, we get a unique $ * $-homomorphism $ \widetilde \gamma :\mcal{F}(P) \lra P^{Y_+} $ such that $ \widetilde \gamma_w \circ \iota_w = \gamma_w $ for all $ w \in W_{\t {alt}} $.
It is enough to show $ \widetilde \gamma_w $ is surjective for all $ w\in W $ (which implies $ \mcal{F}(P) $ and $ P^{Y_+} $ are isomorphic since $ * $-homomorphisms between oriented factor planar algebras are automatically injective).
We are already done with the cases when $ w \in W_{\t {alt}} $ (since $ \gamma_w $ is surjective) or $ P^{Y_+}_w $ is zero.

For $ w , w' \in W$, we say \textit{$ w $ is  a rotation of $ w' $} if there exists $ w_1,w_2 \in W $ such that $ w= w_1 w_2 $ and $ w' = w_2 w_1 $.
For any such $ w , w' \in W $, if $ \widetilde \gamma_w $ is surjective, then so is $ \widetilde \gamma_{w'} $.
To see this, pick any rotation implementing oriented tangle $ \rho: w \lra w' $; note that
\[
P^{Y_+}_{w'} = P^{Y_+}_\rho (P^{Y_+}_{w})  = P^{Y_+}_\rho \ \widetilde \gamma_w (\mcal{F}(P)_{w}) = \widetilde \gamma_{w'} \ \mcal{F}(P)_\rho (\mcal{F}(P)_{w}) = \widetilde \gamma_{w'} (\mcal{F}(P)_{w'}).
\]
Clearly, the rotation class of every word in $W \setminus W_{\t {alt}} $ whose $ P^{Y_+} $-space is non-zero, must have at least one word belonging to the set
\[
W_0 \coloneqq \left\{ w \in W \setminus W_{\t {alt}}  : w \t { starts and ends with the same sign, and } P^{Y_+}_w \neq \{0\} \right\} .
\]
Our goal boils down to establishing surjectivity of $ \widetilde \gamma_w $ for all $ w \in  W_0 $.
For this, we use induction on the number of MAS sub-words.

Let $ h_+ \coloneqq g_+ \otimes g_+ $ and $ h_- \coloneqq g_- \otimes g_- = \ol h_+ $.
Suppose $ w \in W_0 $.
If $w = w_1 w_2 \ldots w_k $ is the MAS sub-word decomposition and $ w_i $ has $ (\vlon_i,\vlon_{i+1}) $-type for $ 1\leq i \leq n $ (and thereby $ \vlon_1 = \vlon_{n+1} $), then we have an isomorphism

\noindent $\left(\mcal B \ast \mcal C\right) (\mathds 1 \ ,\ h_{\vlon_1} X_{w_1} h_{\vlon_2} X_{w_2} \cdots h_{\vlon_n} X_{w_n} ) \ni a \os {\displaystyle \sigma} \longmapsto$
\begin{flushright}
$ (R^*_{-\vlon_1}\otimes \alpha^*_{w_1} \otimes \cdots \otimes \alpha^*_{w_n}) (1_{g_{-\vlon_1}} \otimes a \otimes 1_{g_{\vlon_1}} ) R_{-\vlon_1} \in \left(\mcal B \ast \mcal C\right) (\mathds 1 \ , \ Y_{w_1} \otimes \cdots \otimes Y_{w_n}) = P^{Y_+}_w$.
\end{flushright}
Graphically, $ \sigma(a) $ is given by

\[
\psfrag{a}{$ Y_{w_1} $}
\psfrag{b}{$ Y_{w_2} $}
\psfrag{c}{$ Y_{w_n} $}
\psfrag{d}{$ \alpha^*_{w_1} $}
\psfrag{e}{$ \alpha^*_{w_2} $}
\psfrag{f}{$ \alpha^*_{w_n} $}
\psfrag{g}{$ g_{\varepsilon_1} $}
\psfrag{h}{$ g_{\varepsilon_1}  $}
\psfrag{i}{$ X_{w_1} $}
\psfrag{j}{$ g_{\varepsilon_2}  $}
\psfrag{k}{$ g_{\varepsilon_2}  $}
\psfrag{l}{$ X_{w_2}  $}
\psfrag{m}{$ a $}
\psfrag{n}{$ g_{\varepsilon_3}  $}
\psfrag{o}{$ g_{\varepsilon_{n-1}} $}
\psfrag{p}{$ X_{w_n}  $}
\includegraphics[scale=0.3]{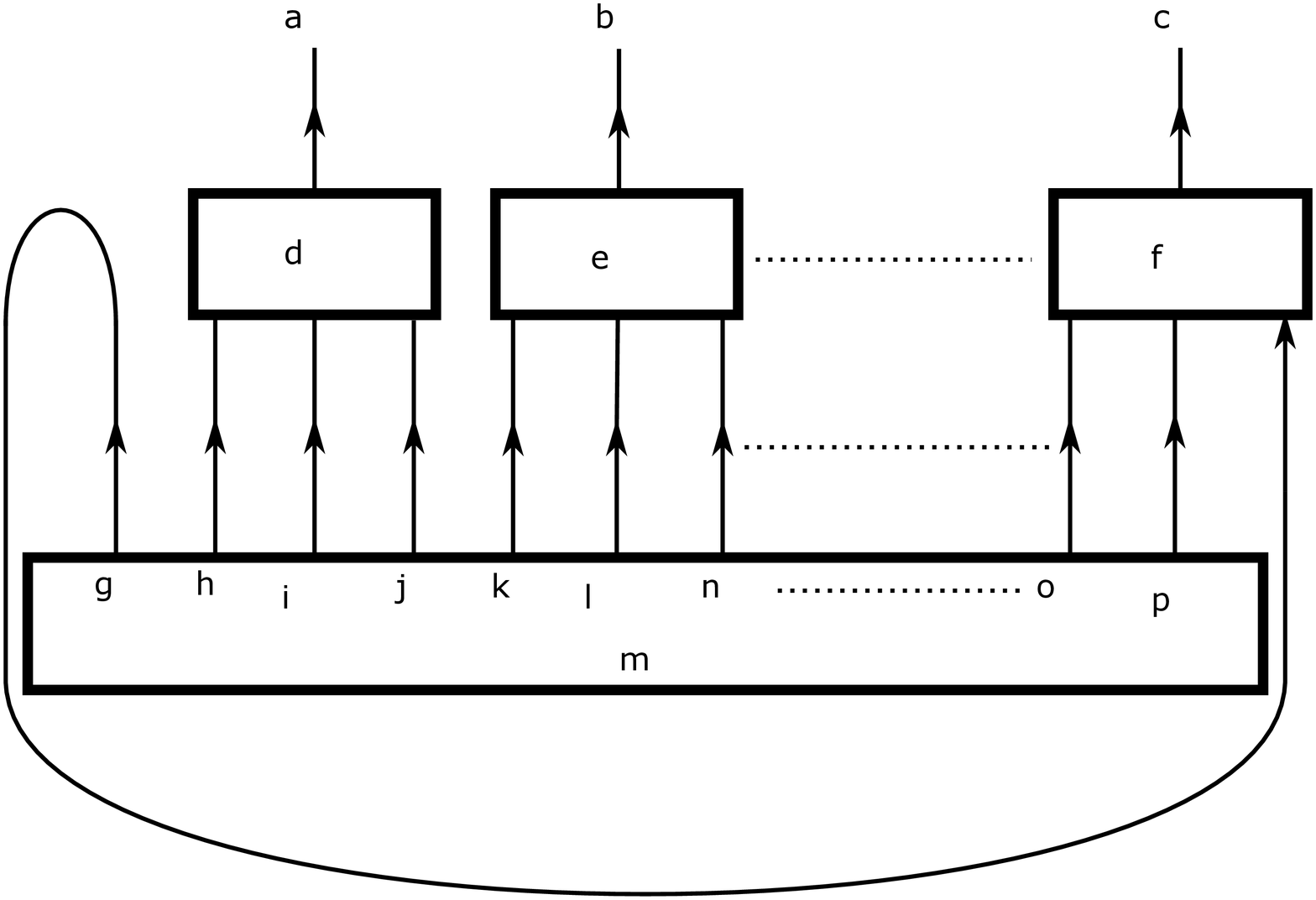}
\]

We now use the description of morphism spaces of $ \mcal B \ast \mcal C $ (in fact, $ \mcal {NCP} $) in \Cref{free prod prel}; the morphism space $ \left(\mcal B \ast \mcal C\right) (\mathds 1 \ ,\ h_{\vlon_1} X_{w_1} h_{\vlon_2} X_{w_2} \cdots h_{\vlon_n} X_{w_n} ) $ by definition is a subspace of
\[
\mcal B(\mathds 1_{\mcal B} , h_{\vlon_1} \otimes \cdots \otimes h_{\vlon_n})  \ \otimes \ \mcal C(\mathds 1_{\mcal C} , X_{w_1} \otimes \cdots \otimes X_{w_n}) = \mcal B(\mathds 1_{\mcal B} , h_{\vlon_1} \otimes \cdots \otimes h_{\vlon_n})  \ \otimes \ \mcal C(\mathds 1_{\mcal C} , X_w) .
\]

\comments{
Moreover, this morphism space has a distinguished spanning set whose typical element is described by\\
(1) a non-crossing partitioning of a rectangle (including the interior) whose top boundary has marked points labelled from left to right by the sequence $ h_{\vlon_1}, X_{w_1}, h_{\vlon_2}, X_{w_2}, \ldots, h_{\vlon_n}, X_{w_n} $ such that each partition has at least one member from the sequence and consists of only $ h_{\vlon_i} $'s or only $ X_{w_i} $'s, and\\
(2) each partition is assigned a non-zero morphism from $ \mathds 1 $ to the tensor product of the objects appearing in the partition read from left to right.

Pick any non-zero $ f $ in this spanning set.
}
Let $ f $ be a $ \left(\varnothing \  , \ h_{\vlon_1} X_{w_1} h_{\vlon_2} X_{w_2} \ldots h_{\vlon_n} X_{w_n} \right) $-NCP such that $ Z_f $ is non-zero.
Observe that $ f $ can viewed as an non-crossing overlay of a pair of oriented tangles $ S $ and $ T(x_1 , \ldots , x_m) $ such that 

\begin{enumerate}
\item
$S$ connects to the points on the boundary of the rectangle marked by $ h_{\vlon_i} $'s,  and has  internal discs labelled by non-zero elements of $ P^{h_+}_\bullet $,

\item
$ T $ connects to the points marked by $ X_{w_j} $'s with internal discs labeled by $ x_k $'s coming from $ P^{X_+}_\bullet $.

\end{enumerate}
Since $ (h_+, h_-) $ is a dual pair of invertible objects and none of their non-zero finite tensor powers is equivalent to $ \mathds 1 $, we may replace $ S $ (up to a non-zero scalar) by a Temperley-Lieb diagram where $ h_+ $ can be joined by a string only to $ h_- $.
In other words, the partitions in $ f $ consisting of $ h_{\vlon_i} $'s can be assumed to be pair-partitions of $ h_+$ and $ h_- $.

\comments{
As an element of a vector space,
\[
Z_f = P^{h_+}_S \  \otimes \ P^{X_+}_T (x_1, \ldots , x_m) \ \in \mcal B(\mathds 1_{\mcal B} , h_{\vlon_1} \otimes \cdots \otimes h_{\vlon_n})  \ \otimes \ \mcal C(\mathds 1_{\mcal C} , X_w).
\]
\vskip 2em

}

Now recall the definition of $D_{w}$ from the free oriented extension construction in the previous section. We claim that $ T(x_1, \ldots , x_m) \in D_w $ . To see this, we use induction on the number of MAS-sub-words.
First of all, note that $ n $ has to be even since $ S $ is given by pair-partitions, each consisting of $ h_+$ and $ h_-  $, implying
\[
\left\{i\in \{1, \ldots n\} : h_{\vlon_i} = +\right\} = \left\{ i\in \{1, \ldots n\} : h_{\vlon_i} = -\right\}.
\]
So, the smallest $ n $ is $ 2 $ in which case $ (h_{\vlon_1} , h_{\vlon_2}) $ is either $ (+,-) $ or $ (-,+) $.
This implies both $ w_1 $ and $ w_2 $ has to be even and thereby lie in $ W_\t{alt} $.
Now, the non-crossing nature of the partitioning forces $ X_{w_1} $ and $ X_{w_2} $ to be separate singleton partitions because $ (h_{\vlon_1} , h_{\vlon_2} )$ forms a partition.
Thus $ T $ has exactly $ 2 $ internal discs with colors $ w_1, w_2 \in W_\t{alt} $.
Hence, $ T(x_1 , x_2) \in D_w $.

Suppose our $ T(x_1, \ldots , x_m) \in D_w $ holds for all $T$, $x_{i}$, and $w\in W_0 $ with number of MAS sub-words at most $ 2n$.
Pick $ w \in W_0 $ with $ 2n+2 $ MAS sub-words.
In the Temperley-Lieb diagram $ S $, we can find consecutive elements $ h_{\vlon_i} $ and $ h_{\vlon_{i+1} }$ which are pair partitioned, implying $ \vlon_{i+1} = - \vlon_i $.
Further, we may assume $ i >1 $ since $ 2n+2 >4 $.
As a result, $ w_i $ must have even length and thereby belong to $ W_\t{alt} $.
The non-crossing partitioning forces $ X_{w_i} $ to become a singleton partition.
So, $ T  $ has an internal disc with the color $ w_i \in W_\t{alt} $, connected to the MAS sub-word $ w_i $ of $ w $ on the boundary of the external disc, and labelled with $ \widetilde x \in P^{X_+}_{w_i} = \mcal C (\mathds 1 ,X_{w_i}) $.
Set $ w' \coloneqq w_{i-1} w_{i+1}$ and $ w'' \coloneqq w_1,\ldots w_{i-2}, w', w_{i+2}, \ldots w_{2n+2} $.
Clearly, the word $ w' $ is alternately signed and the defining equation of $ w'' $ gives its MAS sub-word decomposition.
In the non-crossing partitioning of $ f $, we erase the partitions $ (X_{w_i}) $ and $ (h_{\vlon_i},h_{\vlon_{i+1}}) $ and their associated morphisms, to get a new one, say $ f' $.
Note that $Z_{f'} \in \left(\mcal B \ast \mcal C\right) (\mathds 1 , h_{\vlon_1} X_{w_1} \cdots X_{w_{i-2} } h_{\vlon_{i-1}} X_{w'} h_{\vlon_{i+2}} X_{w_{i+2}} \cdots X_{w_{2n+2}}) $.
We have the formula
\[
Z_f = \left( 1_{h_{\vlon_1} X_{w_1} \cdots X_{w_{i-2} } h_{\vlon_{i-1}} X_{w_{i-1}}} \otimes \widetilde x \otimes 1_{X_{w_{i+1}} h_{\vlon_{i+2}} X_{w_{i+2}} \cdots X_{w_{2n+2}}} \right) \circ Z_{f'}.
\]
Therefore $ Z_{f'} \neq 0 $ ($Z_f$ is assumed to be nonzero).
Let $ S' $ and $ T'(x'_1 , \ldots , x'_{m'}) $ be the corresponding tangles coming from $ f' $.
As $ w'' $ has $ 2n $ MAS sub-words, by induction hypothesis, we have $ T' (x'_1, \ldots , x'_{m'})  \in D_{w''}$.
Now, $ T(x_1, \ldots , x_m) $ can be obtained from $ T' (x'_1,\ldots , x'_{m'}) $ first by splitting $ w' $ on the boundary of the external disc into $ w_{i-1} $ and $ w_{i+1} $  and inserting the word $ w_i \in W_\t{alt} $ in between, and then attaching an internal disc of color $ w_i $ labelled with $ \widetilde x $ to this inserted word $ w_i $ with strings.
Hence, $ T(x_1 , \ldots , x_m) \in D_w$ proving the claim.

To complete the proof, it will suffice to show $ \sigma $ sends the non-zero $ Z_f $ to
\[
P^{Y_+}_T (\gamma_{v_1} x_1 , \dots , \gamma_{v_m} x_m ) = \widetilde \gamma [T(x_1, \ldots , x_m)] \in P^{Y_+}_w = \left(\mcal B \ast \mcal C\right) (\mathds 1 , Y_w)\subset \mcal B(\mathds 1_{\mcal B} , h_w) \otimes \mcal C ( \mathds 1_{\mcal C} , X_w).
\]
Note that $ \mcal B (\mathds 1_{\mcal B} , h_w) $ is one-dimensional.
Just as $ Z_f = P^{h_+}  \otimes P_T(x_1,\ldots , x_m)$, a its straightforward to see that $ \sigma Z_f $ can also be expressed as $ P^{g_+}_{S'} \otimes P^{X_+}_T (x_1,\ldots , x_m) $ where $ S' $ is a pair-partitioning of the $ g_\pm $ appearing in $ Y_w $.

Next, we look at the non-crossing partitioning $ P^{Y_+}_T (\gamma_{v_1} x_1 , \dots , \gamma_{v_m} x_m )$. 
The way $ T(x_1, \ldots , x_m) $ is read off from the non-crossing partition view of $ f $, we can say that the colors $ v_1 , \ldots , v_m$ (lying in $W_{\t{alt}} $) correspond to the partitions consisting of $ X_{w_i} $'s.
At this point, it is useful to work with a standard form representative of $ T $ (as described in \Cref{C*2OPA}); here, we do have a standard form, where the $ v_j $'s are connected straight up to the top of the external rectangle by strings without any local maxima or minima.
Let us look at the $ j $-th internal rectangle; suppose $ v_j \in W_\t{alt}$ is of $ (\vlon, -\vlon ) $-type.
By the action of tangles of the oriented planar algebra $ P^{Y_+} $ defined in \Cref{C*2OPA}, the unitary $ \alpha^*_{v_j}  $ appearing in the label $ \gamma_{v_j} x_j = \alpha^*_{v_j} \ (1_{g_{\vlon}} \otimes x_j \otimes 1_{g_{-\vlon}} ) \ R_\vlon $, slides straight up to the top of the external rectangle of $ T $.
Moreover, the $ g_\vlon $ and $ g_{-\vlon} $ at the two extremes of $ Y_{v_j} $ (which are also part of the external rectangle), are pair-partitioned by the $ R_\vlon $ appearing in $ \gamma_{v_j} x_j$.
This lets us to express $ P^{Y_+}_T (\gamma_{v_1} x_1 , \dots , \gamma_{v_m} x_m )$ as $ P^{g_+}_{S''} \otimes P^{X_+}_T (x_1,\ldots , x_m) $ where $ S'' $ is a pair-partitioning of the $ g_\pm $ appearing in $ Y_w $ and thus the proof is complete.
\end{proof}

\subsection{Free oriented extension of the Temperley-Lieb Planar algebra}\label{otl sec}
The foremost example of oriented extensions comes from the Temperley-Lieb planar algebra.
For $ \delta \geq 2 $, $TL^\delta$ will denote the subfactor planar algebra with modulus $ \delta $.

We consider the free oriented Temperley-Lieb planar algebra, $ \mcal{F}(TL^\delta) $.
For a word $ w $ with letters in $ \{+,-\} $, the vector space $\mcal{F}(TL^\delta)_w $ is the complex span of $ w $-tangles without any internal discs and loops, that is, oriented Temperley-Lieb diagrams.
The oriented planar tangles act on $ \mcal{F}(TL^\delta) $ exactly same way as in the ordinary $ TL^\delta$.
As in $ TL^\delta $, we have a double sequence of Jones-Wenzl idempotents $ f^+_n \in TL^\delta_{+n}= \mcal F(TL^\delta)_{(+-)^{2n}}$ and $ f^-_n \in TL^\delta_{-n}=\mcal{F}(TL^\delta)_{(-+)^{2n}}$ for $ n \geq 1 $.
When $ n=0 $, $f^+_0$ and $f^-_0 $ get identified in $ \mcal{F}(TL^\delta) $ which we denote by $ f_0 $.
The projection category of $ \mcal{F}(TL^\delta) $, $ \mcal K(\mcal C^{\mcal{F}(TL^\delta)}) $, is generated by the projection $ f^+_1$ and has $ f_0 $ as the tensor unit.
It will be interesting to look at the irreducible objects of $ \mcal C^\delta_{\t {free}} $.
For this, we use the MAS sub-word decomposition of words with letters in $ \{+,-\} $ (possibly starting and ending with the same sign).
Let $ v=v_1v_2\cdots v_n $ be the MAS sub-word decomposition of $ v $ where each $ v_i $ starts in $ \varepsilon_i \in \{+,-\} $.
If we set $ f_{vv^*} $ to be the projection, 
\[
\psfrag{a}{\hspace{-0.45em}$ f^{\vlon_1}_{\abs {v_1}} $}
\psfrag{b}{\hspace{-0.65em}$ f^{\vlon_2}_{\abs {v_2}} $}
\psfrag{c}{\hspace{-0.5em}$ f^{\vlon_n}_{\abs {v_n}} $}
\psfrag{1}{$ v_1 $}
\psfrag{2}{$ v_2 $}
\psfrag{n}{$ v_n $}
\psfrag{d}{$  \bigstar $ }
\raisebox{-1.9em}{\includegraphics[scale=0.3]{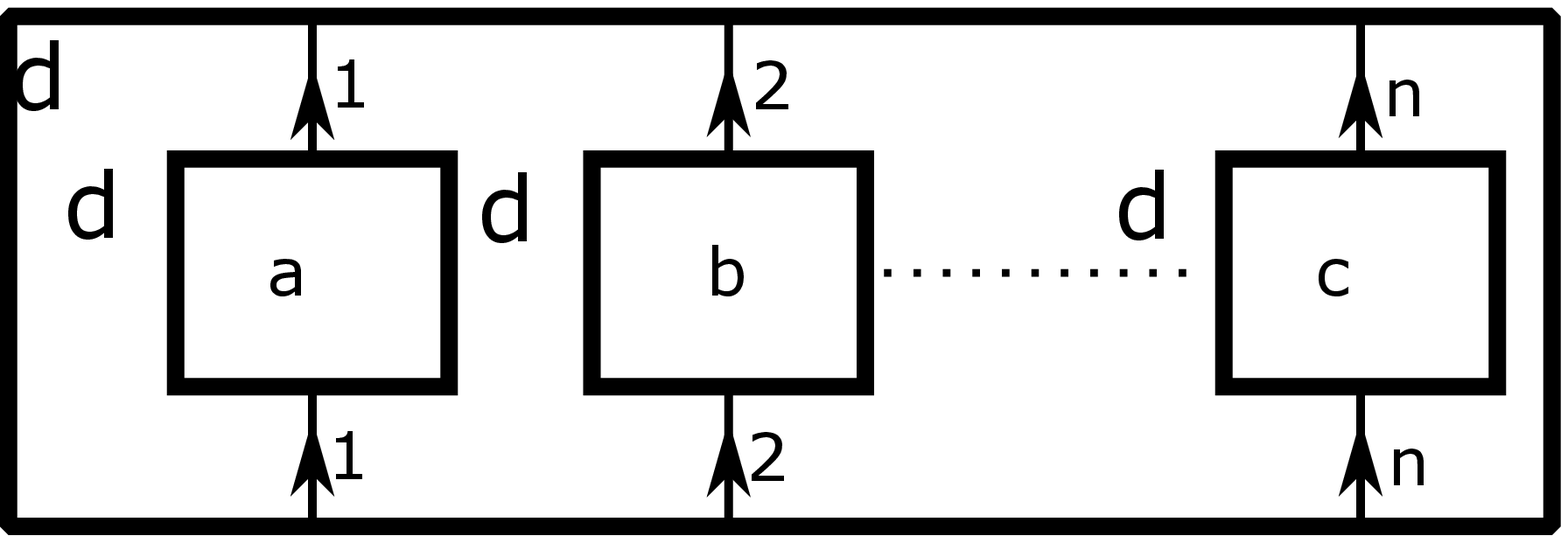}},\]
then, with little effort, one can prove that $ f_{vv^*} $ is a minimal projection of $ \mcal{F}(TL^\delta)_{vv^*} $ and thereby a simple object $ \mcal K(\mcal C^{\mcal{F}(TL^\delta)}) $.
We now need to see whether for all $ w $, every simple object in $ \mcal K(\mcal C^{\mcal{F}(TL^\delta)}) $ (same as a minimal projection in $ \mcal F(TL^\delta)_w $) is equivalent to $ f_0 $ or one of these $ f_{vv^*} $'s.
For projections to exist, we should necessarily have $ w=u\tilde{u} $.
Using the standard trick of `through strings' and `middle pattern analysis' (see \cite{BJ}), one can show that this is indeed the case.

From \cite{B1} and \cite{BRV}, one can see that the (co-)representation category of free unitary quantum group $A_u(F)  $ for $ F\geq 0 $ and $ \mcal F(TL^\delta) $ are equivalent.

One more $ TL^\delta $ are the so called \textit{unshaded Temperley-Lieb} denoted by $ USTL^\delta$.
Define $ USTL^\delta_w $ as the span of set of all non-crossing pairings of letters in $  w $ (irrespective of the signs, that is, pairing of like signs is allowed).
This automatically puts the restriction $ USTL^\delta_w $ is zero if length of $ w$ is odd.
Given any oriented tangle, removing all the directions and labels of the strings gives an unshaded TL tangle and hence can act on $ USTL^\delta $ (action of tangles with any of the discs having color of odd length is taken to be zero).
Note that, irrespective of the sequence of letters in words, if the length of two words are same, then the corresponding vector spaces are identical.
Clearly, $ USTL^\delta $ is a $ \{+\} $-oriented factor planar algebra for $ \delta\geq 2 $ and $ \mcal{F}(TL^\delta) $ sits inside it in a canonical way (as proved in Theorem \ref{free con any}).
Under this inclusion, the projection $ f_{vv^*} $ is no longer minimal if there is at least two MAS sub-words in $ v $.
In fact, the irreducibles in the projection category $ \mcal K(\mcal C^{USTL^\delta}) $ of $ USTL^\delta $, come from those $ f_{vv^*} $ 's for which all the letters in $ v $ are alternately signed.
Now, note that $ USTL^\delta_{++} $ and $ USTL^\delta_{--} $ are one dimensional; this shows $ f_{+-} $ and $ f_{-+} $ are isomorphic and thereby, $ f_{(+-)^k} $ and $ f_{(-+)^k} $ become isomorphic.
Hence simple objects of $ \mcal K(\mcal C^{USTL^\delta}) $ can be identified with $ \N\cup\{0\} $. 

This category can be realized as the representation category of the free orthogonal quantum groups $A_{o}(F)$, where $F$ is a matrix with $\operatorname{Tr}(F^{*}F)=\delta$ and $\overline{F}F=1$. There is another case, namely when $\overline{F}F=-1$, which corresponding to $Rep(SU_{q}(2))$ with $q+q^{-1}=\delta$. These planar algebras cannot actually be ``unshaded" since the generating object is not symmetrically self-dual, but nevertheless provide oriented extensions $TL^{\delta}$ (see \cite{B}).

We propose the following natural problem.

\begin{problem} Classify all oriented extension of the $TL^{\delta}$ for $\delta\ge 2$.
\end{problem}

We expect the corresponding problem for $\delta<2$ to actually be more difficult, and relate closely to the extension theory of fusion categories \cite{ENO}.
The free product of the ``even parts" of this subfactor planar algebra are very likely to have a large number of quotients, and each of these will likely have a large number of invertible bimodules, making classifications of extensions difficult.
However, in the case $\delta\ge 2$, we expect the number quotients of the even part to be manageable, and the number of invertible bimodules to be small, making this problem feasible.
\subsection{Hyperfinite constructions}\label{applications}

In this section, we make use of a result of Vaes about existence and uniqueness of subfactor standard invariants in the hyperfinite $\rm{II}_1$ factor to provide some more examples.
In \Cref{FOE}, we have seen that every subfactor planar algebra has a canonical oriented extension, namely the free one.
However, as described in this introduction, if we know that there exists a hyperfinite subfactor whose standard is given by the subfactor planar algebra (which we refer as \textit{hyperfinite realizable}), we can produce many oriented extensions.
We describe the procedure below.

Suppose $ N\subset M $ is an extremal, finite index subfactor such that there is an isomorphism $ \vphi :N \ra M $.
Consider the extremal bifinite $ N $-$ N $ bimodule $  \mcal H^\vphi $ given by: $ \mcal H^\vphi := L^2 (M) $ and $ n_1 \cdot \hat{x} \cdot n_2 :=\widehat{ \left[n_1 y \vphi (n_2)\right]} $\; for all $ x \in M $ and $ n_1,n_2 \in N $.
As discussed in \Cref{C*2OPA}, we can associate a singly generated oriented planar algebra, say $ OP^\vphi $, to the rigid $ C^* $-tensor category generated by $ \mcal H^\vphi $.
The shaded part of $ OP^\vphi $ indeed turns out to be isomorphic to the subfactor planar algebra $ P^{N\subset M} $ associated to $ N\subset M $; thereby, $ OP^\vphi $ becomes an oriented extension of $ P^{N\subset M} $.
To see this, one has to use the isomorphism between the grid of relative commutants and intertwiner spaces (as in \cite{JS}) and the decomposition $\vphantom{ \mcal H}^{\vphantom{ \vphi}}_N  \mcal H^\vphi_N \  \cong \ \vphantom{L^2(M)}_N L^2 (M)_M \ \us M \otimes \ \vphantom{ \mcal H}^{\vphantom{ \vphi}}_M  \mcal H^\vphi_N$ (where $ \vphantom{ \mcal H}^{\vphantom{ \vphi}}_M  \mcal H^\vphi_N $ is an invertible bimodule); a more explicit  isomorphism can be found in \cite{MB} or \cite[Theorem 5.2]{DGG}.
In other words, starting from a subfactor planar algebra $ P $ such that it is known that the $ P $ comes from a subfactor $ N \subset M $ where $ N $ and $ M $ are isomorphic, then every isomorphism between $ N $ and $ M $ gives rise to an oriented extension of $ P $.
In particular, if the subfactor planar algebra corresponds to a hyperfinite one, then one can easily obtain many of its oriented extensions by picking different isomorphisms from $N$ to $M$.

We next deal with the question whether the free oriented extension of a hyperfinite realizable subfactor planar algebra is hyperfinite realizable. For this, recall the following definitions from \cite{V} regarding freeness of two fusion subalgebras of bifinite bimodules over a $ \rm{II}_1 $ factor.

\begin{defn}[Vaes]\label{fusfreedef}
Let $ M $ be a $ \rm{II}_1 $-factor and $ \mcal F_1,\mcal F_2 $ be two fusion subalgebras of the fusion algebra of bifinite bimodules over $ M $ with basis $ \chi_1 $ and $ \chi_2 $ respectively.
Then, $ \mcal F_1 $ and $ \mcal F_2 $ are said to be \textit{free} if:
\begin{itemize}
	\item [(i)] every tensor product of non-trivial irreducible bimodules, with factors alternatingly from $ \chi_1 $ and $ \chi_2 $, is irreducible,
	\item [(ii)] two tensor products of non-trivial irreducible bimodules, with factors alternatingly from $ \chi_1 $ and $ \chi_2 $, are equivalent if and only if they are factor by factor equivalent.
\end{itemize}
\end{defn}
\begin{prop}
The free oriented extension of a hyperfinite realizable subfactor planar algebra is isomorphic to a oriented factor planar algebra singly generated  by an extremal bifinite bimodule over the hyperfinite $ \rm{II}_1 $-factor $ R $.
\end{prop}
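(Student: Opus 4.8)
The plan is to combine Theorem \ref{orientedrealizationivertible} with a \emph{free} realization of its two constituent categories over $R$. Since $P$ is hyperfinite realizable, fix a hyperfinite subfactor $N\subseteq M$ whose standard invariant is $P$. Both $N$ and $M$ are isomorphic to $R$, so I would choose an isomorphism $\vphi:N\ra M$ and form the oriented extension $Q:=OP^\vphi$ exactly as in the discussion preceding the statement. By that construction the projection category $\mcal K(\mcal C^Q)$ is realized as a full tensor subcategory of $\t{Bim}(R)$, tensor-generated by the extremal bifinite bimodule $X_+:=\mcal H^\vphi$; write $\chi_2$ for the associated set of isomorphism classes of simple bimodules. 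Next I would realize $\operatorname{Hilb}_{f.d.}(\Z)$ inside $\t{Bim}(R)$: choosing an action of $\Z$ on $R$ all of whose nontrivial powers are outer, the invertible bimodules $g_n:={}_R L^2(R)_{\theta^n}$ satisfy $g_m\otimes g_n\cong g_{m+n}$ and yield a fully faithful unitary tensor functor $F_+:\operatorname{Hilb}_{f.d.}(\Z)\ra \t{Bim}(R)$; let $\chi_1:=\{g_n\}_{n\in\Z}$. The essential input is the result of Vaes: the $\Z$-action can be chosen so that the fusion subalgebras $\chi_1$ and $\chi_2$ are \emph{free} in the sense of Definition \ref{fusfreedef}.

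Granting such a free realization, I would identify the tensor category generated by $\chi_1\cup\chi_2$ with the free product. Applying Theorem \ref{freeprodunilem} to $F_+$ and the inclusion $F_-:\mcal K(\mcal C^Q)\hookrightarrow \t{Bim}(R)$ (after passing to a strict model) produces a unitary tensor functor $\widetilde F:\operatorname{Hilb}_{f.d.}(\Z)\ast \mcal K(\mcal C^Q)\ra \t{Bim}(R)$ whose restrictions to the two factors are unitarily monoidally isomorphic to $F_\pm$. By Proposition \ref{freeprod} the simple objects of the free product are precisely the alternating words of nontrivial simples from $\chi_1$ and $\chi_2$, and conditions (i)--(ii) of Definition \ref{fusfreedef} say exactly that $\widetilde F$ carries these to pairwise inequivalent irreducible bimodules. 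Hence $\widetilde F$ is fully faithful, so its image is a full tensor subcategory of $\t{Bim}(R)$ equivalent to $\operatorname{Hilb}_{f.d.}(\Z)\ast\mcal K(\mcal C^Q)$.

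Finally I would restrict along Theorem \ref{orientedrealizationivertible}, which exhibits $\mcal K(\mcal C^{\mcal F(P)})$ as the full tensor subcategory of $\operatorname{Hilb}_{f.d.}(\Z)\ast\mcal K(\mcal C^Q)$ tensor-generated by $Y_+=g_+\otimes X_+\otimes g_+$ with $g_+=1\in\Z$. Under the fully faithful $\widetilde F$ this becomes the full subcategory of $\t{Bim}(R)$ tensor-generated by the single bimodule $Y:={}_R L^2(R)_\theta\otimes_R \mcal H^\vphi\otimes_R {}_R L^2(R)_\theta$, which is extremal and bifinite, being a tensor product of extremal bifinite bimodules (the $g_1$ are invertible). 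By the correspondence of Remark \ref{OPA-C*-correspondence}, the oriented factor planar algebra $P^{Y}$ associated to this category together with the generator $Y$ is isomorphic to $\mcal F(P)$, exhibiting $\mcal F(P)$ as singly generated by an extremal bifinite bimodule over $R$.

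The step I expect to be the main obstacle is the middle one, namely securing via Vaes' theorem a realization in which $\chi_1$ and $\chi_2$ are genuinely free: everything downstream, in particular the full faithfulness of $\widetilde F$ and hence the identification with the free product, rests entirely on the two freeness conditions of Definition \ref{fusfreedef} lining up with the classification of simple objects in Proposition \ref{freeprod}. Once the free realization is in hand, the remaining verifications (strictification, checking $\widetilde F$ is monoidal and fully faithful, and the bookkeeping of the tensor-generator $Y$) are routine.
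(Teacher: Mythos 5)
Your proposal is correct and follows essentially the same route as the paper: realize the projection category of an oriented extension $Q$ inside $\t{Bim}_{\t{ext}}(R)$, adjoin a copy of $\operatorname{Hilb}_{f.d.}(\Z)$ via an outer $\Z$-action made free relative to it by Vaes' theorem, identify the generated subcategory with the free product using Theorem \ref{freeprodunilem} together with Definition \ref{fusfreedef} and Proposition \ref{freeprod}, and conclude by Theorem \ref{orientedrealizationivertible}. The only cosmetic difference is that the paper obtains freeness by conjugating the $\Z$-action by the automorphism $\theta$ supplied by Vaes (i.e.\ replacing $\kappa$ by $\t{Ad}\theta\circ\kappa$) rather than asserting the action can be chosen free outright, and it leaves the final bookkeeping of the generator $Y_+$ implicit where you spell it out.
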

\begin{proof}
Suppose $ \t{Bim}_{\t{ext}}  (R)$ denotes the category of extremal bifinite bimodules over $ R $.
Since subfactor planar algebras are assumed to be spherical, without loss of generality, we may start with a subfactor planar algebra $ P $ associated to $\vphantom{\mcal H}_R \mcal H_R \in \t {Obj} (\t{Bim}_{\t{ext}}  (R))$.
Let $ Q $ be the singly generated oriented factor planar algebra associated to $\vphantom{\mcal H}_R \mcal H_R$, and $ \mcal C $ be the full subcategory of $ \t{Bim}_{\t{ext}}  (R) $, tensor-generated by $\vphantom{\mcal H}_R \mcal H_R$.
From \Cref{OPA-C*-correspondence}, $ \mcal C $ is monoidally equivalent to the projection category $ \mcal K(\mcal C^Q) $ (associated to $ Q $) as C*-categories.

Consider an outer action $ \kappa $ of $ \Z $ on $ R $.
For $ n \in \Z $, let $\vphantom{\mcal K_n}_R {\mcal K_n}_R $ be the invertible bimodule $ L^2 (\kappa_n) $, that is, $ \mcal K_n \coloneqq L^2 (R) $ on which the left (resp., right) action of $ R $ is the usual one (resp., twisted by $ \kappa_n $).
Suppose $ \mcal D $ denotes the subcategory of $ \t{Bim}_{\t{ext}}  (R) $, whose irreducible sub-modules are isomorphic to $ \mcal K_n $'s.
Clearly, $ \mcal D $ is equivalent to $ \operatorname{Hilb}_{f.d.}(\Z )$.

Let $ \mcal G_{\mcal C} $ and $ \mcal G_{\mcal D}$ be the fusion sub-algebras of the fusion algebra of $ \t{Bim}_{\t{ext}}  (R) $ corresponding to $ \mcal C $ and $ \mcal D $ respectively.
Now, \cite[Theorem 5.1]{V} tells us that there exists $ \theta \in \t{Aut} (R)$ such that the fusion sub-algebras corresponding to $ \mcal C $ and $ L^2 (\theta) \us R \otimes \mcal D \us R \otimes \ol{L^2(\theta)}$ are free.
So, replacing the outer action $ \kappa $ by $ \t {Ad}\theta \circ \kappa $, we may assume that $ \mcal G_{\mcal{C}} $ and $ \mcal G_{\mcal{D}} $ are free.
We claim that the full subcategory $ \mcal E $ of $  \t{Bim}_{\t{ext}}  (R) $ tensor-generated by $ \mcal C $ and $ \mcal D $ is monoidally equivalent to $ \mcal C \ast \mcal D $.

Applying \Cref{freeprodunilem}, we get a monoidal C*-functor $ A:\mcal C \ast \mcal D \ra \mcal E $ such that the restriction of $ A $ to $ \mcal C $ (resp., $ \mcal D $) is equivalent to the containment of $ \mcal C $ (resp., $ \mcal D $) in $ \mcal E $ as a full subcategory.
Since $ \mcal C \ast \mcal D $ and $ \mcal E $ are semi-simple, rigid C*-tensor categories, $ A $ being a monoidal C*-functor, must be faithful.
\Cref{fusfreedef} (i) and \Cref{freeprod} implies that $A $ must send simple objects to simple ones; \Cref{fusfreedef} (ii) implies that $ A$ must induce a bijection on isomorphism classes of simple objects. Any monoidal C*-functor between semi-simple rigid C*-tensor categories which induces a bijection between isomorphism classes of simple objects is an equivalence.

Thus we have a copy of $\mcal{C}*\operatorname{Hilb}_{f.d.}(\Z )$ as a full subcategory of $\t{Bim}_{\t{ext}}(R)$, so the result follows by Theorem \ref{orientedrealizationivertible}.
\end{proof}

Given a subfactor planar algebra, it would be interesting to find out all possible oriented extensions (up to isomorphism).

\bibliographystyle{alpha}

\end{document}